\newcolumntype{M}[1]{>{\centering\arraybackslash}m{#1}}
\newcolumntype{N}{@{}m{0pt}@{}}
\newtheorem{thm}{Theorem}[section]
\newtheorem{cor}[thm]{Corollary}
\newtheorem{prop}[thm]{Proposition}
\newtheorem{lem}[thm]{Lemma}
\theoremstyle{definition}
\newtheorem{defn}[thm]{Definition}
\newtheorem{rem}[thm]{Remark}
\theoremstyle{remark}
\numberwithin{equation}{section}
\renewcommand{\epsilon}{\varepsilon}
\renewcommand{\phi}{\varphi}
\title[Antonios Zitridis]{The Master Equation in a Bounded Domain under Invariance Conditions for the State Space}
\author{Antonios Zitridis}
\address{Department of Mathematics, University of Chicago, Chicago, Illinois, 60637, USA.}
\email[]{zitridisa@uchicago.edu}
\date{\today}
\begin{document}

\begin{abstract}
In this paper, we study the well-posedness (existence and uniqueness) of the Master Equation of Mean Field Games under invariance-type conditions, otherwise known as viability conditions for the controlled dynamics. The interior regularity of the solutions of the associated Mean Field Game system and its linearized version, which plays a crucial role in the proof of the existence, is obtained by the global regularity of the corresponding solutions in the Neumann boundary conditions case. Finally, we prove that the solution of the related Nash system converges to the solution of the Master Equation.
\end{abstract}

\maketitle

\tableofcontents

\section{Introduction}

\noindent

\noindent
The Mean Field Game (MFG for short) theory was introduced by J.-M. Lasry and P.-L Lions [10, 11, 12], and in a particular case by Huang, Caines and Malhame [13], in order to describe Nash equilibria in differential games with infinitely many (small and undistinguishable) players, called ``infinitesimal'' players. Its name comes from the analogy with the mean field models in mathematical physics, which analyzes the behaviour of many identical particles.

\vspace{2mm}
\noindent
The macroscopic description used in MFG theory leads to the study of coupled systems of PDEs, consisting of a backward Hamilton-Jacobi-Bellman equation satisfied by the single agent's value function $u$ and a forward Kolmogorov-Fokker-Planck equation satisfied by the distribution law $m$ of the population. The simplest form of this system in a bounded domain $\Omega\subset \mathbb{R}^d$ reads
$$\bm{(MFG)}: \begin{cases}
-\partial_t u-\text{tr}(a(x)D^2u)+H(x,Du)=F(x,m), \text{ in } (0,T)\times\Omega\;,\\
\partial_t m-\sum_{i,j}\partial_{i,j}^2(a_{ij}(x)m)-\text{div}(mH_p(x,Du))=0,\text{ in }(0,T)\times\Omega\;,\\
m(0)=m_0,\quad u(T,x)=G(x,m(T)),
\end{cases}$$
with appropriate boundary conditions to achieve well-posedness, where $m_0$ is the initial state of the population, $H(x,p)$ is the Hamiltonian of the system, $F$ and $G$ are the running cost and the final cost and $a=\sigma \sigma^T$ is uniformly elliptic, where $\sigma$ is the diffusion matrix of the stochastic dynamics of the ``infinitesimal'' player :
$$\bm{(SDE)}: \; dX_t=-H_p(x,Du)dt+\sqrt{2}\sigma (X_t) dB_t, \quad X_0=x\in \Omega.$$

\vspace{2mm}

\noindent
Although the MFG system has been widely studied since its introduction, it has become increasingly clear that it is not sufficient to take into account the entire complexity of games with infinitely many players. This led Lasry and Lions [8] to introduce the so-called “Master Equation” (ME for short), which directly describes the limit of the Nash equilibrium for the differential game with $N$ players as $N\rightarrow +\infty$. 
In particular, if $(u,m)$ is the unique (under certain assumptions) solution to the MFG system with initial condition $m(t_0)=m_0$, then we define the function 
\begin{equation} \label{ME}
U: [0,T]\times \Omega\times \mathcal{P}(\Omega)\rightarrow \mathbb{R}\text{ by } U(t_0,x,m_0):=u(t_0,x),
\end{equation}
where $\mathcal{P}(\Omega)$ is the set of Borel probability measures on $\Omega$. A formal computation yields the \textbf{ME}, which is the equation satisfied by $U$:

$$\bm{(ME)}:\begin{cases}
\partial _tU(t,x,m)=-\text{tr}(a(x)D^2_xU(t,x,m))+H(x,D_xU(t,x,m))\\\\
-\int_{\Omega}\text{tr}(a(y)D_yD_mU(t,x,m,y))dm(y)\\
\\
+\int_{\Omega}D_mU(t,x,m,y)\cdot H_p(y,D_xU(t,y,m))dm(y)-F(x,m)\;,\\
\quad\quad\quad\quad\quad\quad\quad\quad\quad\quad\quad\quad\quad\quad\quad\quad\quad\quad\text{ in }[t_0,T]\times \Omega\times \mathcal{P}(\Omega),\\
 \\
U(T,x,m)=G(x,m)\quad\text{ in }\Omega\times\mathcal{P}(\Omega)\;.
\end{cases}
$$
Of course, one needs to assume the appropriate boundary conditions to achieve well-posedness.
\vspace{3mm}

\noindent
So far, most of the literature related to the \textbf{ME} considers the case where the state variable $x$ belongs to the flat torus (i.e $\Omega=\mathbb{T}^d$) ([5]) or in $\mathbb{R}^d$. Recently, Ricciardi [2] and Di Persio, Garbelli, Ricciardi [9] studied the problem in an arbitrary bounded domain $\Omega$ with Neumann conditions and Dirichlet conditions, respectively. A common characteristic of these two cases is that the stochastic trajectories of the ``infinitesimal'' players, described by \textbf{SDE}, do not exit the domain $\Omega$ but may reach the boundary\footnote{The trajectories reflect, if we have Neumann conditions, and stay on the boundary, if we have Dirichlet conditions.}. 

\vspace{3mm}

\noindent
Another significant case when the dynamical state remains inside the domain, but, this time, does not reach the boundary, is if one considers a \textit{state constraint} condition. An example of such a condition was studied in [17] by Sardarli, where the value function $u$ blows up near the boundary. In this paper, we will assume the invariance condition used in [1] by Porretta and Ricciardi:
\begin{equation} 
\text{tr}(a(x)D^2d(x))-H_p(x,p)\cdot Dd(x)\geq \frac{a(x)Dd(x)\cdot Dd(x)}{d(x)}-Cd(x)
\end{equation}
for some constant $C>0$, for all $p\in\mathbb{R}^d$ and for all $x$ in a neighbourhood of the boundary $\partial \Omega$. It is proved in Proposition 2.5 of [1] that this is a condition in which the drift-diffusion terms are such that, almost surely, the stochastic trajectory of the ``infinitesimal'' player (\textbf{SDE}) does not hit the boundary and, hence, stays inside the domain\footnote{In the control community, sometimes this property is referred to as the \textit{viability of the state space.}}. Furthermore, in Theorem 5.1 ([1]), it is proved that \textbf{(MFG)} has a unique solution if $(1.2)$ holds near the boundary.
\vspace{3mm}

\noindent 
The first aim of this paper is to analyze the well-posedness of the \textbf{ME} when $(1.2)$ holds near the boundary. In particular, we will show that, under specific assumptions, the solution of \textbf{(MFG)} obtained in [1] is regular (Proposition \ref{MFGL1}) and that $U$ is the unique classical solution of the \textbf{ME}.
\vspace{2mm}

\noindent
Returning to the $N$-player differential game, where we assume  that every player controls his/her own state and interacts only through their cost function, it is known that at a Nash equilibrium the value functions of the players, $v_i^N$, $1\leq i\leq N$ satisfy the \textit{Nash system}
\begin{equation} \label{NS}
    \begin{cases}
-\partial_t v_i^{N,\epsilon}-\sum_j \text{tr}(a(x_j)D^2_{x_jx_j}v_i^{N,\epsilon}(t,\bm{x}))+H(x_i,D_{x_i}v_i^{N,\epsilon}(t,\bm{x}))\\
\quad\quad\quad\quad+\sum_{j\neq i}H_p(x_j,D_{x_j}v_j^N(t,\bm{x}))\cdot D_{x_j}v_i^{N,\epsilon}(t,\bm{x})=F(x_i,m_{\bm{x}}^{N,i}),\quad\text{in }[0,T]\times \Omega^N,\\
v_i^{N,\epsilon}(T,\bm{x})=G(x_i,m_{\bm{x}}^{N,i})\quad\text{ in }\Omega^N,
\end{cases}
\end{equation}
where
$$m_{\bm{x}}^{N,i}=\frac{1}{N-1}\sum_{j\neq i}\delta_{x_j}$$
is the empirical distribution of the players $j\neq i$ and where the functions $v_i^N,\;\;i=1,2,...,N$ satisfy an appropriate boundary condition (Dirichlet or Neumann) or there is a state constraint condition. In this game, the set of ``optimal trajectories'' solves a system of $N$ coupled stochastic differential equations:
\begin{equation} \label{system1}
  \begin{cases} 
    dY^i_t=-H_p(Y^i_t,Dv_i^N(t,\bm{Y}_t))dt+\sqrt{2}\sigma(Y^i_t)dB^i_t,\;\;t\in[0,T],\\
    Y_0^i=Z_i,
    \end{cases}\text{, }1\leq i\leq N\;,
\end{equation}
where $\bm{Z}=(Z_i)_i$ are i.i.d of fixed law. 
\vspace{2mm}

\noindent
The structure of the $N$-player game when $N>>1$ is very complex and in that case it is natural to study the asymptotic behaviour of (\ref{NS}) as $N\rightarrow +\infty$. This question has been addressed on the flat torus (i.e $\Omega=\mathbb{T}^d$) by P. Cardaliaguet, F. Delarue, J.-M Lasry, P.-L Lions in [5], and, more recently, on arbitrary bounded domains under Neumann conditions (by Ricciardi in [16]) and Dirichlet conditions (by   Di Persio, Garbelli and Ricciardi in [9]). 
\vspace{2mm}

\noindent
The second aim of this paper is to study the asymptotic behaviour of the differential game on arbitrary bounded domain $\Omega$; assuming that $(1.2)$ holds near the boundary. The main result, as in [5], [16] and [9], says that the $(v_i^N)_{i\in \{1,2,...,N\}}$ ``converge'' to the solution of the \textbf{ME} as $N\rightarrow +\infty$. This result, conjectured by Lasry and Lions [12], connects the Nash system, in which the players observe each other, with the limit system, in which the players observe just the distribution of the population. A way to express the convergence is given in Theorem 7.4. In particular, if
\begin{equation}
u_i^N(t,\bm{x}):=U(t,x_i,m_{\bm{x}}^{N,i})\;,
\end{equation}
then $|u_i^N-v_i^N|\rightarrow 0$ in some suitable norm. Finally, one can describe the convergence in terms of the trajectories. Namely, the solution of (\ref{system1}) is ``close'' to the solution of
\begin{equation}
\begin{cases} 
    dX^i_t=-H_p(X^i_t,Du_i^N(t,\bm{X}_t))dt+\sqrt{2}\sigma(X^i_t)dB^i_t,\;\;t\in[0,T],\\
    X_0^i=Z_i,
    \end{cases}\text{, }1\leq i\leq N    .
\end{equation}
The proofs of both convergence results rely heavily on methods from [5], [9] and [16].

\vspace{7mm}

\noindent
\textbf{Main results.} Our main results summarize as follows

\begin{thm}\label{MEsol}
Suppose Hypotheses 2.3 are satisfied. Then, the function $U$ from (\ref{ME}) is a classical solution of the \textbf{ME}.
\end{thm}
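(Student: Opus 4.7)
The plan is to follow the strategy pioneered in \cite{5} and adapted to bounded domains in \cite{2},\cite{16},\cite{9}: establish enough regularity for the MFG system and its linearization with respect to the initial measure, then exploit the semi-group/flow property of MFG solutions to recognize the ME as an infinitesimal identity obtained by differentiating along the flow of measures.

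First, I would use the interior regularity result, Proposition \ref{MFGL1}, to guarantee that for every initial datum $(t_0,m_0)$ the MFG solution $(u,m)$ is smooth in the spatial interior of $\Omega$ with the needed space-time regularity. This immediately gives smoothness of $U$ in the $x$-variable via $D_x U(t_0,x,m_0) = D_x u(t_0,x)$ and $D_x^2 U(t_0,x,m_0) = D_x^2 u(t_0,x)$. Next, to obtain the measure derivatives $\frac{\delta U}{\delta m}$, $D_m U$ and $D_y D_m U$, I would linearize the MFG system around $(u,m)$ by perturbing the initial measure $m_0$ in a direction $\mu_0$, producing a coupled backward-forward linear system for $(v,\rho)$. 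By arguing exactly as for the nonlinear system, well-posedness and interior regularity of this linearized system under the invariance condition (1.2) should be deduced from global regularity of a corresponding linearized Neumann problem on a slightly enlarged domain, yielding $v(t_0,\cdot) = \int \frac{\delta U}{\delta m}(t_0,\cdot,m_0,y)\,d\mu_0(y)$ with enough smoothness in both $x$ and $y$.

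With regularity in hand, I would invoke the uniqueness of the MFG system (Theorem 5.1 of \cite{1}) to get the consistency property: for any $s\in[t_0,T]$ the restriction $(u|_{[s,T]}, m|_{[s,T]})$ is the unique MFG solution with initial datum $m(s)$ at time $s$, whence
\begin{equation*}
U(s,x,m(s)) \;=\; u(s,x), \qquad (s,x) \in [t_0,T]\times\Omega.
\end{equation*}
Differentiating this identity in $s$, using the HJB equation to substitute for $\partial_s u$, and rewriting
\begin{equation*}
\frac{d}{ds}\int_{\Omega}\!\frac{\delta U}{\delta m}(s,x,m(s),y)\,dm(s,y)
\end{equation*}
by means of the Fokker-Planck equation and integration by parts against $D_y$ and $D_y^2$, converts the time derivative of $m(s)$ into precisely the two measure-integral terms of the ME. Evaluating at $s=t_0$ (which is arbitrary in $[0,T]$) and using $U(T,x,m)=u(T,x)=G(x,m)$ for the terminal condition completes the verification that $U$ solves the ME classically.

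The main technical obstacle is producing the joint regularity of $D_m U$ and $D_y D_m U$ in $(x,y,m)$ that is needed to make the integration by parts against $\partial_s m$ rigorous. The invariance condition (1.2) does not provide any a priori control of the linearized solution up to $\partial\Omega$, so the work sits in transferring global regularity from the auxiliary Neumann problem on a larger domain down to interior regularity for our problem, together with Lipschitz (or Hölder) dependence of the linearized solution on the base measure $m\in\mathcal{P}(\Omega)$. Once this linearized theory is in place, the differentiation-along-the-flow step is a direct computation.
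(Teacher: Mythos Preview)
Your proposal is correct and follows essentially the same route as the paper: establish regularity of $(u,m)$ and of the linearized system (this is Sections~4--5), then use the flow identity $U(s,x,m(s))=u(s,x)$ and differentiate in $s$, converting the measure-derivative term via the Fokker--Planck equation.

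Two points where the paper is slightly more explicit than your outline, and which you should make precise. First, the paper does not differentiate $U(s,x,m(s))$ directly; it writes the difference quotient as
\[
\frac{U(t_0+h,x,m_0)-U(t_0+h,x,m(t_0+h))}{h}+\frac{U(t_0+h,x,m(t_0+h))-U(t_0,x,m_0)}{h},
\]
and to handle the first piece it needs $m\in C^{1+\frac{\alpha}{2},2+\alpha}$ so that $m(t_0+h)-m_0=\int_{t_0}^{t_0+h}\partial_t m\,dt$ makes classical sense. This is only available when $m_0$ is smooth with compact support (Theorem~\ref{MFG sol}), so the argument is carried out for such $m_0$ first and then extended by density. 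Second, your phrase ``integration by parts against $D_y$ and $D_y^2$'' suggests a literal spatial integration by parts, which would produce boundary terms you have no control over under (1.2). The paper instead proves a dedicated lemma (Lemma~\ref{int by parts}) extracting the identity
\[
\int_{\Omega}m_0\bigl(-\text{tr}(a(y)D^2\varphi)+\alpha\cdot D\varphi\bigr)\,dy=\int_{\Omega}\partial_t m(t_0)\,\varphi(t_0)\,dy
\]
directly from the \emph{weak formulation} of the Fokker--Planck equation (Definition~4.1), which already encodes the invariance and thus bypasses boundary terms altogether. With these two refinements your plan coincides with the paper's proof.
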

\vspace{3mm}

\begin{thm}
Suppose Hypotheses 2.3 are satisfied. Then, $U$ is the unique classical solution of the \textbf{ME}.
\end{thm}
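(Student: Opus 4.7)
The plan is to reduce uniqueness of classical solutions of the \textbf{ME} to the uniqueness result for the \textbf{MFG} system established in Theorem 5.1 of [1]. Given two classical solutions $U_1, U_2$ of the \textbf{ME}, I would show that each $U_i$, when evaluated along its own associated measure flow, yields a classical solution of the \textbf{MFG} system with the same initial data; the uniqueness of \textbf{MFG} then forces $U_1 = U_2$.

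Concretely, fix any $(t_0, m_0) \in [0,T) \times \mathcal{P}(\Omega)$. For $i \in \{1,2\}$, I would construct a measure flow $m_i \in C([t_0, T]; \mathcal{P}(\Omega))$ solving the forward Kolmogorov-Fokker-Planck equation
\begin{equation*}
\partial_t m_i - \sum_{j,k} \partial^2_{jk}\bigl(a_{jk}(x)\, m_i\bigr) - \mathrm{div}\!\left(m_i\, H_p(x, D_x U_i(t,x,m_i(t)))\right) = 0, \quad m_i(t_0) = m_0,
\end{equation*}
compatibly with the invariance condition $(1.2)$, and then set $u_i(t,x) := U_i(t,x,m_i(t))$. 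Differentiating $u_i$ in $t$ via the chain rule for functions of measures and substituting the \textbf{ME} satisfied by $U_i$, the two integral terms involving $D_m U_i$ in the \textbf{ME} cancel exactly against the contribution coming from $\partial_t m_i$ (through the weak form of the Fokker-Planck equation). What remains is precisely the backward Hamilton-Jacobi-Bellman equation of \textbf{(MFG)}. Hence $(u_i, m_i)$ is a classical solution of \textbf{(MFG)} with $m(t_0) = m_0$. Invoking Theorem 5.1 of [1], $(u_1,m_1) = (u_2,m_2)$; evaluating at $t = t_0$ yields $U_1(t_0,x,m_0) = U_2(t_0,x,m_0)$ for all $x \in \Omega$, and the arbitrariness of $(t_0,m_0)$ closes the argument.

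The main obstacle lies in justifying this formal scheme rigorously within the invariance framework. One needs to verify that, for a classical solution $U_i$ of the \textbf{ME}, the drift $-H_p(\cdot, D_x U_i(t,\cdot,m_i(t)))$ is regular enough to admit a unique Fokker-Planck flow that respects $(1.2)$ so that $m_i(t)$ stays inside $\Omega$ without reaching $\partial \Omega$; and, simultaneously, that $U_i$ possesses enough joint regularity in $(t,x,m)$ --- in particular, an $L$-derivative $D_m U_i$ with the continuity needed to apply the chain rule along $t \mapsto m_i(t)$. Both requirements are exactly the regularity already produced by the construction behind Theorem \ref{MEsol}, so one can run the existence scheme in reverse: take any classical $U_i$, feed it into the linearized system / MFG coupling, and invoke the interior regularity transferred from the Neumann case (Proposition \ref{MFGL1}) to make every step licit. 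Once this technical verification is in place, no monotonicity argument for $U$ itself is required, as the uniqueness is inherited from \textbf{(MFG)}.
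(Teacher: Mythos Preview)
Your approach is exactly the paper's: reduce uniqueness of the \textbf{ME} to uniqueness of \textbf{(MFG)} by constructing the Fokker--Planck flow driven by each candidate solution, setting $u(t,x)=V(t,x,m(t))$, and applying the chain rule so that the $D_mV$ terms cancel against the weak form of the Fokker--Planck equation. The paper resolves the technical obstacle you flag by first restricting to smooth $m_0$ with compact support (so that Lemma~\ref{General FP} yields a \emph{classical} flow $\tilde m\in C^{1+\frac{\alpha}{2},2+\alpha}$, making the integration-by-parts formula of Lemma~\ref{int by parts} applicable in the chain-rule step), and then concludes for general $m_0\in\mathcal P(\Omega)$ by density; uniqueness of \textbf{(MFG)} is invoked via Lemma~\ref{stability estimates} rather than Theorem~5.1 of [1].
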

\vspace{3mm}

\begin{thm}
Assume Hypotheses 2.3. Let 
$$m_{\bm{x}}^{N}=\frac{1}{N}\sum_{i=1}^N\delta_{x_i}$$
and
$$w_i^N(t,x_i,m):=\int_{\Omega^{N-1}}v_i^N(t,\bm{x})\prod_{j\neq i}m_0(dx_j).$$
Then,
$$\sup_{t\in [0,T],i\in\{1,2,...,N\}}|v_i^N(t,\bm{x})-U(t,x_i,m_{\bm{x}}^N)|\leq \frac{C}{N}$$
and
$$||w_i^N(t,\cdot, m)-U(t,\cdot,m)||_{L^1(m)}\leq C\omega_N,$$
where
$$\omega_N=
\begin{cases}
CN^{-\frac{1}{d}} &,\text{ if }d\geq 3,\\
CN^{-\frac{1}{2}}\log N &,\text{ if }d=2,\\
CN^{-\frac{1}{2}} &,\text{ if }d=1.
\end{cases}$$
\end{thm}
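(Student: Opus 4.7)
The plan is to follow the probabilistic strategy of [5], [16] and [9], adapted to the invariance setting in which condition $(1.2)$ guarantees that the trajectories governed by \textbf{(SDE)} never hit $\partial\Omega$, so that boundary terms play no role in the It\^o calculus. Since Theorem \ref{MEsol} furnishes a classical solution $U$ of \textbf{(ME)}, the natural candidate for approximating $v_i^N$ is the function $u_i^N(t,\bm{x}):=U(t,x_i,m_{\bm{x}}^{N,i})$, and the proof proceeds by first showing that $u_i^N$ satisfies (\ref{NS}) up to a uniform error of order $1/N$, and then by a comparison argument along the Nash trajectories.

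The first step is to compute the action of the $N$-particle operator appearing in (\ref{NS}) on $u_i^N$. Differentiating in time and in each $x_j$ via the chain rule for $D_mU$ acting on the empirical measure $m_{\bm{x}}^{N,i}$, one recovers, up to a discretisation error, the integrals $\int_{\Omega}\text{tr}(a(y)D_yD_mU)\,dm$ and $\int_{\Omega}D_mU\cdot H_p(y,D_xU)\,dm$ appearing in \textbf{(ME)}. The remaining error splits into (i) higher-order terms involving $D^2_{mm}U$ that scale like $1/N$ thanks to the boundedness of the second-order measure derivative, and (ii) a Lipschitz-type defect coming from evaluating $H_p$ at $D_{x_j}v_j^N$ in place of $D_xU(t,x_j,m_{\bm{x}}^{N,j})$, which is absorbed by the comparison in the next step. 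The regularity estimates on $D_mU$, $D_yD_mU$ and $D^2_{mm}U$ needed here come from the theory for the linearised \textbf{(MFG)} system underpinning Theorem \ref{MEsol}.

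Next I would compare $u_i^N$ with $v_i^N$ by applying It\^o's formula to $u_i^N(t,\bm{Y}_t)-v_i^N(t,\bm{Y}_t)$ along the Nash trajectories (\ref{system1}); by Proposition~2.5 of [1] together with condition $(1.2)$, these trajectories remain in the interior almost surely, so no boundary contribution appears. The Lipschitz continuity of $H_p$ combined with a Gronwall-type argument propagates the $O(1/N)$ source from Step~1 into $\sup_{t,i}\|u_i^N-v_i^N\|_{\infty}\leq C/N$. Combining this with the elementary bound $|U(t,x_i,m_{\bm{x}}^{N,i})-U(t,x_i,m_{\bm{x}}^{N})|\leq C/N$, which is a direct consequence of the Lipschitz continuity of $U$ in $m$ for the Wasserstein distance $W_1$, yields the first stated estimate.

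For the $L^1$ bound on $w_i^N-U$, I average the first inequality against the product measure $m_0^{\otimes N}$: since the sample points $(x_j)_{j\neq i}$ are then i.i.d.\ of law $m_0$, the empirical measure $m_{\bm{x}}^{N,i}$ concentrates on $m_0$ with $\E[W_1(m_{\bm{x}}^{N,i},m_0)]\leq C\omega_N$ by the Fournier--Guillin rate, and combining this with the Lipschitz continuity of $U$ in $m$ and the first estimate produces the claimed bound. The hardest point is Step~1, namely verifying that $u_i^N$ solves (\ref{NS}) up to $O(1/N)$; this requires precise, quantitative regularity of $D_mU$ and $D_yD_mU$ with control right up to $\partial\Omega$, a subtle matter handled via the interior regularity results for the linearised \textbf{(MFG)} established earlier in the paper under the invariance hypothesis.
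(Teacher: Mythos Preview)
Your proposal is correct and follows essentially the same route as the paper: show that $u_i^N$ is an almost-solution of the Nash system with $O(1/N)$ defect (paper's Theorem~7.3, via Proposition~7.2), compare $u_i^N$ and $v_i^N$ along the Nash trajectories by applying It\^o's formula to $(u_i^N-v_i^N)^2$ and closing with Gr\"onwall after summing over $i$ (paper's Theorem~7.4), and then average against $m_0^{\otimes N}$ and invoke the Fournier--Guillin rate, exactly as in [5], Theorem~2.4.8. One small caveat: the paper does \emph{not} establish existence or boundedness of $D^2_{mm}U$; it only proves that $\frac{\delta U}{\delta m}$ is Lipschitz in $m$ (Theorem~\ref{Lip dU}), and this Lipschitz bound is what controls the $O(1/N^2)$ error in the third identity of Proposition~7.2, so you should phrase your Step~1 in terms of that Lipschitz continuity rather than a second measure derivative.
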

\vspace{3mm}

\begin{thm}
Assume Hypotheses 2.3 and, furthermore, that $\sigma$ is Lipschitz. Then, for any $1\leq i\leq N$, we have
$$\sup_{t\in [0,N]}\mathbb{E}[|X_t^i-Y_t^i|^2]\leq\frac{C}{N^2}\;.$$
\end{thm}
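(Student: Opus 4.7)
The plan is to run the synchronous coupling argument of [5], adapted to the viability setting. Since the SDEs (1.6) and (1.7) are driven by the same Brownian motions $(B^i)$ and share the initial data $(Z_i)$, Itô's formula applied to $|X^i_t - Y^i_t|^2$, followed by taking expectations so that the stochastic integral vanishes, yields
\begin{equation*}
\frac{d}{dt}\E|X^i_t - Y^i_t|^2 \leq 2\,\E\bigl[(X^i_t - Y^i_t)\cdot \Delta^i_t\bigr] + 4\,\E|\sigma(X^i_t) - \sigma(Y^i_t)|^2,
\end{equation*}
where $\Delta^i_t := H_p(Y^i_t, Dv_i^N(t,\bm{Y}_t)) - H_p(X^i_t, Du_i^N(t,\bm{X}_t))$. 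The Lipschitz hypothesis on $\sigma$ absorbs the diffusion term into $C\,\E|X^i_t - Y^i_t|^2$, so the crux is controlling $\Delta^i_t$.

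I would then split $\Delta^i_t = A^i_t + B^i_t$ into a \emph{displacement} part $A^i_t := H_p(Y^i_t, Du_i^N(t,\bm{Y}_t)) - H_p(X^i_t, Du_i^N(t,\bm{X}_t))$ and a \emph{consistency} part $B^i_t := H_p(Y^i_t, Dv_i^N(t,\bm{Y}_t)) - H_p(Y^i_t, Du_i^N(t,\bm{Y}_t))$. For $A^i_t$, the identity $D_{x_i} u_i^N(t,\bm{x}) = D_x U(t, x_i, m_{\bm{x}}^{N,i})$ together with the $C^2_x$ regularity and the Lipschitz-in-$m$ (for the $1$-Wasserstein distance) regularity of $U$ established in Theorem \ref{MEsol} yields
\begin{equation*}
|Du_i^N(t,\bm{X}_t) - Du_i^N(t,\bm{Y}_t)| \leq C\,|X^i_t - Y^i_t| + \frac{C}{N}\sum_{j\neq i}|X^j_t - Y^j_t|,
\end{equation*}
so that Lipschitz continuity of $H_p$ in $(x,p)$ gives the expected bound on $A^i_t$. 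For $B^i_t$, I would invoke the gradient version of the convergence of Theorem 7.4, namely $\|Du_i^N - Dv_i^N\|_{L^\infty} \leq C/N$; Young's inequality then contributes $\tfrac{1}{2}\E|X^i_t - Y^i_t|^2 + C/N^2$. Summing over $i$, using exchangeability so that $g(t) := \E|X^i_t - Y^i_t|^2$ does not depend on $i$, and handling the $N^{-1}\sum_{j\neq i}$ coupling in $A^i_t$ via Cauchy--Schwarz, the cross terms reassemble into $C g(t)$, leaving
\begin{equation*}
g'(t) \leq C g(t) + C/N^2, \qquad g(0) = 0,
\end{equation*}
from which Gronwall gives $g(t) \leq C/N^2$ uniformly on $[0,T]$.

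The main obstacle is the upgraded gradient bound $\|Du_i^N - Dv_i^N\|_{L^\infty} \leq C/N$, which is strictly stronger than the $L^\infty$ estimate on the functions themselves supplied by Theorem 7.4. The strategy I would follow is to analyze the PDE satisfied by the difference $w_i^N := v_i^N - u_i^N$: substituting $u_i^N$ into the Nash system and measuring the defect against the master equation produces a linearized Nash-type system for $w_i^N$ with a source term of size $1/N$. Thanks to the invariance condition $(1.2)$, the interior regularity transfer from the Neumann case established in Proposition \ref{MFGL1} and used throughout Section 7 applies to this linearized system as well, and a parabolic Schauder estimate then upgrades the $L^\infty$ control on $w_i^N$ into the required $L^\infty$ control on $Dw_i^N$ at the same rate $C/N$. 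This is essentially the only step where the viability setting requires genuinely new input compared with the torus scheme of [5].
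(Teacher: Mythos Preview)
Your coupling scheme and the splitting $\Delta^i_t=A^i_t+B^i_t$ match the paper exactly, and your treatment of $A^i_t$ and of the diffusion term is the right one. The divergence is in the handling of $B^i_t$.

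You assert that you need the pointwise bound $\|D_{x_i}u_i^N-D_{x_i}v_i^N\|_{L^\infty}\leq C/N$ and then propose to manufacture it by Schauder theory on the linearized Nash system for $w_i^N=v_i^N-u_i^N$. This is both unnecessary and unavailable. It is unavailable because that system lives on $\Omega^N$: the interior parabolic Schauder constants depend on the spatial dimension, so the bound you would extract has the form $C(N)/N$, not $C/N$ (the paper itself flags in the proof of Theorem~7.1 that the $C^{1+\alpha/2,2+\alpha}$ constant for $v_i^{N,\epsilon}$ \emph{depends on $N$}). No amount of invariance/Neumann transfer fixes this, because the issue is dimensional, not boundary-related.

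It is unnecessary because Theorem~7.4 already supplies the estimate that closes the argument, namely
\[
\mathbb{E}^{\bm{Z}}\!\left[\int_{t_0}^T\bigl|D_{x_i}u_i^N(s,\bm{Y}_s)-D_{x_i}v_i^N(s,\bm{Y}_s)\bigr|^2\,ds\right]\leq \frac{C}{N^2},
\]
an $L^2$-in-time bound \emph{along the optimal trajectories}, not an $L^\infty$ bound. Since $B^i_s$ is evaluated at $\bm{Y}_s$, this controls $\int_{t_0}^t\mathbb{E}|B^i_s|^2\,ds$ directly. Working with the integral form of your inequality (rather than the differential form $g'(t)\leq Cg(t)+C/N^2$, which would indeed require a pointwise bound on $\mathbb{E}|B^i_t|^2$), you get
\[
\mathbb{E}|X^i_t-Y^i_t|^2\leq C\int_{t_0}^t\Bigl(\mathbb{E}|X^i_s-Y^i_s|^2+\tfrac{1}{N}\sum_{j}\mathbb{E}|X^j_s-Y^j_s|^2\Bigr)ds+\frac{C}{N^2}.
\]
The paper then runs Gr\"onwall twice: first summed over $i$ to obtain $\sum_i\mathbb{E}|X^i_t-Y^i_t|^2\leq C/N$, then fed back into the individual inequality to reach $C/N^2$. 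With your exchangeability observation one Gr\"onwall suffices. Either way, drop the $L^\infty$ upgrade and use the $L^2$ trajectory estimate as is.
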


\vspace{4mm}

\noindent
\textbf{Summary of the main ideas.} The paper follows the main ideas of [2], [5] and [9], but appropriate adjustments need to be made because of (1.2). To deal with the nature of (1.2), we use the main ideas presented in [1].
\vspace{2mm}

\noindent
For $U$, defined in (\ref{ME}), to be a classical solution of the \textbf{ME}, some regularity and some bounds need to be proved. Some initial regularity can be obtained by studying \textbf{(MFG)} under (1.2). Indeed, if $m_0\in L^1(\Omega)$, as proved in [1], the solution to \textbf{(MFG)} can be obtained as the limit of the classical solutions to
\begin{equation*} 
\begin{cases}
-\partial_t u^{\epsilon}-\text{tr}(a(x)D^2u^{\epsilon})+H(x,Du^{\epsilon})=F(x,m^{\epsilon})\quad ,\text{ in } (0,T)\times \Omega_{\epsilon},\\
\partial_t m^{\epsilon}-\text{div}(a(x)Dm^{\epsilon})-\text{div}(m^{\epsilon}(H_p(x,Du^{\epsilon})+\tilde{b}(x)))=0\quad ,\text{ in } (0,T)\times\Omega_{\epsilon},\\
m^{\epsilon}(0)=m_0 ,\quad u^{\epsilon}(T,x)=G(x,m^{\epsilon}(T))-\mathcal{N}_{\epsilon}(a(\cdot)D_xG(\cdot,m^{\epsilon}(T))),\\
a(x)Du^{\epsilon}\cdot \nu(x)|_{\partial \Omega_{\epsilon}}=0, \quad \left(a(x)Dm^{\epsilon}+m^{\epsilon}(H_p(x,Du^{\epsilon})+\tilde{b}(x))\right)\cdot \nu(x)|_{\partial \Omega_{\epsilon}}=0,
\end{cases}
\end{equation*}
where $\mathcal{N_{\epsilon}}$ is an operator that ensures the Neumann compatibility conditions and where $\epsilon\rightarrow 0^+$, given by results from [2]. Thus, under specific assumptions, $U$ inherits the interior regularity of $u^{\epsilon}$. The result can be extended for $m_0\in\mathcal{P}(\Omega)$ by a density argument.
\vspace{2mm}

\noindent
However, the main issue is to prove the $C^1$ character of $U$ with respect to the measure variable. The most important tool of the proof is the analysis of the linearized system.
\begin{equation*} \label{L}
\bm{(L)} :\begin{cases}
-\partial_t v-\text{tr}(a(x)D^2v)+H_p(x,Du)\cdot Dv=\frac{\delta F}{\delta m}(x,m(t))(\mu (t)),\text{ in } (t_0,T)\times\Omega,\\
\partial_t\mu-\text{div}(a(x)D\mu)-\text{div}(\mu (H_p(x,Du)+\tilde{b}(x)))-\text{div}(m H_{pp}(x,Du)Dv)=0,\\
\hspace{11.5cm}\text{ in }(t_0,T)\times\Omega,\\
v(T,x)=\frac{\delta G}{\delta m}(x,m(T))(\mu (T))\text{ in }\Omega,\quad \mu (t_0)=\mu _0\in C^{-(1+\alpha)}(\Omega).
\end{cases}
\end{equation*}
Using the same idea as in \textbf{(MFG)}, when $m_0\in L^1(\Omega)$, we can approximate a solution of the linearized system in $\Omega$ under (1.2) with solutions of the linearized system in $\Omega_{\epsilon}$ with Neumann conditions (existence is established in [2]). The convergence will come from estimate (5.20) in [2]. Then, we can generalize the result for $m_0\in\mathcal{P}(\Omega)$ with a density argument. The regularity will follow from the estimates for the linearized system.
\vspace{2mm}

\noindent
Using the regularity of $U$, we can show that $U$ satisfies the \textbf{ME} and, by an argument similar to the one in [5], that it is the unique function with that property. 
\vspace{2mm}

\noindent
Finally, we establish the well-posedness of (\ref{NS}) under (1.2) and we use similar ideas as in [5], [9] and [16] to prove the convergence results.

\vspace{4mm}

\noindent
\textbf{Organization of the paper.} The paper is organized as follows:

\vspace{1mm}

\noindent
Section 2 contains our notation and assumptions we will need to prove the results. Section 3 contains useful preliminary results that will appear throughout the paper. In section 4, using our assumptions, we strengthen the results from [1] and obtain some regularity for $U$. In section 5, we solve the linearized system provided that $(1.2)$ holds near the boundary, which provides us with further regularity for $U$. In section 6, we prove that the \textbf{ME} has a unique classical solution. In section 7, we establish the well-posedness of (\ref{NS}) under (1.2) near the boundary and we prove the convergence results.

\vspace{10mm}

\section{Notation and Hypotheses}

\subsection{The domain}
We assume throughout the paper that $\Omega$ is a bounded open subset of $\mathbb{R}^d$, with boundary of class $C^{3+\alpha}$ ([7, 14]), that is, $\partial \Omega$ is locally a graph of a $C^{3+\alpha}$-function. We recall that the oriented distance from $\partial \Omega$, denoted by $d_{\Omega}$, is the function defined by 
$$d_{\Omega}(x)=\begin{cases}
d(x,\partial \Omega),\text{        if }x\in\Omega,\\
-d(x,\partial \Omega),\text{ if }x\notin\Omega.
\end{cases}$$
We write $d$ instead of $d_{\Omega}$ when there is no possible confusion.
We will, also, be working in subdomains
$$\Omega_{\epsilon}:= \{ x\in\Omega : d(x)>\epsilon\},\text{ for }\epsilon>0$$
For convenience, when $\epsilon=0$, we write $\Omega_{\epsilon}=\Omega$.
Due to the regularity of the boundary of $\Omega$, there exists $\epsilon_0>0$ such that for every $\epsilon\in (0,\epsilon_0)$ we have $d\in C^{3+\alpha}(\Omega\setminus \Omega_{\epsilon})$ and for all $x\in \Omega\setminus \Omega_{\epsilon} $, there exists a unique point $\tilde{x}\in \partial\Omega$ such that $d(x)=|x-\tilde{x}|$, where $|\cdot |$ is the Euclidean norm. Moreover, on $\Omega\setminus \Omega_{\epsilon}$,
$$Dd(x)=Dd(\tilde{x})=-\nu(\tilde{x}),$$
where $\nu(\tilde{x})$ is the exterior unit normal vector of $\partial \Omega$ at $\tilde{x}$. 
\vspace{2mm}

\noindent
Finally, a classical regularizing argument allows us to consider a non-negative $C^{3+\alpha}(\overline{\Omega})$-function, called again $d_{\Omega}$, which coincides with the oriented distance near the boundary.

\subsection{Notation}
Throughout the paper, $T>0$ will be a fixed time and $Q_T:=[0,T]\times \Omega$. Also, $Q_T^{\epsilon}$ stands for $[0,T]\times\Omega_{\epsilon}$.

\subsubsection{Spaces and Norms}
\noindent
For $n\geq 0$ and $\alpha\in (0,1)$ we denote with $C^{n+\alpha}(\Omega)$ or simply $C^{n+\alpha}$, when there is no confusion regarding the domain, the space of functions $\phi\in C^n(\Omega)$ such that for each $l\in\mathbb{N}^r$, with $1\leq r\leq n$, the derivative $D^l\phi$ is Hölder continuous with constant $\alpha$. The norm on this space is given by
$$||\phi||_{n+\alpha}:=\sum_{|l|\leq n}||D^l\phi||_{\infty}+\sum_{|l|=n}\sup_{x\neq y}\frac{|D^l\phi (x)-D^l\phi (y)|}{|x-y|^{\alpha}}.$$

\noindent
We also denote by $C^{n+\alpha}_c(\Omega)$ the closure of the subspace of compactly supported functions in $C^{n+\alpha}(\Omega)$, by $C^{-(n+\alpha)}$ and $C^{-(n+\alpha)}_c$ the corresponding duals and by $\langle \cdot,\cdot\rangle_{1+\alpha}$ the corresponding duality bracket. Finally, given a function $a(\cdot)$, which we will define later (Hypothesis (1)), we consider the space $C^{n+\alpha,N}(\Omega)$ i.e the space of functions $\phi\in C^{n+\alpha}(\Omega)$ such that $a(x)D_x\phi\cdot\nu(x)|_{\partial\Omega}=0$.

\vspace{3mm}

\noindent
We also define several parabolic spaces:
\vspace{1mm} 

\noindent
We write $\phi\in C^{\frac{n+\alpha}{2},n+\alpha}([0,T]\times\Omega)$, if $\phi$ is continuous in both variables, together with all the derivatives $D^r_tD^s_x\phi$, where $2r+s\leq n$. Moreover, we introduce the norm
\begin{multline*}
||\phi||_{\frac{n+\alpha}{2},n+\alpha}:=\sum_{2r+s\leq n}||D^r_tD^s_x\phi||_{\infty}+\sum_{2r+s=n}\sup_{t\in [0,T]}||D^r_tD^s_x\phi(t,\cdot )||_{\alpha}\\
+\sum_{0<n+\alpha-2r-s<2}\sup_x||D^r_tD^s_x\phi(\cdot, x)||_{\frac{n+\alpha-2r-s}{2}}.
\end{multline*}

\vspace{2mm}

\noindent
We define the space of space-time continuous functions which satisfy a Hölder condition in $x$ to be $C^{0,\alpha}([0,T]\times\Omega)$ with the norm
$$||\phi||_{0,\alpha}=\sup_{t\in [0,T]}||\phi(t,\cdot )||_{\alpha}.$$
An analogous definition can be given for the space $C^{\alpha,0}$. 
\vspace{3mm}

\noindent
The space $C^{1,2+\alpha}$ consists of functions which are differentiable in time and twice differentiable in space, with all derivatives in $C^{0,\alpha}(\overline{Q_T})$. The norm for this space is 
$$||\phi||_{1,2+\alpha}:=||\phi||_{\infty}+||\partial_t\phi||_{0,\alpha}+||D_x\phi||_{\infty}+||D^2_x\phi||_{0,\alpha}.$$

\vspace{3mm}

\noindent
Finally, $W^{-k,p}$ is the dual of the Sobolev space $W^{k,p}$.

\subsubsection{Generalized Wasserstein Space and Derivative}

Let $\mathcal{P}^{sub}(\Omega)$ be the space of all Borel subprobability measures, that is the space of all Borel non-negative measures in $\Omega$ with total mass at most $1$, and $\mathcal{P}(\Omega)$ be the space of probability measures.

\begin{defn}
Let $m_1,m_2\in \mathcal{P}^{sub}(\Omega)$ be two Borel sub-probability measures in $\Omega$. We call the generalized 1-Wasserstein distance $\bm{d_1}(m_1,m_2)$ between $m_1$ and $m_2$ to be
$$\bm{d_1}(m_1,m_2):=\sup_{Lip(\phi)\leq 1}\int_{\Omega}\phi(x)d(m_1-m_2)(x).$$
\end{defn}

\vspace{3mm}

\begin{rem}
In $\mathcal{P}^{sub}(\Omega_{\epsilon})$ we will denote the generalized Wasserstein distance by $\bm{d_1}^{\epsilon}$.
\end{rem}

\vspace{3mm}

\begin{defn}
\textbf{(i)} Let $V:\mathcal{P}^{sub}(\Omega)\rightarrow \mathbb{R}$. We say that $V$ is of class $C^1$ if there exists a continuous map $K:\mathcal{P}^{sub}(\Omega)\times \Omega\rightarrow \mathbb{R}$ such that, for all $m_1,m_2\in \mathcal{P}^{sub}(\Omega)$ we have 
$$\lim_{s\rightarrow 0}\frac{V(m_1+s(m_2-m_1))-V(m_1)}{s}=\int_{\Omega}K(m_1,x)(m_2(dx)-m_1(dx)).$$
Moreover, we write $K(m,x)=\frac{\delta V}{\delta m}(m,x)$ for all $(m,x)\in\mathcal{P}^{sub}(\Omega)\times \Omega$.
\vspace{1mm}

\noindent
\textbf{(ii)} If $\frac{\delta V}{\delta m}(m,\cdot)\in C^1$, then the intrinsic derivative $D_mV:\mathcal{P}^{sub}(\Omega)\times \Omega \rightarrow \mathbb{R}^d$ is defined as 
$$D_mV(m,x):=D_x\frac{\delta V}{\delta m}(m,x).$$
\end{defn}

\begin{rem}
If instead of $\mathcal{P}^{sub}(\Omega)$, we were working on $\mathcal{P}(\Omega)$, then $K$ is defined the same way (e.g [5]) but up to a constant. We usually choose the constant such that $\int_{\Omega} K(m,x)dx=0$.
\end{rem}

\vspace{3mm}

\subsection{Hypotheses}

We assume the following
\begin{enumerate}
    \item (Uniform ellipticity) $a,\sigma: \Omega\rightarrow \mathbb{M}_d(\mathbb{R})$, such that $a=\sigma\sigma^T$,
    $||a(\cdot)||_{1+\alpha}< \infty$ and there exist $\theta>\lambda>0$ such that $\forall p\in\mathbb{R}^d$
    $$\theta|p|^2\geq  a(x)p\cdot p \geq \lambda |p|^2\quad ;$$
    \item $H:\Omega\times \mathbb{R}^d\rightarrow \mathbb{R}$ is smooth, Lipschitz continuous, $H(\cdot ,0)\in L^{\infty}(\Omega)$ and there exists $ C_{H}>0$ such that
    $$C_H^{-1}\frac{I_{d\times d}}{1+|p|}\leq H_{pp}(x,p)\leq C_{H}I_{d\times d}\quad ;$$
    \item $F:\Omega\times \mathcal{P}^{sub}(\Omega)\rightarrow \mathbb{R}$ is smooth and monotone, i.e,
    $$\int_{\Omega}(F(x,m)-F(x,m'))d(m-m')(x)\geq 0.$$
    In addition, for some $0<\alpha<1$ and $C_F>0$, it satisfies
    $$\sup_{m\in \mathcal{P}^{sub}(\Omega)}\left(||F(\cdot,m)||_{\alpha}+\left|\left|\frac{\delta F}{\delta m}(\cdot,m,\cdot)\right|\right|_{\alpha,2+\alpha}\right)+\text{Lip}_{\alpha}\left(\frac{\delta F}{\delta m}\right)\leq C_F,$$
    with
    $$\text{Lip}_{\alpha}\left(\frac{\delta F}{\delta m}\right):=\sup_{m_1\neq m_2}\left( \bm{d_1}(m_1,m_2)^{-1}\left|\left|\frac{\delta F}{\delta m}(\cdot,m_1,\cdot)-\frac{\delta F}{\delta m}(\cdot,m_2,\cdot)\right|\right|_{\alpha,1+\alpha}\right);$$
    \item $G:\Omega\times \mathcal{P}^{sub}(\Omega)\rightarrow \mathbb{R}$ is smooth and monotone, i.e,
    $$\int_{\Omega}(G(x,m)-G(x,m'))d(m-m')(x)\geq 0.$$
    In addition, for some $0<\alpha<1$ and $C_G>0$,
    $$\sup_{m\in \mathcal{P}^{sub}(\Omega)}\left(||G(\cdot,m)||_{2+\alpha}+\left|\left|\frac{\delta G}{\delta m}(\cdot,m,\cdot)\right|\right|_{2+\alpha,2+\alpha}\right)+\text{Lip}_{1+\alpha}\left(\frac{\delta G}{\delta m}\right)\leq C_G,$$
    with
    $$\text{Lip}_{1+\alpha}\left(\frac{\delta G}{\delta m}\right):=\sup_{m_1\neq m_2}\left( \bm{d_1}(m_1,m_2)^{-1}\left|\left|\frac{\delta G}{\delta m}(\cdot,m_1,\cdot)-\frac{\delta G}{\delta m}(\cdot,m_2,\cdot)\right|\right|_{2+\alpha,2+\alpha}\right);$$
    \item The following conditions are assumed for $F,G$ :
$$a(x)D_xG(x,m)\cdot \nu(x)|_{\partial\Omega}=0\text{ , for all }m\in \mathcal{P}^{sub}(\Omega)\text{ and }x\in\partial\Omega.$$
\vspace{2mm}

\noindent
For every $\epsilon>0$, there exists a functions $\frac{\delta F^{\epsilon}}{\delta m},\frac{\delta G^{\epsilon}}{\delta m}: \Omega\times \mathcal{P}^{sub}(\Omega)\times \Omega\rightarrow \mathbb{R}$ such that $\text{ for all }(x,m)\in\Omega\times\mathcal{P}^{sub}(\Omega)$:
\begin{align*}
&\text{spt} \left(\frac{\delta F^{\epsilon}}{\delta m} (x, m,\cdot) \right) \subset \Omega_{\epsilon}\;, \text{spt}\left(\frac{\delta G^{\epsilon}}{\delta m}(\cdot , m,\cdot)\right)\subset\Omega_{\epsilon}\times\Omega_{\epsilon}\;,\\
&\lim_{\epsilon\rightarrow 0^+}\sup_{m\in \mathcal{P}^{sub}(\Omega)}\left|\left| \frac{\delta G^{\epsilon}}{\delta m}(\cdot, m,\cdot)-\frac{\delta G}{\delta m}(\cdot, m,\cdot)\right|\right|_{C^{2+\alpha}(\Omega)\times C^{2+\alpha}(\Omega)}=0\;\;\;\text{     and }\\
&\lim_{\epsilon\rightarrow 0^+}\sup_{m\in \mathcal{P}^{sub}(\Omega)}\left|\left| \frac{\delta F^{\epsilon}}{\delta m}(\cdot, m,\cdot)-\frac{\delta F}{\delta m}(\cdot, m,\cdot)\right|\right|_{C^{\alpha}(\Omega)\times C^{1+\alpha}(\Omega)}=0.
\end{align*}

\noindent
Furthermore,
\begin{align*}
& \text{Lip}_{\alpha}\left(\frac{\delta F^{\epsilon}}{\delta m}\right)\leq C_F,\; \text{Lip}_{1+\alpha}\left(\frac{\delta G^{\epsilon}}{\delta m}\right)\leq C_G,\\
&\int_{\Omega_{\epsilon}}\int_{\Omega_{\epsilon}} \frac{\delta F^{\epsilon}}{\delta m}(x,m,y)\mu(x)\mu(y)\;dxdy\geq 0\quad \text{ and }\\
&\int_{\Omega_{\epsilon}}\int_{\Omega_{\epsilon}} \frac{\delta G^{\epsilon}}{\delta m}(x,m,y)\mu(x)\mu(y)\;dxdy\geq 0, 
\end{align*}
for any signed measure $\mu$ and $m\in\mathcal{P}^{sub}(\Omega)$.
\vspace{2mm}

\item
Finally, we assume the invariance condition in terms of the diffusion matrix and the Hamiltonian $H(x,p)$. Namely, we assume that there exists a $\delta_0>0$ and a $C>0$ such that the following inequality holds :
\begin{equation} \label{PR}
\text{tr}(a(x)D^2d(x))-H_p(x,p)\cdot Dd(x)\geq \frac{a(x)Dd(x)\cdot Dd(x)}{d(x)}-Cd(x)
\end{equation}
for all $p\in\mathbb{R}^d$ and a.e $x\in \Omega\setminus \Omega_{\delta_0}$.

\end{enumerate}

\vspace{5mm}

\begin{rem}
\textbf{(i)} Assumptions (1), (2), (3) and (4) are standard in the literature and are used to prove the existence and the uniqueness of a solution to the MFG system and the linearized system as well as some useful estimates.
\vspace{1mm}

\noindent
\textbf{(ii)} Assumption (5) provides us with compatibility conditions which allow us to get the regularity of the solutions to \textbf{(MFG)} and construct the solution of the linearized system. Its usefulness will be apparent in Proposition \ref{L}, where we prove the existence of a solution to the linearized system.
\vspace{1mm}

\noindent
\textbf{(iii)} Assumption (6) implies that the stochastic trajectories stay inside the domain $\Omega$ almost surely and allows us to ``integrate by parts'' in some specific instances that will be examined later. For more information about this condition, we refer to [1].

\end{rem}

\vspace{7mm}

\addtocontents{toc}{\protect\setcounter{tocdepth}{2}}

\section{Preliminary results}

\subsection{Estimates}
In this section, we will state two estimates from [2] that will be useful in the proofs later.
\vspace{2mm}

\begin{lem} \label{3.1}
Suppose $a$ satisfies Hypothesis (1) and $b,f\in L^{\infty}(Q_T)$. Furthermore, let $\psi\in C^{1+\alpha,N}$ with $0\leq \alpha< 1$. Then, the unique solution $z$ of the problem 
$$\begin{cases}
-\partial_t z-\text{tr}(a(x)D^2z)+b(t,x)\cdot Dz=f(t,x)\text{ },\text{ in }(0,T)\times\Omega,\\
z(T)=\psi\text{ },\;\text{in }\Omega,\\
aDz\cdot\nu|_{\partial\Omega}=0,\text{ on }\partial\Omega,
\end{cases}$$
satisfies 
$$||z||_{\frac{1+\alpha}{2},1+\alpha}\leq C(||f||_{\infty}+||\psi||_{1+\alpha}).$$
\end{lem}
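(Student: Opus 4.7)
The plan is to derive the estimate via $L^p$-parabolic regularity theory for oblique derivative problems, followed by Sobolev embedding. Since $f\in L^\infty$ is not Hölder continuous, one cannot reach the full $C^{1,2+\alpha}$ regularity; the intermediate $C^{(1+\alpha)/2,1+\alpha}$ estimate is the natural outcome at this level of data regularity.

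First I would reverse time by setting $\tilde z(s,x):=z(T-s,x)$, which converts the problem into a forward parabolic equation with initial datum $\psi$, right-hand side $\tilde f(s,x)=f(T-s,x)\in L^\infty$, drift $\tilde b(s,x)=b(T-s,x)\in L^\infty$, and the same boundary condition $aD\tilde z\cdot\nu=0$. This boundary condition is of oblique (non-tangential) type, since uniform ellipticity gives $a(x)\nu(x)\cdot\nu(x)\geq\lambda|\nu(x)|^2>0$ on $\partial\Omega$. Uniqueness then follows from the maximum principle for oblique boundary problems, and the $L^\infty$ bound $\|\tilde z\|_\infty\leq \|\psi\|_\infty+T\|f\|_\infty$ follows similarly.

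Next I would invoke the $W^{1,2}_p$-parabolic estimate for conormal boundary value problems, a version of which is the content of the estimate imported from [2]. For any $p\in(1,\infty)$ and any $\psi$ satisfying the compatibility condition $aD\psi\cdot\nu|_{\partial\Omega}=0$ (which is precisely the assumption $\psi\in C^{1+\alpha,N}$), one has
\[
\|\tilde z\|_{W^{1,2}_p(Q_T)}\leq C\bigl(\|\tilde f\|_{L^p(Q_T)}+\|\psi\|_{W^{2-2/p}_p(\Omega)}\bigr),
\]
with $C$ depending only on $\lambda,\theta,\|a\|_{1+\alpha},\|b\|_\infty,\Omega,T,p$. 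Choosing $p$ large enough so that $2-2/p>1+\alpha$ and the parabolic Sobolev embedding $W^{1,2}_p(Q_T)\hookrightarrow C^{(1+\alpha)/2,1+\alpha}(\overline{Q_T})$ holds (i.e. $p>(d+2)/(1-\alpha)$), and using the elementary bounds $\|\tilde f\|_{L^p}\leq C\|f\|_\infty$ and $\|\psi\|_{W^{2-2/p}_p(\Omega)}\leq C\|\psi\|_{1+\alpha}$, I obtain the claimed estimate for $\tilde z$; undoing the time reversal completes the proof.

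The main obstacle is this second step: ensuring that the $L^p$-parabolic estimate applies with the conormal boundary condition $aDz\cdot\nu=0$ (rather than the pure Neumann condition $Dz\cdot\nu=0$) and with a constant having the stated dependencies. The key observations enabling this are that $a(\cdot)\nu(\cdot)\in C^{1+\alpha}(\partial\Omega)$, by Hypothesis (1) and $C^{3+\alpha}$-regularity of $\partial\Omega$, and that it makes a uniformly positive angle with $\nu$, so the boundary operator is a non-degenerate oblique derivative. One can either cite the corresponding result in [2] directly, or, to reproduce it, flatten $\partial\Omega$ locally by a $C^{3+\alpha}$ change of variables, reduce to the half-space case with an oblique derivative condition, and apply the classical Agmon--Douglis--Nirenberg / Ladyzhenskaya--Solonnikov--Uraltseva theory, gluing the local estimates with a finite partition of unity and absorbing lower-order terms via the interior estimate.
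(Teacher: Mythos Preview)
The paper does not prove this lemma at all; it cites Lemma~3.1 of [2]. Your sketch is therefore an attempt to reproduce that argument, and the overall plan ($W^{1,2}_p$ regularity for the oblique problem, then parabolic Sobolev embedding) is the natural one and is essentially the route used in [2] for the zero--terminal--data part. However, the treatment of the terminal datum $\psi$ contains a genuine gap.

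You require $p>(d+2)/(1-\alpha)$ for the embedding $W^{1,2}_p(Q_T)\hookrightarrow C^{(1+\alpha)/2,\,1+\alpha}$. For any such $p$ one has
\[
2-\frac{2}{p}\;>\;2-\frac{2(1-\alpha)}{d+2}\;=\;\frac{2d+2+2\alpha}{d+2}\;>\;1+\alpha\qquad(d\geq 1),
\]
so the trace space $W^{2-2/p}_p(\Omega)=B^{2-2/p}_{p,p}(\Omega)$ asks for strictly more regularity than $C^{1+\alpha}(\overline\Omega)=B^{1+\alpha}_{\infty,\infty}(\Omega)$ provides. On a bounded domain the embedding $C^{1+\alpha}\hookrightarrow B^{2-2/p}_{p,p}$ holds only when $2-2/p<1+\alpha$, i.e.\ $p<2/(1-\alpha)$, which is incompatible with $p>(d+2)/(1-\alpha)$. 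Hence the ``elementary bound'' $\|\psi\|_{W^{2-2/p}_p}\leq C\|\psi\|_{1+\alpha}$ that you invoke is false in the regime you need, and the $W^{1,2}_p$ estimate cannot be applied with data $\psi\in C^{1+\alpha,N}$ for that $p$.

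The standard repair is to decouple the two contributions. Write $z=z_1+z_2$ where $z_2$ solves the equation with source $f$ and zero terminal datum; your $L^p$ argument then goes through verbatim and yields $\|z_2\|_{(1+\alpha)/2,\,1+\alpha}\leq C\|f\|_\infty$. For the homogeneous part $z_1$ (source $0$, datum $\psi$) one cannot use $W^{1,2}_p$ theory; instead one appeals to the optimal--regularity results for analytic semigroups in Lunardi~[6]: the principal part $\text{tr}(a(x)D^2\cdot)$ with the conormal boundary condition generates an analytic semigroup on $C(\overline\Omega)$, the space $C^{1+\alpha,N}$ is the corresponding real--interpolation space $(C,D(A))_{(1+\alpha)/2,\infty}$, and this yields $\|z_1\|_{(1+\alpha)/2,\,1+\alpha}\leq C\|\psi\|_{1+\alpha}$. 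The bounded drift $b(t,x)\cdot D$ is then absorbed by a short--time / Gronwall argument, since $\|b\cdot Dz_1\|_\infty\leq \|b\|_\infty\|z_1\|_{(1+\alpha)/2,\,1+\alpha}$ feeds back through the $\psi=0$ estimate with a factor of $T$. This is the mechanism behind the proof in [2].
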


\begin{proof}
This is Lemma 3.1 from [2].
\end{proof}

\vspace{3mm}

\begin{lem} \label{3.2}
Suppose $a$ and $b$ are bounded continuous functions and $\psi\in W^{1,\infty}(\Omega)$. Then, the unique solution $z$ of the problem 
$$\begin{cases}
-\partial_t z-\text{tr}(a(x)D^2z)+b(t,x)\cdot Dz=f(t,x)\text{ },\text{ in }(0,T)\times\Omega,\\
z(T,x)=\psi(x)\text{ },\text{ in }\Omega,\\
aDz\cdot\nu|_{\partial\Omega}=0,\text{ on }\partial\Omega,
\end{cases}$$
satisfies a Hölder condition in $t$ and a Lipschitz condition in $x$, namely, there exists $C>0$ such that 
$$|z(t,x)-z(s,x)|\leq C||\psi||_{W^{1,\infty}}|t-s|^{1/2},\text{   }
|z(t,x)-z(t,y)|\leq C||\psi||_{W^{1,\infty}}|x-y|.$$
\end{lem}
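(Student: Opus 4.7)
The plan is to establish both estimates on smoothed versions of the problem where Lemma~\ref{3.1} applies, and then pass to the limit, the key step being to replace Lemma~\ref{3.1}'s bound (which depends on $\|\psi\|_{1+\alpha}$) by one depending only on $\|\psi\|_{W^{1,\infty}}$. First I would approximate $\psi\in W^{1,\infty}(\Omega)$ by a sequence $\psi_n\in C^{1+\alpha,N}(\Omega)$ with $\psi_n\to\psi$ uniformly and $\|\psi_n\|_{W^{1,\infty}}\le C\|\psi\|_{W^{1,\infty}}$ independently of $n$; such a sequence is produced by standard mollification combined with a correction near $\partial\Omega$ to restore the Neumann compatibility $aD\psi_n\cdot\nu=0$. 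Lemma~\ref{3.1} then yields classical solutions $z_n$ of the corresponding problem.

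For the Lipschitz-in-$x$ bound, I would differentiate the equation along a smoothly extended tangent frame near $\partial\Omega$. Each tangential derivative $D_\tau z_n$ satisfies a linear parabolic equation whose source gathers lower-order terms involving derivatives of $a$ and $b$ acting on $Dz_n$, while the Neumann condition $aDz_n\cdot\nu=0$ differentiates tangentially into an oblique boundary condition for $D_\tau z_n$ modulo lower-order perturbations coming from derivatives of $a$ and $\nu$. A maximum principle for this oblique-derivative problem, together with an exponential-in-time multiplier to absorb the zeroth-order perturbation, gives $\|D_\tau z_n\|_\infty\le C\|\psi\|_{W^{1,\infty}}$; the normal component $\partial_\nu z_n$ on the boundary is then recovered from $aDz_n\cdot\nu=0$ as a linear combination of tangential derivatives, and interior gradient estimates handle the bulk. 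A conceptually cleaner (probabilistic) alternative is to use the Feynman--Kac representation $z_n(t,x)=\E\big[\psi_n(X^{t,x}_T)+\int_t^T f(s,X^{t,x}_s)\,ds\big]$ for the reflected diffusion with drift $-b$ and diffusion $\sqrt{2}\sigma$, combined with the flow estimate $\E|X^{t,x}_T-X^{t,y}_T|\le C|x-y|$.

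Once $\|Dz_n\|_\infty$ is uniformly controlled, the $1/2$-Hölder estimate in $t$ follows from a standard barrier argument: at each $(t_0,x_0)$ one uses $\pm M(|x-x_0|^2+\lambda(t_0-t))^{1/2}$ as sub-/super-solutions compatible with the Neumann condition, with $M,\lambda$ depending on $\|Dz_n\|_\infty$, $\|a\|_\infty$, $\|b\|_\infty$ and $\|f\|_\infty$; the probabilistic counterpart uses $\E|X^{t,x}_T-X^{s,x}_T|\le C|t-s|^{1/2}$. All estimates being uniform in $n$, Arzelà--Ascoli extracts a subsequence of $\{z_n\}$ converging uniformly on $\overline{Q_T}$ to a function inheriting both estimates; uniqueness of the PDE (comparison principle for the Neumann problem) identifies the limit with $z$. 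The main obstacle is the uniform space-gradient bound: the Neumann condition does not differentiate cleanly into another Neumann condition, and one has to track the induced zeroth- and first-order error terms carefully to close the oblique-derivative maximum-principle argument while keeping dependence on $\psi$ strictly at the $W^{1,\infty}$ level.
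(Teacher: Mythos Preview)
The paper does not actually prove this lemma: its entire ``proof'' is the single sentence ``This is Lemma 3.2 from [2].'' So there is no argument in the paper to compare your proposal against; the result is imported wholesale from Ricciardi's Neumann paper.

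Your sketch therefore goes well beyond what the present paper does. Of the two routes you outline, the probabilistic one (Feynman--Kac for the reflected diffusion plus the flow estimates $\E|X^{t,x}_T-X^{t,y}_T|\le C|x-y|$ and $\E|X^{t,x}_T-X^{s,x}_T|\le C|t-s|^{1/2}$) is the cleaner and is essentially how such bounds are obtained in [2]. The PDE route via tangential differentiation is more fragile than you indicate: after differentiating $aDz_n\cdot\nu=0$ along a tangent field you get an oblique condition on $D_\tau z_n$ whose lower-order perturbation involves the \emph{full} gradient $Dz_n$ (through derivatives of $a$ and $\nu$), not just $D_\tau z_n$. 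Your maximum-principle step then needs $\|Dz_n\|_\infty$ as input to bound $\|D_\tau z_n\|_\infty$, which is circular. To close this one usually runs a Bernstein argument on $|Dz_n|^2$ directly rather than component by component. One minor point: the stated estimate suppresses any dependence on $f$; your barrier argument correctly picks up $\|f\|_\infty$, and indeed in every application of this lemma in the paper one has $f=0$.
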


\begin{proof}
This is Lemma 3.2 from [2].
\end{proof}

\vspace{5mm}

\subsection{Properties of the domain and the subdomains}

Since the boundary $\partial\Omega$ of the domain we are working on has $C^{3+\alpha}$-regularity, the boundaries $\partial\Omega_{\epsilon}$, $\epsilon\in (0,\frac{\epsilon_0}{3}]$, of the subdomains inherit some regularity as well. This property is summarized in the following Proposition.

\vspace{4mm}

\begin{prop} \label{domains}
 For all $\epsilon\in [0,\frac{\epsilon_0}{3}]$ the following statements are true :
 
 \noindent
 (i) There exists an $M>0$, independent of $\epsilon$, with the following property: for every $x\in\partial\Omega_{\epsilon}$, there exists an $r_x>0$ such that $\partial\Omega_{\epsilon}\cap B(x,r_x)$ is the graph a  function $\phi_{\epsilon}\in C^{2+\alpha}$ with
 $$||\phi_{\epsilon}||_{2+\alpha}\leq M.$$
 In particular, $\partial\Omega_{\epsilon}$ is of class $C^{2+\alpha}$.
 
 \vspace{1mm}
 
 \noindent
 (ii) $\Omega$ and $\Omega_{\epsilon}$ both satisfy the interior ball condition with a radius that can be chosen to be independent of $\epsilon$.

\end{prop}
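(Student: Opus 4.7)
The plan is to view the inner parallel surfaces $\partial\Omega_\epsilon$ as images of $\partial\Omega$ under the inward normal flow, and to transfer the $C^{3+\alpha}$ chart data on $\partial\Omega$ uniformly in $\epsilon$ via this flow. The signed distance $d$ is $C^{3+\alpha}$ in the tubular neighborhood $\Omega\setminus\Omega_{\epsilon_0}$ with $|Dd|=1$ and $Dd(x)=-\nu(\tilde x)$, so the map $\Phi_\epsilon:\partial\Omega\to\partial\Omega_\epsilon$, $\Phi_\epsilon(y)=y-\epsilon\nu(y)$, is a bijection whose tangential differential is $I-\epsilon D\nu(y)$. The eigenvalues of the Weingarten map $D\nu(y)$ are the principal curvatures of $\partial\Omega$, bounded by some $\kappa>0$; after (harmlessly) shrinking $\epsilon_0$ so that $\epsilon_0/3<1/(2\kappa)$, the differential is uniformly invertible and $\Phi_\epsilon$ is a $C^{2+\alpha}$-diffeomorphism whose $C^{2+\alpha}$-norm and that of its inverse are bounded independently of $\epsilon$.

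For (i), given $x\in\partial\Omega_\epsilon$, set $\tilde x=x-\epsilon Dd(x)\in\partial\Omega$. By compactness of $\partial\Omega$ and the uniform $C^{3+\alpha}$ regularity of its charts, there exist $\rho>0$ and $M_0>0$, independent of $\tilde x$, such that $\partial\Omega\cap B(\tilde x,\rho)$ is the graph of a $C^{3+\alpha}$ function $\psi_{\tilde x}$ with $\|\psi_{\tilde x}\|_{3+\alpha}\le M_0$ in a coordinate system whose last axis is $\nu(\tilde x)$. I would then push this graph through $\Phi_\epsilon$ and apply the implicit function theorem in the frame $(\nu(x),T_x\partial\Omega_\epsilon)$ to obtain a $C^{2+\alpha}$ function $\phi_\epsilon$ representing $\partial\Omega_\epsilon\cap B(x,r_x)$ as a graph. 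Because all implicit-function data (the transversality constant and the $C^{2+\alpha}$-norm of $\Phi_\epsilon\circ(\mathrm{id},\psi_{\tilde x})$) are uniform in $\epsilon$ and $x$, so are $r_x$ and $M:=\|\phi_\epsilon\|_{2+\alpha}$.

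For (ii), let $r_0>0$ be an interior ball radius for $\Omega$, which exists by the $C^2$-regularity of $\partial\Omega$. Given $x\in\partial\Omega_\epsilon$ with foot $\tilde x\in\partial\Omega$, the ball $B(c,r_0)\subset\Omega$ tangent at $\tilde x$ has center $c=\tilde x-r_0\nu(\tilde x)$; since $B(c,r_0)\subset\Omega$ we have $|c-y|\ge r_0$ for every $y\in\partial\Omega$, hence every $p\in B(c,r_0-\epsilon)$ satisfies $d(p,\partial\Omega)>\epsilon$, i.e.\ $p\in\Omega_\epsilon$. The identity $x=\tilde x-\epsilon\nu(\tilde x)$ gives $|x-c|=r_0-\epsilon$, so $B(c,r_0-\epsilon)$ is an interior ball of $\Omega_\epsilon$ tangent at $x$ with radius at least $r_0-\epsilon_0/3$, which is positive after a further cosmetic shrinking of $\epsilon_0$.

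The main obstacle is the uniformity claim in (i): one must verify that neither the chart radius $r_x$ nor the Hölder seminorm of $\phi_\epsilon$ degenerates as $\epsilon$ varies in $[0,\epsilon_0/3]$. This reduces to the uniform non-degeneracy of $I-\epsilon D\nu(y)$, secured by choosing $\epsilon_0$ strictly below the reciprocal of the maximal principal curvature of $\partial\Omega$. Once this is done, every other estimate in the proof propagates from the fixed $C^{3+\alpha}$ chart atlas of $\partial\Omega$ through the controlled perturbation $\Phi_\epsilon$, making the bounds $M$ and $r_0-\epsilon_0/3$ truly independent of $\epsilon$.
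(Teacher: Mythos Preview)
Your proof is correct. For part~(i) it is essentially the paper's argument: both transport the local chart data of $\partial\Omega$ along the normal map $y\mapsto y\mp\epsilon\nu(y)$ and then appeal to the implicit/inverse function theorem, with the uniform $C^{2+\alpha}$ control inherited from the $C^{3+\alpha}$ regularity of $\partial\Omega$ and of $\nu$. The paper pulls back a defining function $g$ to $g_\epsilon(y)=g(y-\epsilon\nu(y))$ and reads off a graph from its zero set, while you push the graph forward through $\Phi_\epsilon$ and re-parametrize; these are equivalent packagings. (Your shrinking of $\epsilon_0$ is harmless but in fact unnecessary: the unique-footpoint property used to define $\epsilon_0$ already forces $\epsilon_0\le 1/\kappa$.) For part~(ii) the constructions differ. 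The paper places a ball of fixed radius $\epsilon_0/3$ centered at $x+\tfrac{\epsilon_0}{3}Dd(x)$ and checks directly, using only the unique-projection property in the $\epsilon_0$-tube, that it lies in $\Omega_\epsilon$ and meets $\partial\Omega_\epsilon$ only at $x$. You instead take an interior ball $B(c,r_0)\subset\Omega$ tangent at the footpoint $\tilde{x}$ and shrink it concentrically to $B(c,r_0-\epsilon)\subset\Omega_\epsilon$, tangent at $x$. Both are short and elementary; the paper's route avoids introducing an auxiliary interior-ball radius $r_0$ for $\Omega$, while yours makes the underlying mechanism more transparent.
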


\begin{proof}
We note that for $\epsilon=0$, the second result is well known (e.g [7]), while the first result follows from the compactness of $\partial\Omega$. So, there exists $M_0>0$, such that for all $x\in\partial\Omega$, there exists an $r_x>0$ such that $\partial\Omega\cap B(x,r_x)$ is the graph a  function $\phi\in C^{3+\alpha}$ with
 $$||\phi||_{3+\alpha}\leq M_0\;.$$
 In particular, $||\phi||_{2+\alpha}\leq M_0\;.$

\vspace{1mm}
\noindent
We will start by proving the first assertion. Let $x\in \partial\Omega_{\epsilon}$. Since $\epsilon\in(0,\frac{\epsilon_0}{3}]$, there exists a unique $\tilde{x}\in\partial\Omega$ such that $d(x)=|x-\tilde{x}|$. By the $C^{3+\alpha}$-regularity of $\partial\Omega$, there exists an $r>0$ and a function $\phi\in C^{3+\alpha}$ such that $\partial\Omega\cap B(\tilde{x},r)=\{ (x_1,...,x_d)\in B(\tilde{x},r) : x_d=\phi(x_1,...,x_{d-1})\}$. So, $\partial\Omega\cap B(\tilde{x},r)$ is the zero set of  $$g(x_1,...,x_d)=x_d-\phi(x_1,...,x_{d-1})\text{, where }g\in C^{3+\alpha}\text{ with }||\phi||_{3+\alpha}\leq M_0.$$

\noindent
Also, note that, there exists an $r'>0$ such that $B(x,r')\subseteq \Omega \setminus \Omega_{\epsilon_0}$ and for every $y\in B(x,r)\cap \partial \Omega_{\epsilon}$ we have that $y-\epsilon \nu (y)\in B(\tilde{x},r)\cap \partial\Omega$. So, we can define the function
$$g_{\epsilon}:B(x,r')\rightarrow \mathbb{R}\text{ with } g_{\epsilon}(y):=g(y-\epsilon\nu(y)).$$
It is easy to see that $B(x,r')\cap \partial \Omega_{\epsilon}$ is contained in the zero set of $g_{\epsilon}$. In addition, $||g_{\epsilon}||_{2+\alpha}$ can be bounded in terms of $||g||_{2+\alpha}$ and $||\nu||_{2+\alpha}$. Since, $||g||_{2+\alpha}$ can be bounded in terms of $M_0$, independently of $\tilde{x}$, we deduce that $||g_{\epsilon}||_{2+\alpha}\leq M,$ where $M$ depends only $M_0$, $||\nu||_{2+\alpha}$ and the domain $\Omega$ . Finally, by a simple inverse function theorem argument, we can deduce that $B(x,r')\cap \partial \Omega_{\epsilon}$, as the zero set of $g_{\epsilon}$, can be expressed as a graph.
\vspace{2mm}

\noindent
To prove the second statement, let $\epsilon\in(0,\frac{\epsilon_0}{3}]$ and $x\in\partial \Omega_{\epsilon}$. We need to find an $r$ such that the ball $B(x+r\nu(x),r)$ stays inside $\Omega_{\epsilon}$. Again, since $\epsilon\in(0,\frac{\epsilon_0}{3}]$, there exists a unique $\tilde{x}\in\partial\Omega$ such that $d(x)=|x-\tilde{x}|=\epsilon$. Without loss of generality, we may assume that $\tilde{x}=0$, thus $x=\epsilon\nu (0)$ and $x+r\nu(x)=(\epsilon+r)\nu (0)$. We claim that if $r=\frac{\epsilon_0}{3}$, then 
$$B\left(\left(\epsilon+\frac{\epsilon_0}{3}\right)\nu(0),\frac{\epsilon_0}{3}\right)\subset \Omega_{\epsilon}\text{  and  }B\left(\left(\epsilon+\frac{\epsilon_0}{3}\right)\nu(0),\frac{\epsilon_0}{3}\right)\cap \partial\Omega_{\epsilon}=\{x\}=\{\epsilon\nu(0)\}\;,$$
which is exactly what we wanted to show.

\noindent
Indeed, let $y_0\in \partial\Omega_{\epsilon}$. There exists a unique $y\in\partial\Omega$ such that $d_{\Omega}(y_0)=|y-y_0|=\epsilon$. Then, since $\epsilon+\frac{\epsilon_0}{3}<\epsilon_0$,
\begin{align*}
    \left|y_0-(\epsilon+\frac{\epsilon_0}{3})\nu(0)\right|& \geq 
 \left|y-(\epsilon+\frac{\epsilon_0}{3})\nu(0)\right|-|y_0-y| \\
 &\geq \epsilon+\frac{\epsilon_0}{3}-\epsilon=\frac{\epsilon_0}{3}\; ,
 \end{align*}
 with equality if and only if $y_0=x=\epsilon\nu(0)$. Thus, $B\left(\left(\epsilon+\frac{\epsilon_0}{3}\right)\nu(0),\frac{\epsilon_0}{3}\right)$ intersects $\partial\Omega_{\epsilon}$ only at $x=\epsilon\nu(0)$. Finally, it is easy to see that $B\left(\left(\epsilon+\frac{\epsilon_0}{3}\right)\nu(0),\frac{\epsilon_0}{3}\right)\subset \Omega_{\epsilon}\setminus\Omega_{\epsilon_0}\subset \Omega_{\epsilon}$. The result follows.
 \end{proof}

\vspace{7mm}

\noindent
In [2], Ricciardi solves the \textbf{ME} in a bounded domain with Neumann conditions by using Lemmata 3.1, 3.2 and some parabolic estimates from [4] and [6]. Throughout this note we will be using these parabolic estimates for the domains $\Omega_{\epsilon}$, $0<\epsilon<\frac{\epsilon_0}{3}$. Going carefully through their proof, which relies heavily on propositions from [4] and [6], we deduce that the constants in them depend on the domain $\Omega$ in a way that they can be chosen to be independent of $\epsilon$, due to Proposition \ref{domains}. More specifically, we have the following:

\vspace{2mm}

\begin{rem} The constants in the estimates from [2] depend on 
\begin{itemize}
    \item The dimension.
    \item The coefficients of the equations.
    \item The radius in the uniform interior ball condition.
    \item $M$ from Proposition  \ref{domains}.
    \item $\alpha$
\end{itemize}
and the constants are bounded by a polynomial function of the maximum of these quantities.
By Proposition \ref{domains} and our assumptions, all the above can be bounded independently of $\epsilon$, so the constants in the estimates we will be using from [2] are uniformly bounded.

\end{rem}

\vspace{5mm}

\subsection{Extension Theorems}

\noindent
In order to be able to use the estimates from [2], the terminal data of the parabolic problems should satisfy the Neumann compatibility conditions. To achieve that, we are going to make use of Theorem 0.3.2 from [6]. We, hence, state the following result.

\vspace{4mm}
\noindent

\begin{prop} \label{extension}
 Suppose that a domain $\Omega\subseteq \mathbb{R}^n$ has a boundary of class $C^{2+\alpha}$. Then, for any function $f\in C^{1+\alpha}(\partial\Omega)$, there exists a function $\mathcal{N}f\in C^{2+\alpha}(\bar{\Omega})$ such that 
$$\frac{\partial}{\partial( a\nu)}\mathcal{N}f=f$$
i.e,  $aD_x\mathcal{N}f\cdot \nu|_{\partial\Omega}=f$ on $\partial\Omega$. Furthermore, there exists a constant $C$ such that
$$||\mathcal{N}f||_{2+\alpha}\leq C||f||_{1+\alpha}.$$

\end{prop}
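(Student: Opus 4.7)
The plan is to construct $\mathcal{N}f$ by localization and an explicit Whitney-type lifting, gaining one full derivative of regularity by using the distance to the boundary as a small parameter.

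First I would set up local coordinates. By compactness of $\partial\Omega$ and the $C^{2+\alpha}$ assumption, cover $\partial\Omega$ by finitely many charts in each of which a $C^{2+\alpha}$ diffeomorphism $\Phi$ straightens the boundary to $\{y_n=0\}$, identifying a one-sided neighborhood of the patch with the half-space $\{y=(y',y_n):y_n\geq 0\}$. In these coordinates the conormal operator $u\mapsto aDu\cdot\nu$ pulls back to a first-order operator
\[
\mathcal{L}u = b_n(y')\,\partial_{y_n}u + \sum_{i<n}b_i(y')\,\partial_{y_i}u \quad \text{at } y_n=0,
\]
with $C^{1+\alpha}$ coefficients. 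The leading coefficient $b_n(y')=(a\nu\cdot\nu)\circ\Phi^{-1}|_{y_n=0}$ is uniformly bounded below by $\lambda>0$ from the ellipticity hypothesis, so $b_n$ is invertible in $C^{1+\alpha}$.

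Next I would reduce to prescribing only a normal derivative. If I look for $u$ with $u|_{y_n=0}=0$, then the tangential components $b_i\partial_{y_i}u$ vanish on the flat boundary (since $u$ does), so the local problem becomes: given $h=f\circ\Phi^{-1}/b_n \in C^{1+\alpha}(\mathbb{R}^{n-1})$ (extended to be compactly supported by a standard Whitney extension, preserving the $C^{1+\alpha}$-norm up to a constant), find $u\in C^{2+\alpha}(\{y_n\geq 0\})$ with $u|_{y_n=0}=0$ and $\partial_{y_n}u|_{y_n=0}=h$.

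The key step is the explicit construction
\[
u(y',y_n) := y_n\,(\eta_{y_n}*h)(y'), \qquad \eta_t(z):=t^{-(n-1)}\eta(z/t),
\]
where $\eta$ is a smooth, compactly supported mollifier on $\mathbb{R}^{n-1}$. Clearly $u$ vanishes at $y_n=0$ and $\partial_{y_n}u(y',0)=h(y')$. The point is that each extra tangential derivative of $\eta_{y_n}*h$ costs a factor $y_n^{-1}$ in sup-norm, which is exactly compensated by the prefactor $y_n$; dimensionally this trades one unit of decay in $y_n$ for one derivative in $y'$. A direct estimation of the $y'$-derivatives of the mollification at scale $y_n$ and of the Hölder seminorms by a scaling/case split (comparing $|y^1-y^2|$ to $\min(y_n^1,y_n^2)$) gives
\[
\|u\|_{C^{2+\alpha}(\{y_n\geq 0\})}\ \leq\ C\,\|h\|_{C^{1+\alpha}(\mathbb{R}^{n-1})}.
\]

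Finally I would assemble the global $\mathcal{N}f$: use a smooth partition of unity subordinate to the atlas to sum the local extensions $u$ pulled back by $\Phi^{-1}$, then multiply by a cutoff supported in a tubular neighborhood of $\partial\Omega$ to extend by zero to all of $\overline{\Omega}$. The $C^{2+\alpha}$-norm bound follows from the finitely many local estimates, the $C^{2+\alpha}$-bounds on the $\Phi$'s, and the uniform lower bound on $b_n$. The main obstacle is the third step: verifying that the mollification at scale $y_n$ yields genuinely $C^{2+\alpha}$ regularity up to the boundary. This requires the delicate Whitney-type splitting of $|D^2u(y^1)-D^2u(y^2)|$ into regimes where the two points are close relative to their distance to $\{y_n=0\}$ versus far, and keeping track of the cancellations coming from $\int\eta=1$ and $\int\partial\eta=0$.
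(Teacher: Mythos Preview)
Your argument is essentially correct and follows the classical trace-lifting technique (localize, straighten, and use the Whitney-type extension $u(y',y_n)=y_n(\eta_{y_n}*h)(y')$ to gain one derivative). The paper, however, does not prove this proposition at all: it merely invokes Theorem~0.3.2 of Lunardi~[6] and states the result without argument. So there is nothing to compare at the level of technique---you have supplied a proof where the paper only gives a citation.

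A couple of small points to tighten in your sketch. First, after straightening, the coefficients of the pulled-back conormal operator generally depend on all of $y$, not only on $y'$; what you use is only their trace at $y_n=0$, so you should write $b_i(y',0)$ (this does not affect the argument since you only impose the boundary condition there). Second, when you assert $\partial_{y_n}u(y',0)=h(y')$, you need the extra term $y_n\,\partial_{y_n}(\eta_{y_n}*h)$ to vanish at $y_n=0$; this uses that $\int\partial_{y_n}\eta_{y_n}\,dz=0$ together with the H\"older modulus of $h$, and is part of the same Whitney-type estimate you flag as the ``main obstacle.'' Both are standard and your outline handles them, but it is worth saying explicitly. The globalization via partition of unity and the norm estimate are routine once the half-space case is in hand.
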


\vspace{5mm}
\begin{rem} (i) In the subdomains $\Omega_{\epsilon}$, we will denote the operator given by Proposition \ref{extension} by $\mathcal{N}_{\epsilon}$. 

\noindent
(ii) For any $\epsilon\in [0,\frac{\epsilon_0}{3})$, by the construction of the operator $\mathcal{N}_{\epsilon}$, $C$ will be independent of $\epsilon$ as well.
\end{rem}

\vspace{5mm}

\subsection{Wasserstein distance}
Suppose that $m_1,m_2\in\mathcal{P}(\Omega)$. Later in the paper, we are going to need to compare $\bm{d_1}(m_1,m_2)$ and $\bm{d_1}^{\epsilon}(m_1,m_2)$ (Remark 2.2). A useful estimate is given by the following Proposition
\begin{prop} \label{distances}
Let $m_1,m_2\in\mathcal{P}(\Omega)$ and $\epsilon\in (0,\frac{\epsilon_0}{3})$. Then, there exists a constant $C>0$ that depends on $\Omega$ such that
$$\bm{d_1}^{\epsilon}(m_1,m_2)\leq \bm{d_1}(m_1,m_2)+C(m_1+m_2)(\Omega\setminus\Omega_{\epsilon}).$$
In particular,
$$\limsup_{\epsilon\rightarrow 0}\bm{d_1}^{\epsilon}(m_1,m_2)\leq \bm{d_1}(m_1,m_2).$$
\end{prop}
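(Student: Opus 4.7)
The plan is to produce, for any $1$-Lipschitz test function $\phi$ on $\Omega_\epsilon$, a $1$-Lipschitz competitor $\widetilde\phi$ on the full domain $\Omega$ via a McShane (Kirszbraun) extension, and then absorb the error coming from the collar $\Omega\setminus\Omega_\epsilon$ directly into the total mass $(m_1+m_2)(\Omega\setminus\Omega_\epsilon)$.

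Concretely, pick any $\phi\colon\Omega_\epsilon\to\mathbb{R}$ with $\mathrm{Lip}(\phi)\le 1$ competing in the sup defining $\bm{d_1}^\epsilon(m_1,m_2)$; by subtracting the constant $\phi(x_0)$ for some fixed $x_0\in\Omega_\epsilon$, I may assume without loss of generality that $\phi(x_0)=0$, so that $|\phi|\le\mathrm{diam}(\Omega)=:C$ on $\Omega_\epsilon$. McShane's extension theorem then yields a $1$-Lipschitz $\widetilde\phi$ on $\overline\Omega$, and truncating $\widetilde\phi$ at $\pm C$ preserves both the Lipschitz bound and the sup bound $\|\widetilde\phi\|_\infty\le C$. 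Next I split
$$\int_{\Omega_\epsilon}\phi\,d(m_1-m_2)\;=\;\int_\Omega\widetilde\phi\,d(m_1-m_2)\;-\;\int_{\Omega\setminus\Omega_\epsilon}\widetilde\phi\,d(m_1-m_2).$$
The first term on the right is $\le \bm{d_1}(m_1,m_2)$ by the very definition of $\bm{d_1}$, since $\widetilde\phi$ is a $1$-Lipschitz competitor on $\Omega$; the second is bounded in absolute value by $C\,(m_1+m_2)(\Omega\setminus\Omega_\epsilon)$ via the uniform sup bound together with the triangle inequality for total variation. Taking the supremum over $\phi$ delivers the first inequality of the proposition, with $C=\mathrm{diam}(\Omega)$ depending only on $\Omega$.

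The $\limsup$ statement then follows from continuity of the finite Borel measure $m_1+m_2$ on the decreasing sequence $\Omega\setminus\Omega_\epsilon$: since $\Omega$ is open, $\bigcap_{\epsilon>0}(\Omega\setminus\Omega_\epsilon)=\emptyset$, so $(m_1+m_2)(\Omega\setminus\Omega_\epsilon)\to 0$, and the first inequality passes to the $\limsup$. The main point requiring care is the legitimacy of the normalization $\phi(x_0)=0$: a uniform constant shift $\phi\mapsto\phi+c$, $\widetilde\phi\mapsto\widetilde\phi+c$ alters both sides of the splitting identity by the same amount, because $(m_1-m_2)(\Omega)=0$ forces $(m_1-m_2)(\Omega_\epsilon)=-(m_1-m_2)(\Omega\setminus\Omega_\epsilon)$, so the identity persists and the collar estimate is insensitive to the shift. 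This is the only place where it is essential that $m_1,m_2$ be full probability measures rather than sub-probability measures of unequal total mass, and it is the subtlety I expect to need the most attention when writing the argument out in detail.
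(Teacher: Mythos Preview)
Your proof is correct and follows essentially the same route as the paper: normalize the test function at a point, extend it to $\Omega$ as a $1$-Lipschitz function (the paper cites Kirszbraun, you cite McShane---the same theorem for scalar functions), split the integral over $\Omega_\epsilon$ into an integral over $\Omega$ minus the collar, and bound the collar piece by $\mathrm{diam}(\Omega)\cdot(m_1+m_2)(\Omega\setminus\Omega_\epsilon)$. Your additional truncation step is harmless but unnecessary, since the Lipschitz extension of a function vanishing at $x_0$ already satisfies $|\widetilde\phi(x)|\le|x-x_0|\le\mathrm{diam}(\Omega)$; and your explicit discussion of the normalization and of the $\limsup$ via continuity of measure is more thorough than the paper's, which simply asserts both without comment.
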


\begin{proof}
Let $\phi:\Omega_{\epsilon}\rightarrow \mathbb{R}$ be a Lipschitz continuous function with $\text{Lip}(\phi)\leq 1$ and $x_0\in \Omega_{\epsilon}$. By adding a constant, we may assume that $\phi(x_0)=0$.

\noindent
By the Kirszbraun theorem ([15]) we can extend $\phi$ to a function $\tilde{\phi}:\Omega\rightarrow\mathbb{R}$ with $\text{Lip}(\tilde{\phi})=\text{Lip}(\phi)\leq 1$. So, 

$$\left|\int_{\Omega\setminus\Omega_{\epsilon}}\tilde{\phi}(m_1-m_2)\right|\leq \int_{\Omega\setminus\Omega_{\epsilon}}|x-x_0|(m_1+m_2)dx\leq \text{diam}(\Omega)\cdot(m_1+m_2)(\Omega\setminus\Omega_{\epsilon}).$$
Using this estimate we get
$$\int_{\Omega_{\epsilon}}\phi (m_1-m_2)=\int_{\Omega}\tilde{\phi} (m_1-m_2)-\int_{\Omega\setminus\Omega_{\epsilon}}\tilde{\phi} (m_1-m_2)$$
$$\leq \bm{d_1}(m_1,m_2)+\text{diam}(\Omega)\cdot(m_1+m_2)(\Omega\setminus\Omega_{\epsilon}).$$
Taking supremum over $\phi$ we get
$$\bm{d_1}^{\epsilon}(m_1,m_2)\leq\bm{d_1}(m_1,m_2)+\text{diam}(\Omega)\cdot(m_1+m_2)(\Omega\setminus\Omega_{\epsilon}),$$
which is what we wanted to prove.
\end{proof}

\vspace{7mm}

\addtocontents{toc}{\protect\setcounter{tocdepth}{2}}

\section{The Mean Field Game System}

\noindent
In [1], Porretta and Ricciardi prove the existence and the uniqueness of a solution to the mean field game system (MFG for short):
$$\bm{(MFG)}: \begin{cases}
-\partial_tu-\text{tr}(a(x)D^2u)+H(x,Du)=F(x,m(t)),\text{ in }(0,T)\times\Omega,\\
\partial_t m-\text{div}(a(x)Dm)-\text{div}\left(m (H_p(x,Du)+\tilde{b}(x))\right)=0,\text{ in }(0,T)\times\Omega,\\
u(T,x)=G(x,m(T)),\quad m(t_0)=m_0\; ,\\
\end{cases}$$
provided that (\ref{PR}) holds and $m_0\in L^1(\Omega)$, $m_0\geq 0$. Here, we wrote the second equation in divergence form and
$$\tilde{b}_i(x)= \sum_{j=1}^d\frac{\partial a_{ji}}{\partial x_j}(x),\quad\quad i=1,...,d\;,$$
so that $\text{tr}(a(x)D^2u)=\text{div}(a(x)Du)-\tilde{b}(x)\cdot Du$ and $\sum_{i,j}\partial_{i,j}^2(a_{ij}(x)m)=\text{div}(a(x)Dm)+\text{div}(\tilde{b}(x)m)$.

\noindent
In addition, again when $m_0\in L^1(\Omega),m_0\geq 0$, after an easy modification in the last step of their proof of Proposition 4.3 ([1]), we deduce the existence and the uniqueness of a weak solution to the Fokker-Planck equation :
\begin{equation}\label{FP}
    \begin{cases}\partial_tm -\sum_{i,j}\partial^2_{ij}(a_{ij}(x)m)-\text{div}(\alpha m)=0,\text{ in }[t_0,T]\times\Omega ,\\
m(t_0)=m_0,
\end{cases}
\end{equation}
where $\alpha :Q_T\rightarrow\mathbb{R}^d$ is bounded function satisfying
$$\text{tr}(a(x)D^2d(x))-\alpha(x,t)\cdot Dd(x)\geq \frac{a(x)Dd(x)\cdot Dd(x)}{d(x)}-Cd(x)$$
for all $x\in \Omega\setminus \Omega_{\delta_0}$ and $t\in [0,T]$, in the sense of the following definition.
\begin{defn}
We say that $m\in C([0,T];\mathcal{P}(\Omega)) $ is a weak solution to (\ref{FP}) if for every $t\in [0,T]$ and each $\varphi\in C([0,t]; L^1(\Omega))\cap L^{\infty}([0,t]\times\Omega)$ such that $\varphi$ satisfies
$$\begin{cases}
-\partial_t\varphi -\text{tr}(a(x)D^2\varphi)+\alpha\cdot D\varphi\in L^{\infty}([0,t]\times\Omega)\; ,\\
\varphi(t)=\psi\in L^{\infty}(\Omega)\;,
\end{cases}$$
in the sense of distributions, the weak formulation holds:
$$\int_0^t\int_{\Omega}m(-\partial_t\varphi -\text{tr}(a(x)D^2\varphi)+\alpha\cdot D\varphi)\;dxds=\int_{\Omega}m_0\varphi (0)\;dx-\int_{\Omega}m(t)\psi \;dx.$$
\end{defn}

\vspace{7mm}

\noindent
In this section, we will generalize these results for initial measures $m_0\in \mathcal{P}(\Omega)$ and, using our assumptions on $F,G$ and $H$, we will establish some extra regularity properties for the solutions. The existence of a solution to the \textbf{(MFG)} system and its regularity properties, when $m_0\in L^1(\Omega)$, are summarized in the following proposition:

\vspace{4mm}

\begin{prop} \label{MFGL1}
Suppose that Hypotheses 2.3 are satisfied and $m_0\in L^1(\Omega)\cap\mathcal{P}(\Omega)$. There exists a solution $(u,m)\in C^{1+\frac{\alpha}{2},2+\alpha}(Q_T)\times C([0,T];L^1(\Omega))$ of \textbf{(MFG)}, where $u$ satisfies the first equation in the classical sense, while $m$ satisfies the second equation in the sense of Definition 4.1. \end{prop}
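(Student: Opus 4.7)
The strategy is to obtain $(u,m)$ as the limit of the classical solutions $(u^\epsilon, m^\epsilon)$ of the approximating Neumann problems on the subdomains $\Omega_\epsilon$ introduced in the Summary of Main Ideas, and then to transfer the Hölder regularity of $u^\epsilon$ to the limit via uniform-in-$\epsilon$ parabolic estimates. Existence and the regularity $u^\epsilon\in C^{1+\alpha/2,2+\alpha}(Q_T^\epsilon)$ for each $\epsilon>0$ is provided directly by the Neumann MFG theory of [2]; the purpose of subtracting $\mathcal N_\epsilon(aD_xG(\cdot,m^\epsilon(T)))$ from the terminal datum is precisely to enforce the Neumann compatibility condition $aD_xu^\epsilon\cdot\nu=0$ on $\partial\Omega_\epsilon$, and Hypothesis (5) together with the $\epsilon$-independent bound from Proposition \ref{extension} guarantees that this correction stays uniformly controlled in $C^{2+\alpha}$.

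The next step is to show that the data of the HJB equation for $u^\epsilon$ are controlled in parabolic Hölder norms uniformly in $\epsilon$. Under the invariance condition (\ref{PR}), the Porretta--Ricciardi results from [1] supply uniform $L^\infty$ bounds on $Du^\epsilon$ and compactness of $m^\epsilon$ in $C([0,T];L^1(\Omega))$ with limit $m$ satisfying the Fokker--Planck equation in the sense of Definition 4.1. The bound on $Du^\epsilon$ gives time equicontinuity of $m^\epsilon$ with respect to the generalized Wasserstein distance $\bm{d_1}^\epsilon$ (by testing the Fokker--Planck equation against Lipschitz functions), which by Proposition \ref{distances} transfers to $\bm{d_1}$-equicontinuity of $m$. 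Combined with the Lipschitz bound on $\frac{\delta F}{\delta m}$ in Hypothesis (3), this yields a uniform $C^{0,\alpha}$ bound of $F(\cdot,m^\epsilon(\cdot))$ in both variables (the spatial Hölder bound being built into Hypothesis (3) directly); Hypothesis (4) handles the modified terminal datum analogously.

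With these uniform bounds in hand, parabolic Schauder theory applied to the HJB equation for $u^\epsilon$, whose constants are independent of $\epsilon$ by Remark 3.5 and Proposition \ref{domains}, produces $\|u^\epsilon\|_{\frac{2+\alpha}{2},2+\alpha}\le C$ uniformly in $\epsilon$. An Arzelà--Ascoli argument then extracts a subsequence converging in $C^{1,2}_{\mathrm{loc}}(Q_T)$ to a function $u\in C^{1+\alpha/2,2+\alpha}(Q_T)$ solving the HJB equation classically, and pointwise convergence of every term allows one to pass to the limit in the weak formulation of the Fokker--Planck equation to recover $m$. The main obstacle is precisely the uniform-in-$\epsilon$ Hölder-in-$t$ control of $F(\cdot,m^\epsilon)$: it requires the chain \emph{Wasserstein equicontinuity of $m^\epsilon$} $\Rightarrow$ \emph{comparison between $\bm{d_1}^\epsilon$ and $\bm{d_1}$} (Proposition \ref{distances}) $\Rightarrow$ \emph{Lipschitz estimate on $\frac{\delta F}{\delta m}$} to be carried out with constants that do not degenerate as $\epsilon\to 0^+$, and it is at this step that both the uniformity built into Hypothesis (5) and the $L^1$ assumption on $m_0$ (rather than just $m_0\in\mathcal P(\Omega)$) are essential.
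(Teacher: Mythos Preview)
Your overall strategy is exactly the paper's: approximate by the Neumann MFG system \eqref{MFGep} on $\Omega_\epsilon$, obtain a uniform $C^{1+\alpha/2,2+\alpha}$ bound on $u^\epsilon$, extract a limit via Arzel\`a--Ascoli, and couple with the convergence $m^\epsilon\to m$ in $C([0,T];L^1(\Omega))$ supplied by Theorem~5.2 of [1]. The difference is one of packaging rather than substance. The paper does not go through the chain ``Wasserstein equicontinuity of $m^\epsilon$ $\Rightarrow$ H\"older-in-$t$ of $F(\cdot,m^\epsilon(\cdot))$ $\Rightarrow$ Schauder'' at all: it simply quotes Proposition~3.3 of [2], which is already the full Neumann MFG well-posedness statement and delivers $\|u^\epsilon\|_{1+\alpha/2,2+\alpha}\le C(\|F\|_{\alpha/2,\alpha}+\|G-\mathcal N_\epsilon(aD_xG)\|_{2+\alpha})$ with a constant depending on $\Omega_\epsilon$ only through the quantities in Proposition~\ref{domains}/Remark~3.4, hence uniformly in $\epsilon$. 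The H\"older-in-$t$ control of $F(\cdot,m^\epsilon)$ that you isolate as ``the main obstacle'' is absorbed inside that cited result; the only genuinely new ingredient the paper needs is the observation that $\|aD_xG(\cdot,m^\epsilon(T))\cdot\nu|_{\partial\Omega_\epsilon}\|_{1+\alpha}\to 0$ by Hypothesis~(5), which keeps the correction term $\mathcal N_\epsilon(\cdot)$ uniformly bounded and vanishing.

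Two small points about your write-up: (i) your appeal to Proposition~\ref{distances} goes in the wrong direction---that proposition bounds $\bm{d_1}^\epsilon$ above by $\bm{d_1}$, not conversely; for measures supported in $\Omega_\epsilon$ the two distances in fact coincide (restrict, or extend via Kirszbraun), so no comparison lemma is needed there; (ii) the ``uniform $L^\infty$ bounds on $Du^\epsilon$'' you attribute to [1] are really a byproduct of Proposition~3.3 of [2] (plus Remark~3.4), which is also where the Schauder bound comes from---so invoking them as input to a second Schauder argument is redundant. None of this is a gap, just an over-elaboration of what in the paper is a two-line citation.
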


\begin{proof}

\noindent
We first assume that $m_0\in L^1(\Omega)$. Let $(u^{\epsilon},m^{\epsilon})$ be a solution of the problem:

\begin{equation} \label{MFGep}
\begin{cases}
-\partial_t u^{\epsilon}-\text{tr}(a(x)D^2u^{\epsilon})+H(x,Du^{\epsilon})=F(x,m^{\epsilon})\quad ,\text{ in } (0,T)\times \Omega_{\epsilon},\\
\partial_t m^{\epsilon}-\text{div}(a(x)Dm^{\epsilon})-\text{div}\left(m^{\epsilon}(H_p(x,Du^{\epsilon})+\tilde{b}(x))\right)=0\quad ,\text{ in } (0,T)\times\Omega_{\epsilon},\\
m^{\epsilon}(0)=m_0 ,\quad u^{\epsilon}(T,x)=G(x,m^{\epsilon}(T))-\mathcal{N}_{\epsilon}(a(\cdot)D_xG(\cdot,m^{\epsilon}(T)))(x),\\
a(x)Du^{\epsilon}\cdot \nu(x)|_{\partial \Omega_{\epsilon}}=0, \quad \left(a(x)Dm^{\epsilon}+m^{\epsilon}(H_p(x,Du^{\epsilon})+\tilde{b}(x))\right)\cdot \nu(x)|_{\partial \Omega_{\epsilon}}=0,
\end{cases};
\end{equation}

\noindent
given by Proposition 3.3 of [2]. We notice that the terminal function $G(x,m^{\epsilon}(T))-\mathcal{N}_{\epsilon}(a(x)D_xG(\cdot,m^{\epsilon}(T)))$ satisfies the compatibility condition on the boundary of $\Omega_{\epsilon}$, so, by Proposition 3.3 of [2], we also have the bound
\begin{align*}
||u^{\epsilon}||_{1+\frac{\alpha}{2},2+\alpha}&\leq C(||F||_{\frac{\alpha}{2},\alpha}+||G-\mathcal{N}_{\epsilon}(a(\cdot)D_xG(\cdot,m^{\epsilon}(T)))||_{2+\alpha})\\
&\leq C(||F||_{\frac{\alpha}{2},\alpha}+||G||_{2+\alpha}+||a(\cdot)D_xG(\cdot,m^{\epsilon}(T))\cdot \nu|_{\partial\Omega_{\epsilon}}||_{1+\alpha}) ,\quad\text{ in }\Omega_{\epsilon},
\end{align*}
\vspace{2mm}
where $C$ comes from Theorem IV.5.3 of [4] and Proposition \ref{extension}, hence it is independent of $\epsilon$ (Proposition \ref{domains} and Remark 3.6). Also, note that
\vspace{1mm}

$$||a(\cdot)D_xG(\cdot,m^{\epsilon}(T))\cdot \nu|_{\partial\Omega_{\epsilon}}||_{1+\alpha}\rightarrow ||a(\cdot)D_xG(\cdot,m(T))\cdot \nu|_{\partial\Omega}||_{1+\alpha}=0\text{ as }\epsilon\rightarrow 0,$$
\vspace{1mm}

\noindent
so $u^{\epsilon}(T,\cdot)\rightarrow G(m(T),\cdot)$ and $||a(\cdot)D_xG(\cdot,m^{\epsilon}(T))\cdot \nu|_{\partial\Omega_{\epsilon}}||_{1+\alpha}$ is uniformly bounded. Thus, $||u^{\epsilon}||_{1+\frac{\alpha}{2},2+\alpha}$ is uniformly bounded and, by Arzela-Ascoli, we have
$$u^{\epsilon}\rightarrow u,\text{ locally uniformly in } C^{1,2}(Q_T),$$
up to subsequence, and $u\in C^{1+\frac{\alpha}{2},2+\alpha}(Q_T)$ with $||u||_{1+\frac{\alpha}{2},2+\alpha}\leq C$. In addition, by Theorem 5.2 in [1], we know, in addition, that as $\epsilon\rightarrow 0$
$$(u^{\epsilon},m^{\epsilon})\rightarrow (u,m)\quad \text{ in } C(0,T; W^{1,2}_{loc}(\Omega))\times C([0,T];L^1(\Omega))\;$$
with $m$ satisfying the second equation of \textbf{(MFG)} in the sense of Definition 4.1. Passing to the limit, as in Theorem 5.2 of [1], we deduce that $u$ is a classical solution to the first equation of the \textbf{(MFG)} system with the claimed regularity, while $m$ is a solution to the second equation in the sense of Definition \ref{FP}.
\end{proof}
\vspace{2mm}

\begin{prop} \label{5.6.[2]}
Suppose that Hypotheses 2.3 are satisfied. Let $(u_1,m_1)$ and $(u_2,m_2)$ be two solutions of \textbf{(MFG)} with initial conditions $m_{01},m_{02}\in L^1(\Omega)\cap\mathcal{P}(\Omega)$. Then there exists a constant $C>0$ such that 
$$||m_1-m_2||_{L^p(Q_T)}\leq C\bm{d_1}(m_{01},m_{02})\;.$$
\end{prop}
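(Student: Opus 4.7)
The plan is to deduce this stability estimate from its Neumann counterpart (Proposition 5.6 of [2]) via the $\epsilon$-approximation procedure already used in Proposition \ref{MFGL1}. The strategy is essentially: prove the estimate on $\Omega_\epsilon$ with constants uniform in $\epsilon$, then pass to the limit.

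First, for $i=1,2$ and $\epsilon \in (0, \epsilon_0/3)$, I would take the approximating Neumann solutions $(u_i^\epsilon, m_i^\epsilon)$ of system (\ref{MFGep}) on $\Omega_\epsilon$, with initial data $m_{0i}|_{\Omega_\epsilon}$, as supplied in the proof of Proposition \ref{MFGL1}. Recall that, up to a subsequence, $m_i^\epsilon \to m_i$ in $C([0,T]; L^1(\Omega))$ (extending by zero outside $\Omega_\epsilon$), so in particular pointwise a.e. on $Q_T$ along a further subsequence.

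Second, I would apply Proposition 5.6 of [2] directly to the pair $(m_1^\epsilon, m_2^\epsilon)$ on $\Omega_\epsilon$. The crucial point is that the constants produced by that result depend only on the quantities listed in Remark 3.6, all of which are uniformly controlled in $\epsilon$ thanks to Proposition \ref{domains}. This yields a constant $C>0$ independent of $\epsilon$ with
$$\|m_1^\epsilon - m_2^\epsilon\|_{L^p(Q_T^\epsilon)} \leq C\,\bm{d_1}^\epsilon(m_{01}|_{\Omega_\epsilon}, m_{02}|_{\Omega_\epsilon}).$$
Then, by Proposition \ref{distances} (and the fact that restricting a sub-probability to $\Omega_\epsilon$ only changes the Wasserstein distance by the mass in the boundary layer),
$$\bm{d_1}^\epsilon(m_{01}|_{\Omega_\epsilon}, m_{02}|_{\Omega_\epsilon}) \leq \bm{d_1}(m_{01}, m_{02}) + C(m_{01}+m_{02})(\Omega \setminus \Omega_\epsilon),$$
and since $m_{0i} \in L^1(\Omega)$, dominated convergence gives $(m_{01}+m_{02})(\Omega \setminus \Omega_\epsilon) \to 0$ as $\epsilon \to 0^+$.

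Finally, I would pass to the limit using lower semicontinuity. For any compact $K \Subset \Omega$, one has $K \subset \Omega_\epsilon$ for all sufficiently small $\epsilon$, and Fatou's lemma applied to the a.e.\ convergence $m_i^\epsilon \to m_i$ gives
$$\|m_1 - m_2\|_{L^p(K \times [0,T])} \leq \liminf_{\epsilon \to 0^+} \|m_1^\epsilon - m_2^\epsilon\|_{L^p(Q_T^\epsilon)} \leq C\,\bm{d_1}(m_{01}, m_{02}).$$
Taking the supremum over an exhausting sequence of compacta $K \nearrow \Omega$ yields the estimate on all of $Q_T$.

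The main obstacles are bookkeeping rather than conceptual: (i) verifying that the constant in the Neumann estimate of [2] is genuinely uniform in $\epsilon$ (which is the content of Remark 3.6 combined with Proposition \ref{domains}), and (ii) handling the mild mismatch that $m_{0i}|_{\Omega_\epsilon}$ is a sub-probability rather than a probability, which is precisely what Proposition \ref{distances} is designed to absorb via the vanishing boundary-layer mass. Once these are in place the limit passage is routine.
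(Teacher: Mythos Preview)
Your proposal is correct and follows essentially the same approach as the paper: approximate by the Neumann problems on $\Omega_\epsilon$, invoke the $L^p$ stability estimate from [2] with constants uniform in $\epsilon$, control $\bm{d_1}^\epsilon$ via Proposition \ref{distances}, and pass to the limit. The only cosmetic difference is in the limit step: the paper extracts weak $L^p$ limits $\widetilde{m_i}$ of $m_i^\epsilon$, identifies $\widetilde{m_i}=m_i$ by testing against $\mathrm{sign}(m_i-\widetilde{m_i})$, and then uses weak lower semicontinuity of the $L^p$-norm, whereas you use the already-known $C([0,T];L^1)$ convergence (hence a.e.\ along a subsequence) together with Fatou on an exhausting family of compacta.
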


\begin{proof}
Note that $m_{01},m_{02}\in L^1(\Omega)\subset C^{-(1+\alpha)}(\Omega)$. Consider $(u_1^{\epsilon},m_1^{\epsilon})$ and $(u_2^{\epsilon},m_2^{\epsilon})$ the solutions to (\ref{MFGep}) with initial conditions $m_{01},m_{02}$, respectively. Then, by Corollary 5.6 of [2], we have
\begin{equation} \label{5.6.1}
||m_1^{\epsilon}-m_2^{\epsilon}||_{L^p(Q_T^{\epsilon})}\leq C\bm{d_1}^{\epsilon}(m_{01},m_{02})
\end{equation}
and
\begin{equation}\label{5.6.2}
    ||m_i^{\epsilon}||_{L^p}\leq C||m_{0i}||_{C^{-(1+\alpha)}(\Omega_{\epsilon})}\leq C||m_{0i}||_{L^1(\Omega_{\epsilon})}\leq C\text{ for }i\in\{1,2\}.
\end{equation}
\vspace{1mm}

\noindent
It follows from (\ref{5.6.2}) that there exist $\widetilde{m_1},\widetilde{m_2}\in L^p(Q_T)$ such that 
$$m_i^{\epsilon}\rightharpoonup \widetilde{m_i}\text{ for }i\in \{1,2\},$$
weakly in $L^p$, up to subsequences. We claim that $\widetilde{m_1}=m_1$ and $\widetilde{m_2}=m_2$. Indeed, let $\phi:= \text{sign}(m_1-\widetilde{m_1})\in L^{\infty}(Q_T)$. Then, from the different types of convergences we have
$$\int_0^T\int_{\Omega_{\epsilon}}m_1^{\epsilon}\phi dxdt\rightarrow \int_0^T\int_{\Omega}m_1\phi dxdt$$
and
$$\int_0^T\int_{\Omega_{\epsilon}}m_1^{\epsilon}\phi dxdt\rightarrow \int_0^T\int_{\Omega}\widetilde{m_1}\phi dxdt.$$
Thus,
$$\int_0^T\int_{\Omega}m_1\phi dxdt=\int_0^T\int_{\Omega}\widetilde{m_1}\phi dxdt,$$
which, due to the choice of $\phi$, implies
$$\int_0^T\int_{\Omega}|m_1-\widetilde{m_1}|dxdt=0\Rightarrow \widetilde{m_1}=m_1$$
and, similarly, $\widetilde{m_2}=m_2$.
\vspace{1mm}

\noindent
We pass to the limit in (\ref{5.6.1}) and, because of Proposition \ref{distances} and the weak lower semicontinuity of the $L^p$-norm, it follows that
$$||m_1-m_2||_{L^p(Q_T)}\leq C\bm{d_1}(m_{01},m_{02}),$$
which is what we wanted to prove.
\end{proof}

\vspace{3mm}

\begin{thm}\label{MFG sol}
Assume Hypotheses 2.3 and $m_0\in \mathcal{P}(\Omega)$. Then, there exists a solution $(u,m)\in C^{1+\frac{\alpha}{2},2+\alpha}\times C([0,T];\mathcal{P}(\Omega))$ of \textbf{(MFG)}, where $u$ satisfies the first equation in the classical sense, while $m$ satisfies the second equation in the sense of Definition 4.1. Furthermore, if $m_0\in C^{2+\alpha}(\Omega)$ with compact support, then $m\in C^{1+\frac{\alpha}{2},2+\alpha}$.
\end{thm}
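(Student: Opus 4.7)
The plan is to combine a density argument with the stability estimate of Proposition \ref{5.6.[2]} to extend Proposition \ref{MFGL1} from $L^1$ initial data to arbitrary probability measures, and then to bootstrap the higher regularity of $m$ in the smoother, compactly supported case by applying Schauder estimates to the Neumann approximants of (\ref{MFGep}).

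First I would approximate $m_0\in \mathcal{P}(\Omega)$ by $m_0^n \in L^1(\Omega)\cap\mathcal{P}(\Omega)$ with $\bm{d_1}(m_0^n,m_0)\to 0$ (for instance, convolve with a standard mollifier on $\mathbb{R}^d$, restrict to $\Omega$, and renormalize). By Proposition \ref{MFGL1}, each $m_0^n$ produces a solution $(u^n,m^n)$ of \textbf{(MFG)} with a uniform bound $\|u^n\|_{1+\alpha/2,\,2+\alpha}\le C$. Proposition \ref{5.6.[2]} gives $\|m^n-m^k\|_{L^p(Q_T)}\le C\,\bm{d_1}(m_0^n,m_0^k)\to 0$, so $m^n\to m$ in $L^p(Q_T)$, while Arzel\`a--Ascoli extracts a subsequence $u^n\to u$ locally in $C^{1,2}(Q_T)$ with $u\in C^{1+\alpha/2,\,2+\alpha}(Q_T)$. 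I would then upgrade the convergence of $m^n$ to $C([0,T];\mathcal{P}(\Omega))$ by combining this $L^p$ convergence with a uniform Wasserstein equicontinuity-in-time estimate: testing the Fokker--Planck equation against any $1$-Lipschitz test function and using the uniform $L^\infty$ bounds on $H_p(\cdot,Du^n)+\tilde b$ and on $a$, one gets $\bm{d_1}(m^n(t),m^n(s))\le C|t-s|^{1/2}$. The passage to the limit in the HJB equation is routine from the local $C^{1,2}$ convergence together with the Wasserstein continuity of $F(x,\cdot)$, and in the Fokker--Planck equation it is carried out in the weak sense of Definition 4.1, producing the required $(u,m)$ with $m(t_0)=m_0$ and $u(T)=G(\cdot,m(T))$.

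For the additional regularity when $m_0\in C^{2+\alpha}(\Omega)$ has compact support, pick $\epsilon_1>0$ with $\mathrm{supp}(m_0)\subset \Omega_{\epsilon_1}$ and, for every $\epsilon<\epsilon_1$, consider the Neumann approximant $(u^\epsilon,m^\epsilon)$ from (\ref{MFGep}) starting at $m_0$. Since $m_0$ vanishes in a neighbourhood of $\partial\Omega_\epsilon$, the Neumann compatibility condition on the initial datum is automatic. The equation for $m^\epsilon$ is then a linear parabolic equation on $Q_T^\epsilon$ with coefficients built from $a$, $\tilde b$, and $H_p(\cdot,Du^\epsilon)$; by the uniform $C^{1+\alpha/2,\,2+\alpha}$-bound on $u^\epsilon$ from the proof of Proposition \ref{MFGL1} these coefficients are uniformly controlled in $C^{\alpha/2,\alpha}$. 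Applying Schauder estimates on $Q_T^\epsilon$, with constants uniform in $\epsilon$ thanks to Proposition \ref{domains} (and the uniformity comments used already in the proof of Proposition \ref{MFGL1}), yields $\|m^\epsilon\|_{1+\alpha/2,\,2+\alpha}\le C$ independently of $\epsilon$. Since $\Omega_\epsilon$ exhausts $\Omega$ as $\epsilon\downarrow 0$, Arzel\`a--Ascoli passes this uniform bound to the limit, giving $m\in C^{1+\alpha/2,\,2+\alpha}(Q_T)$.

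The step I expect to be the main obstacle is the uniform-in-time Wasserstein equicontinuity that upgrades the $L^p(Q_T)$ convergence of $m^n$ to genuine $C([0,T];\mathcal{P}(\Omega))$ convergence; without it one cannot legitimately pass to the limit in the initial trace $m(t_0)=m_0$ nor in the terminal condition $u(T)=G(\cdot,m(T))$, which depends on $m(T)$ through the Wasserstein-continuous map $G(\cdot,\cdot)$. The argument relies on the weak formulation of the Fokker--Planck equation tested against Lipschitz functions together with the uniform drift and diffusion bounds available from the proof of Proposition \ref{MFGL1}, which are precisely the ingredients needed to imitate the $1/2$-Hölder-in-time Wasserstein bound classical for diffusion processes with bounded coefficients.
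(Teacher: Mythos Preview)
Your proposal is correct, and Step~2 (the Schauder bootstrap for $m_0\in C^{2+\alpha}$ with compact support via the Neumann approximants) matches the paper's argument essentially verbatim.

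For Step~1, the paper takes a more direct route than your equicontinuity-plus-compactness argument. Rather than combining the $L^p(Q_T)$ Cauchy property from Proposition~\ref{5.6.[2]} with a uniform $\bm{d_1}(m^n(t),m^n(s))\le C|t-s|^{1/2}$ bound and Arzel\`a--Ascoli, the paper proves directly that $(m_n)$ is Cauchy in $C([0,T];\mathcal{P}(\Omega))$: for each $n$ and each terminal time $s$, it solves the backward equation
\[
-\partial_t\phi_n^s-\text{tr}(a(x)D^2\phi_n^s)+H_p(x,Du_n)\cdot D\phi_n^s=0,\qquad \phi_n^s(s)=\psi,
\]
with $\psi$ a fixed $1$-Lipschitz function, and estimates $\|\phi_n^s-\phi_k^s\|_{1+\alpha}$ via Lemma~\ref{3.1} in terms of $\|Du_n-Du_k\|_\infty$. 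Using the weak formulation, this yields the quantitative bound
\[
\sup_{s\in[0,T]}\bm{d_1}(m_n(s),m_k(s))\le C\bigl(\bm{d_1}(m_{0,n},m_{0,k})+\|Du_n-Du_k\|_\infty\bigr),
\]
from which Cauchy follows immediately. Your compactness route is valid and perhaps conceptually cleaner, but the paper's explicit Cauchy estimate has the advantage of being reusable: essentially the same backward-equation comparison is invoked again in the fixed-point argument of Lemma~\ref{General FP}, and it foreshadows the stability estimate of Lemma~\ref{stability estimates}.
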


\begin{proof}
\textit{Step 1:} Note that, by density, there exists a sequence $(m_{0,n})_{n\in\mathbb{N}}\in L^1(\Omega)$ such that $m_{0,n}\rightarrow m_0$ in $\mathcal{P}(\Omega)$ and let $(u_n,m_n)$ be the unique solution of 
$$
\begin{cases}
-\partial_tu_n-\text{tr}(a(x)D^2u_n)+H(x,Du_n)=F(x,m_n(t)),\text{ in }(0,T)\times\Omega,\\
\partial_t m_n-\text{div}(a(x)Dm_n)-\text{div}\left(m_n (H_p(x,Du_n)+\tilde{b}(x))\right)=0,\text{ in }(0,T)\times\Omega\\
u_n(T,x)=G(x,m_n(T)),\quad m(t_0)=m_{0,n},\\
\end{cases};$$
given by Theorem 5.1 of [1]. We will show that the pair $(u_n,m_n)$ converges to $(u,m)$, which will be a solution of \textbf{(MFG)}.

\noindent
By Proposition \ref{MFGL1}, we have that there exists a constant $C>0$ such that 
$$||u_n||_{1+\frac{\alpha}{2},2+\alpha}\leq C\text{ for all }n\in\mathbb{N}.$$
Thus, by Arzela-Ascoli, there exists a subsequence (still denoted by $(u_n)_{n\in\mathbb{N}}$) such that $u_n\rightarrow u$ uniformly in $C^{1,2}$ and $u\in C^{1+\frac{\alpha}{2},2+\alpha}(Q_T)$.

\noindent
Now, fix $\psi\in W^{1,\infty}(\Omega)$ with $||\psi||_{W^{1,\infty}}\leq 1$ and $s\in (0,T]$. By Proposition 3.4 of [1], there exists $\phi^s_{\epsilon}$ that solves
$$\begin{cases}
-\partial_t \phi^s_{\epsilon}-\text{tr}(a(x)D^2\phi^s_{\epsilon})+H_p(x,Du_n)\cdot D\phi^s_{\epsilon}=0,\quad\text{ in } (0,s)\times \Omega_{\epsilon},\\
\phi^s_{\epsilon}(s)=\psi,\text{ in }\Omega,\\
a(x)D\phi^s_{\epsilon}\cdot\nu(x)|_{\partial\Omega_{\epsilon}}=0,
\end{cases}$$
such that $\phi^s_{\epsilon}\rightarrow \phi^s_n$ in $C([0,s]; L^p(\Omega))$ as $\epsilon\rightarrow 0$, where $\phi^s_n$ is a weak solution to the equation
 $$\begin{cases}
    -\partial_t\phi_n^s-\text{tr}(a(x)D^2\phi_n^s )+H_p(x,Du_n)\cdot D\phi_n^s=0,\text{ in }(0,s)\times\Omega,\\
    \phi_n (s)=\psi,\text{ in }\Omega.\\
    \end{cases}$$
However, by Lemma \ref{3.2}, we have
$$|\phi^s_{\epsilon}(t,x)-\phi^s_{\epsilon}(w,y)|\leq C||\psi||_{W^{1,\infty}}(|t-w|^{1/2}+|x-y|),$$
where $C$ is independent of $\epsilon$ due to Proposition \ref{domains}. Thus, by Arzela-Ascoli,  $\phi^s_{\epsilon}\rightarrow \phi^s_n$ uniformly as $\epsilon\rightarrow 0$ and
\begin{equation}
    \sup_{t\in[0,s]}||\phi^s_n(t,\cdot)||_{W^{1,\infty}}\leq C||\psi||_{W^{1,\infty}}\leq C.
\end{equation}
Set $\phi^s_{n,k}:=\phi^s_n-\phi^s_k$. This function satisfies
$$\begin{cases}
-\partial_t \phi^s_{n,k}-\text{tr}(a(x)D^2\phi^s_{n,k})+H_p(x,Du_n)\cdot \phi^s_{n,k}=(H_p(x,Du_k)-H_p(x,Du_n))\cdot D\phi_k,\\
\hspace{12cm}\text{in }(0,s)\times\Omega,\\
\phi^s_{n,k}(s)=0,\text{ in }\Omega.
\end{cases}$$
By Proposition 3.4 of [1], once again, we know that $\phi_{n,k}^s$ can be obtained as a limit of $\phi_{n,k}^{s,\epsilon}$ solving the same equation in $\Omega_{\epsilon}$ with Neumann conditions. Therefore, by Lemma \ref{3.1}, we get that there exists a constant $C>0$, such that
\begin{equation}\sup_{t\in [0,s]}||\phi^s_{n,k}(t,\cdot)||_{1+\alpha}\leq C||(H_p(x,Du_k)-H_p(x,Du_n))\cdot D\phi_k||_{\infty}\leq C||Du_k-Du_n||_{\infty},\end{equation}
where in the last step we used (4.5) and the Lipschitz continuity of $H_p$. 

\noindent
Now, by Definition \ref{FP}, we have
$$\int_{\Omega}\psi (x)m_n(s)dx=\int_{\Omega}\phi^s_n (s,x)m_n(s)dx=\int_{\Omega}\phi^s_n (0,x)m_{0,n}dx.$$
Thus,
\begin{align*}
\int_{\Omega}\psi (x)(m_n(s)-m_k(s)dx & =\int_{\Omega}\phi^s_n (0,x)m_{0,n}dx-\int_{\Omega}\phi^s_k(0,x)m_{0,k}dx\\
&= \int_{\Omega}\phi_n^s(0,x)(m_{0,n}-m_{0,k})dx+\int_{\Omega}(\phi_n^s(0,x)-\phi^s_k(0,x))m_{0,k}dx\\
&\leq C\bm{d_1}(m_{0,n},m_{0,k})+||\phi^s_{n,k}||_{\infty}\\
&\leq  C\left( \bm{d_1}(m_{0,n},m_{0,k})+||Du_k-Du_n||_{\infty}\right).
\end{align*}
In consequence, taking supremum over $\psi\in W^{1,\infty}$ and $s$, we deduce
$$\sup_{s\in[0,T]}\bm{d_1}(m_n(s),m_k(s))\leq C\left( \bm{d_1}(m_{0,n},m_{0,k})+||Du_k-Du_n||_{\infty}\right).$$
Since $(Du_n)_{n\in\mathbb{N}}$ and $(m_{0,n})_{n\in\mathbb{N}}$ are Cauchy, we deduce that $(m_n)_{n\in\mathbb{N}}$ is Cauchy in $C([0,T];\mathcal{P}(\Omega))$. In addition, it follows from Proposition \ref{5.6.[2]} that 
$$||m_n-m_k||_{L^p(Q_T)}\leq C\bm{d_1}(m_{0,n},m_{0,k}).$$
Therefore, $(m_n)_{n\in\mathbb{N}}$ is also Cauchy in $L^p(Q_T)$. Thus, there exists an $m\in C([0,T]; \mathcal{P}(\Omega))\cap L^p(Q_T)$ such that $m_n\rightarrow m$ in $C([0,T]; \mathcal{P}(\Omega))\cap L^p(Q_T)$.
\vspace{1mm}

\noindent
Finally, since
$$(u_n,m_n)\rightarrow (u,m)\text{ in }C^{1,2}\times \left(C([0,T]; \mathcal{P}(\Omega))\cap L^p(Q_T)\right),$$
we deduce that $(u,m)$ solves the \textbf{(MFG)} system in the sense described.
\vspace{2mm}

\noindent
\textit{Step 2.} Finally, we show the last claim. We assume $m_0\in C^{2+\alpha}(\Omega)$ with compact support and $\epsilon$ is such that $\text{spt}(m_0)\subset \Omega_{\epsilon}$. We know from Proposition 3.3 of [2] that $m^{\epsilon}$ satisfies
$$\begin{cases}
\partial_t m^{\epsilon}-\text{tr}(a(x)D^2m^{\epsilon})-m^{\epsilon}\text{div}\left(H_p(x,Du^{\epsilon})+\tilde{b}(x)\right)-Dm^{\epsilon}\cdot \left(2\tilde{b}(x)+H_p(x,Du^{\epsilon})\right)=0,\\
\quad\quad\quad\quad\quad\quad\quad\quad\quad\quad\quad\quad\quad\quad\quad\quad\quad\quad\quad\quad\quad \quad \quad \quad \quad \quad \quad \quad \quad \quad \quad \;\;   \text{ in } (0,T)\times\Omega_{\epsilon},\\
m^{\epsilon}(0)=m_0,\\
\left(a(x)Dm^{\epsilon}+m^{\epsilon}(\tilde{b}(x)+H_p(x,Du^{\epsilon}))\right)\cdot \nu(x)|_{\partial \Omega_{\epsilon}}=0.
\end{cases}$$
Note that $u^{\epsilon}\in C^{1+\frac{\alpha}{2},2+\alpha}$ and $m_0$ satisfies the Neumann compatibility conditions. Therefore, by Schauder theory (Theorem IV. 5.3 of [4]) and the uniform bounds established for $u^{\epsilon}$ in Proposition \ref{MFGL1}, we get that there exists a constant $C$, independent of $\epsilon$, such that 
$$||m^{\epsilon}||_{1+\frac{\alpha}{2},2+\alpha}\leq C\quad ,\text{ in }\Omega_{\epsilon}\;.$$
By Arzela-Ascoli, $m^{\epsilon}\rightarrow m$ locally uniformly in $C^{1,2}$ as $\epsilon\rightarrow 0$ and 
$$||m||_{1+\frac{\alpha}{2},2+\alpha}\leq C.$$
The proof is complete.
\end{proof}

\vspace{7mm}

\noindent
Moreover, \textbf{(MFG)} also satisfies some stability properties. However, before we state and prove the result we are going to need two lemmata. 
\vspace{2mm}

\begin{lem} \label{Lasry-Lions}
Suppose that $m_{01},m_{02}\in\mathcal{P}(\Omega)$ and let $(u_1,m_1)$ and $(u_2,m_2)$ be two solutions of the \textbf{(MFG)} system with $m_1(t_0)=m_{01}$ and $m_2(t_0)=m_{02}$. Then,
$$\int_{t_0}^T\int_{\Omega}(H(x,Du_2)-H(x,Du_1)-H_p(x,Du_1)\cdot(Du_2-Du_1))m_1(t)dxdt$$
$$+\int_{t_0}^T\int_{\Omega}(H(x,Du_1)-H(x,Du_2)-H_p(x,Du_2)\cdot (Du_1-Du_2))m_2(t)dxdt$$
$$\leq \int_{\Omega}(u_1(t_0,x)-u_2(t_0,x))(m_{01}-m_{02})dx\;.$$
\end{lem}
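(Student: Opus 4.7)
The plan is to prove the classical Lasry--Lions duality identity for the pair $(u_1-u_2, m_1-m_2)$ and then extract the claimed inequality from the monotonicity of $F$ and $G$. Because $(u_i,m_i)$ satisfies the Fokker--Planck equation only in the weak sense of Definition \ref{FP}, integration by parts cannot be carried out directly on $\Omega$. I would therefore perform the computation on the Neumann approximations $(u_i^\epsilon, m_i^\epsilon)$ of problem (\ref{MFGep}) on $\Omega_\epsilon$, constructed in Proposition \ref{MFGL1} and in Theorem \ref{MFG sol}, and pass to the limit $\epsilon\to 0$. By the density step of Theorem \ref{MFG sol} I may first assume $m_{0i}\in L^1(\Omega)\cap\mathcal{P}(\Omega)$ and recover the general case at the end.

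On $\Omega_\epsilon$, write the HJB in divergence form, $-\partial_t u_i^\epsilon - \operatorname{div}(aDu_i^\epsilon) + \tilde{b}\cdot Du_i^\epsilon + H(x,Du_i^\epsilon) = F(x,m_i^\epsilon)$, pair it with $m_1^\epsilon - m_2^\epsilon$, and symmetrically pair the Fokker--Planck equation with $u_1^\epsilon - u_2^\epsilon$. The Neumann conditions $aDu_i^\epsilon\cdot\nu|_{\partial\Omega_\epsilon}=0$ and $(aDm_i^\epsilon + m_i^\epsilon(H_p + \tilde b))\cdot\nu|_{\partial\Omega_\epsilon}=0$ annihilate every boundary term arising from the integration by parts. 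Summing the two pairings, the $\int aD(u_1^\epsilon - u_2^\epsilon)\cdot D(m_1^\epsilon - m_2^\epsilon)$ contributions cancel, as do the $\tilde{b}$-contributions, and one obtains
\begin{align*}
\frac{d}{dt}\int_{\Omega_\epsilon}(u_1^\epsilon - u_2^\epsilon)(m_1^\epsilon - m_2^\epsilon)\,dx &= -\int_{\Omega_\epsilon}\bigl(F(x,m_1^\epsilon) - F(x,m_2^\epsilon)\bigr)(m_1^\epsilon - m_2^\epsilon)\,dx \\
&\quad - \int_{\Omega_\epsilon}\bigl[H(x,Du_2^\epsilon) - H(x,Du_1^\epsilon) - H_p(x,Du_1^\epsilon)\cdot(Du_2^\epsilon - Du_1^\epsilon)\bigr] m_1^\epsilon\,dx \\
&\quad - \int_{\Omega_\epsilon}\bigl[H(x,Du_1^\epsilon) - H(x,Du_2^\epsilon) - H_p(x,Du_2^\epsilon)\cdot(Du_1^\epsilon - Du_2^\epsilon)\bigr] m_2^\epsilon\,dx.
\end{align*}

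Integrating over $[t_0,T]$ and rearranging, the terminal pairing $\int(u_1^\epsilon(T) - u_2^\epsilon(T))(m_1^\epsilon(T) - m_2^\epsilon(T))$ splits, by the terminal condition in (\ref{MFGep}), into the $G$-pairing $\int(G(\cdot,m_1^\epsilon(T)) - G(\cdot,m_2^\epsilon(T)))(m_1^\epsilon(T) - m_2^\epsilon(T))$---which is nonnegative by monotonicity of $G$---and a correction involving $\mathcal{N}_\epsilon(aD_xG(\cdot,m_i^\epsilon(T)))$ whose $C^{2+\alpha}$-norm is controlled by $\|aD_xG\cdot\nu\|_{C^{1+\alpha}(\partial\Omega_\epsilon)}$ via Proposition \ref{extension} and tends to $0$ as $\epsilon\to 0$ by Hypothesis (5). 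Monotonicity of $F$ forces the $F$-integral to be nonnegative, contributing a nonpositive quantity to the right-hand side. The remaining terms are precisely the two Legendre-type integrals in the statement. Passing to the limit, the convergences of Theorem \ref{MFG sol} ($u_i^\epsilon\to u_i$ locally in $C^{1,2}$ and $m_i^\epsilon\to m_i$ in $C([0,T];L^p(\Omega))$, extending $m_i^\epsilon$ by zero), together with the uniform bounds on $u_i^\epsilon$ and the continuity of $H$ and $H_p$, allow dominated convergence in each integral; the left-hand side passes because $u_i^\epsilon(t_0,\cdot)\to u_i(t_0,\cdot)$ uniformly. Undoing the initial-data regularization by density, using the stability of $u_i$ in $C^{1+\alpha/2,2+\alpha}$ established in Theorem \ref{MFG sol}, yields the stated inequality for all $m_{0i}\in\mathcal{P}(\Omega)$.

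The principal obstacle is the boundary behaviour: the invariance condition (\ref{PR}) guarantees only that trajectories stay inside $\Omega$, and does not literally provide no-flux boundary data that would justify a direct integration by parts on $\Omega$. Routing the identity through the Neumann subproblems on $\Omega_\epsilon$ is what makes the integration by parts clean, and verifying that the terminal Neumann correction $\mathcal{N}_\epsilon(aD_xG\cdot\nu)$ becomes negligible as $\epsilon\to 0$ is where Hypothesis (5) enters essentially.
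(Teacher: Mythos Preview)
Your argument is correct, but it takes a longer route than the paper's. The paper avoids the $\epsilon$-approximation entirely by exploiting the structure of Definition 4.1: setting $w := u_1 - u_2$, one checks (using the two HJB equations) that $-\partial_t w - \mathrm{tr}(aD^2 w) + H_p(x,Du_i)\cdot Dw \in L^\infty(Q_T)$ for each $i\in\{1,2\}$, so $w$ itself qualifies as a test function for the weak Fokker--Planck equation satisfied by $m_i$. Writing out that weak formulation for $i=1$ and $i=2$, substituting the HJB identities on the left, and subtracting gives the desired inequality directly on $\Omega$ for arbitrary $m_{0i}\in\mathcal{P}(\Omega)$, after invoking monotonicity of $F$ and $G$. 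No boundary terms ever arise because Definition 4.1 pairs $m$ against solutions of backward equations rather than compactly supported test functions, and this is precisely where the invariance condition has already been absorbed. Your detour through the Neumann problems on $\Omega_\epsilon$, followed by a second density step from $L^1$ to $\mathcal{P}(\Omega)$, works and is conceptually elementary (just classical integration by parts), but here it buys nothing beyond what the weak formulation already delivers in one line.
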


\begin{proof}
The proof is standard ([12]), so we provide a sketch. Let $w:=u_1-u_2$. Note that for $i\in \{1,2\}$ we have
$$\begin{cases}
-\partial_tw -\text{tr}(a(x)D^2w)+Dw\cdot H_p(x,Du_i)\in L^{\infty}([t_0,T]\times\Omega)\; ,\\
w(T,\cdot))=G(\cdot, m_1(T))-G(\cdot, m_2(T))\in L^{\infty}(\Omega)\;,
\end{cases}$$
so by Definition 4.1 we have 
in the sense of distributions, the weak formulation holds:
\begin{multline*}
\int_{t_0}^T\int_{\Omega}m_i(-\partial_t w -\text{tr}(a(x)D^2w)+Dw\cdot H_p(x,Du_i))\;dxds=\\
\int_{\Omega}m_{0i}w(t_0,x)dx-\int_{\Omega}m_i(T)w(T,x)dx.
\end{multline*}
Using the first equation of \textbf{(MFG)} this rewrites as
\begin{multline*}
\int_{t_0}^T\int_{\Omega}m_i(H(x,Du_2)-H(x,Du_1)+F(x,m_1)-F(x,m_2)+Dw\cdot H_p(x,Du_i))\;dxds=\\
\int_{\Omega}m_{0i}w(t_0,x)dx-\int_{\Omega}m_i(T)w(T,x)dx.
\end{multline*}
The result follows by subtracting the two equations and using the monotonicity of $F$ and $G$.
\end{proof}


\vspace{5mm}

\begin{lem} \label{stability estimates}
Suppose Hypotheses 2.3 are satisfied and that $m_{01},m_{02}\in\mathcal{P}(\Omega)$ and let $(u_1,m_1)$ and $(u_2,m_2)$ be two solutions of the \textbf{(MFG)} system with $m_1(t_0)=m_{01}$ and $m_2(t_0)=m_{02}$. Then, there exists a constant $\tilde{C}>0$, independent of $m_{01}$, $m_{02}$ and $t_0$, such that 
$$\sup_{t\in [t_0,T]}||(u_1-u_2)(t,\cdot)||_{2+\alpha}\leq \tilde{C}\sup_{t\in [t_0,T]}\bm{d_1}(m_1(t),m_2(t))\text{  and }$$
$$\sup_{t\in [t_0,T]}\bm{d_1}(m_1(t),m_2(t))\leq \tilde{C}\bm{d_1}(m_{01},m_{02}).$$
In particular, 
$$\sup_{t\in [0,T]}\sup_{m_1\neq m_2}\left[\bm{d_1}(m_1,m_2)^{-1}||U(t,\cdot,m_1)-U(t,\cdot,m_2)||_{2+\alpha}\right]\leq C$$
and the solution to \textbf{(MFG)} is unique.
\end{lem}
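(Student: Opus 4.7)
The plan is to establish the two displayed inequalities separately, deduce the Lipschitz bound on $U$ as an immediate consequence, and obtain uniqueness by applying the second inequality with $m_{01}=m_{02}$.

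For the first inequality, I would set $w:=u_1-u_2$ and write $H(x,Du_1)-H(x,Du_2)=V(t,x)\cdot Dw$ where $V(t,x):=\int_0^1 H_p(x,sDu_1+(1-s)Du_2)\,ds$ is bounded since $H$ is Lipschitz. Then $w$ solves a linear backward equation on $(t_0,T)\times\Omega$ with right-hand side $F(\cdot,m_1(t))-F(\cdot,m_2(t))$ and terminal datum $G(\cdot,m_1(T))-G(\cdot,m_2(T))$. The Lipschitz assumptions on $\delta F/\delta m$ and $\delta G/\delta m$ from Hypotheses (3)--(4), combined with $F(x,m_1)-F(x,m_2)=\int_0^1\int_\Omega\frac{\delta F}{\delta m}(x,sm_1+(1-s)m_2,y)\,d(m_1-m_2)(y)\,ds$ and its analogue for $G$, yield $\|F(\cdot,m_1(t))-F(\cdot,m_2(t))\|_{\alpha}\le C\bm{d_1}(m_1(t),m_2(t))$ and $\|G(\cdot,m_1(T))-G(\cdot,m_2(T))\|_{2+\alpha}\le C\bm{d_1}(m_1(T),m_2(T))$. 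I would then approximate $w$ by $w^\epsilon:=u_1^\epsilon-u_2^\epsilon$ on $\Omega_\epsilon$ with homogeneous Neumann conditions (as in the proof of Proposition~\ref{MFGL1}), apply the Schauder-type parabolic estimates from [2, 4] to obtain a $C^{1+\alpha/2,\,2+\alpha}$-bound on $w^\epsilon$ uniform in $\epsilon$, and pass to the limit $\epsilon\to 0^+$ to conclude $\sup_{t\in[t_0,T]}\|(u_1-u_2)(t,\cdot)\|_{2+\alpha}\le\tilde C\sup_{t\in[t_0,T]}\bm{d_1}(m_1(t),m_2(t))$.

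For the second inequality, I would use a duality argument. Fix $\psi\in W^{1,\infty}(\Omega)$ with $\|\psi\|_{W^{1,\infty}}\le 1$ and $s\in(t_0,T]$, and let $\phi^s$ solve the dual equation $-\partial_t\phi^s-\text{tr}(a(x)D^2\phi^s)+(H_p(x,Du_1)+\tilde b(x))\cdot D\phi^s=0$ on $(t_0,s)\times\Omega$ with $\phi^s(s,\cdot)=\psi$, constructed as the limit of its Neumann counterparts on $\Omega_\epsilon$ via Proposition~3.4 of [1] and satisfying $\sup_{[t_0,s]}\|\phi^s(t,\cdot)\|_{W^{1,\infty}}\le C$ thanks to Lemma~\ref{3.2}. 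Plugging $\phi^s$ into the weak formulation (Definition~4.1) of the Fokker--Planck equations for $m_1$ and for $m_2$ (whose drifts $H_p(x,Du_i)+\tilde b$ differ) and subtracting produces the identity $\int_\Omega\psi\,d(m_1-m_2)(s)=\int_\Omega\phi^s(t_0,\cdot)\,d(m_{01}-m_{02})+\int_{t_0}^s\!\int_\Omega m_2\bigl(H_p(x,Du_2)-H_p(x,Du_1)\bigr)\cdot D\phi^s\,dx\,dt$. The first term is at most $C\bm{d_1}(m_{01},m_{02})$, and the second, using $\|m_2(t,\cdot)\|_{L^1(\Omega)}=1$, the Lipschitz character of $H_p$, and the first inequality, is at most $C(s-t_0)\sup_{r\in[t_0,s]}\bm{d_1}(m_1(r),m_2(r))$. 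Taking $\sup_\psi$ and then $\sup_{s\in[t_0,t_0+h]}$ for $h$ so small that $Ch<1/2$ lets me absorb the second term and obtain $\sup_{[t_0,t_0+h]}\bm{d_1}(m_1,m_2)\le 2C\bm{d_1}(m_{01},m_{02})$; iterating across consecutive subintervals of length $h$ then covers the full range $[t_0,T]$.

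The third inequality of the lemma follows by applying the first at $t_0=t$ with initial data $m_1,m_2\in\mathcal{P}(\Omega)$ and chaining with the second, while uniqueness is the case $m_{01}=m_{02}$ of the second inequality combined with uniqueness for the backward HJB equation. The step I expect to be hardest is the duality argument: the dual function $\phi^s$ must be constructed via the $\Omega_\epsilon$-approximation in a way compatible with the invariance condition~(\ref{PR}) rather than with Neumann data, and the Schauder bound of the first step needs the terminal datum of the approximating problem to be modified by the $\mathcal{N}_\epsilon$-correction of Proposition~\ref{MFGL1} so that the Neumann compatibility condition on $\partial\Omega_\epsilon$ is met uniformly in $\epsilon$; putting these two approximations together carefully, while verifying that all intermediate constants remain $\epsilon$-independent in the spirit of Remark~3.6, is the real technical heart of the argument.
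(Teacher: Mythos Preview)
Your argument for the first estimate is essentially the paper's: linearise via $V=\int_0^1 H_p(x,sDu_1+(1-s)Du_2)\,ds$, bound the forcing and terminal data by $\sup_t\bm{d_1}(m_1(t),m_2(t))$, solve the approximating linear problem on $\Omega_\epsilon$ with the $\mathcal{N}_\epsilon$-corrected terminal datum, and pass to the limit. (The paper approximates $\bar u$ by the solution $\bar u^\epsilon$ of the linear equation on $\Omega_\epsilon$ rather than by $u_1^\epsilon-u_2^\epsilon$, but this is cosmetic.)

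The second estimate, however, has a real gap. When you invoke the first inequality to control $|H_p(x,Du_1)-H_p(x,Du_2)|$ inside the duality integral on $[t_0,s]$, what you actually get is $\|Du_1-Du_2\|_{L^\infty([t_0,T]\times\Omega)}\le\tilde C\sup_{r\in[t_0,T]}\bm{d_1}(m_1(r),m_2(r))$: the supremum on the right is over the \emph{full} interval $[t_0,T]$, not over $[t_0,s]$, because the backward equation for $\bar u$ has its terminal datum at $T$ and the Schauder bound is global. Hence your second term is only $\le C(s-t_0)\sup_{[t_0,T]}\bm{d_1}(m_1,m_2)$, and after taking $\sup_{s\in[t_0,t_0+h]}$ you obtain
\[
\sup_{[t_0,t_0+h]}\bm{d_1}(m_1,m_2)\le C\bm{d_1}(m_{01},m_{02})+Ch\sup_{[t_0,T]}\bm{d_1}(m_1,m_2),
\]
which cannot be absorbed. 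The iteration across subintervals fails for the same reason: on every subinterval the error term still carries $\sup_{[t_0,T]}\bm{d_1}$, a manifestation of the forward--backward coupling of \textbf{(MFG)}. Your scheme therefore only closes when $C(T-t_0)<1$.

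The paper avoids this by bringing in the Lasry--Lions monotonicity identity (Lemma~\ref{Lasry-Lions}), which uses the monotonicity of $F$ and $G$ to produce
\[
\int_{t_0}^{T}\!\!\int_{\Omega}|Du_1-Du_2|^{2}(m_1+m_2)\,dx\,dt\le C\,\|u_1-u_2\|_{2+\alpha}\,\bm{d_1}(m_{01},m_{02}),
\]
and then applies Cauchy--Schwarz in the duality step rather than the $L^\infty$ bound. This yields $\sup_{[t_0,T]}\bm{d_1}(m_1,m_2)\le C\bigl((\sup_{[t_0,T]}\bm{d_1}(m_1,m_2))^{1/2}\bm{d_1}(m_{01},m_{02})^{1/2}+\bm{d_1}(m_{01},m_{02})\bigr)$, which closes for arbitrary $T$ via the AM--GM inequality. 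The monotonicity hypotheses in (3)--(4) are used precisely here and are what replaces your small-time/iteration step.
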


\begin{proof}
We set
\begin{align*}
    & \bar{u}  :=u_1-u_2\;, \\
   &  V(t,x)  =\int_0^1H_p(x,sDu_1+(1-s)Du_2)ds\;,\\
    & f(t,x) =\int_0^1\int_{\Omega}\frac{\delta F}{\delta m}(x,sm_1(t)+(1-s)m_2(t),y)(m_1(t)-m_2(t))dyds\quad\text{   and }\\
    & g(x) =\int_0^1\int_{\Omega}\frac{\delta G}{\delta m}(x,sm_1(T)+(1-s)m_2(T),y)(m_1(T)-m_2(T))dyds.
\end{align*}
Then, by Theorem 3.8 of [1], $\bar{u}$ is the unique solution of the equation

\begin{equation}\begin{cases}
-\partial_t \bar{u}-\text{tr}( a(x)D^2\bar{u})+V(t,x)\cdot D\bar{u}=f(t,x),\text{ in }(t_0,T)\times \Omega,\\
\bar{u}(T,x)=g(x),\text{ in }\Omega,
\end{cases}\end{equation}
where near the boundary we assume $(2.1)$. Also, let $\bar{u}^{\epsilon}$ be the solution of the equation 
$$\begin{cases}
-\partial_t \bar{u}^{\epsilon}-\text{tr}( a(x)D^2\bar{u}^{\epsilon})+V(t,x)\cdot D\bar{u}^{\epsilon}=f(t,x)\; ,\text{ in }(t_0,T)\times \Omega_{\epsilon},\\
\bar{u}^{\epsilon}(T,x)=g(x)-\mathcal{N}_{\epsilon}(a(\cdot))D_xg(\cdot))\;, \text{ in }\Omega_{\epsilon},\\
a(x)D\bar{u}^{\epsilon}\cdot \nu(x)|_{\partial\Omega_{\epsilon}}=0.
\end{cases}$$

\noindent
Using the hypotheses on $H_p$, $F$, $G$ and the regularity of $\bar{u}=u_1-u_2$, we can observe that $V$ is bounded in $C^{\frac{\alpha}{2},\alpha}$,  $||f||_{0,\alpha}$ is bounded by  $C\sup_{t\in [t_0,T]}\bm{d_1}(m_1(t),m_2(t))$ and that $||g||_{2+\alpha}$ and $||\mathcal{N}_{\epsilon}(a(\cdot)D_xg(\cdot))||_{2+\alpha}$ are bounded by $C\sup_{t\in [t_0,T]}\bm{d_1}(m_1(t),m_2(t))$. Thus, by Theorem 5.1.21 of [6], we deduce that
\begin{equation}\label{yy}
||\bar{u}^{\epsilon}||_{1,2+\alpha}\leq \tilde{C}\sup_{t\in [t_0,T]}\bm{d_1}(m_1(t),m_2(t)),\text{ in }\Omega_{\epsilon},
\end{equation}
where $\tilde{C}$ does not depend on $\epsilon$ due to Remark 3.4. By Arzela-Ascoli, we have that $\bar{u}^{\epsilon}$ converges, up to subsequence, in $C([t_0,T];C^2(K))$ for every compact subset $K$ of $\Omega$, to a solution of $(4.7)$ as $\epsilon\rightarrow 0$. In consequence, by uniqueness,
$$\bar{u}^{\epsilon}\rightarrow \bar{u}\text{ in }C^2(K)$$
and, passing to the limit in (\ref{yy}),
$$\sup_{t\in[t_0,T]}||\bar{u}(t,\cdot)||_{2+\alpha}\leq \tilde{C}\sup_{t\in [t_0,T]}\bm{d_1}(m_1(t),m_2(t)),\text{ in }\Omega,$$
which is the first estimate.

\vspace{3mm}

\noindent
To prove the second estimate we argue as follows. By Lemma \ref{Lasry-Lions} and the hypotheses on $H$, we have
$$\int_{t_0}^T\int_{\Omega}|Du_1-Du_2|^2(m_1(t)+m_2(t))dxdt\leq C\int_{\Omega}(u_1(t_0,x)-u_2(t_0,x))(m_{01}-m_{02})dx.$$
However, by the uniform gradient bound of $u_1,u_2$ proved in Proposition \ref{MFGL1} and Theorem \ref{MFG sol}, the right-hand side is bounded by $C||u_1-u_2||_{2+\alpha}\bm{d_1}(m_{01},m_{02})$. Hence

\begin{equation}\int_{t_0}^T\int_{\Omega}|Du_1-Du_2|^2(m_1(t)+m_2(t))dxdt\leq C||u_1-u_2||_{2+\alpha}\bm{d_1}(m_{01},m_{02})\;.\end{equation}
\vspace{1mm}

\noindent
Now, pick $s\in (t_0,T]$ and let $\phi$ be a function satisfying
$$\begin{cases}
    -\partial_t\phi-\text{tr}(a(x)D^2\phi )+H_p(x,Du_1)\cdot D\phi=0,\text{ in }(t_0,s)\times\Omega,\\
    \phi (s)=\psi,\text{ in }\Omega,\\
    \end{cases}$$
where near the boundary we assume $(2.1)$ and where $\psi\in W^{1,\infty}(\Omega)$. Note that, due to Proposition 3.4 of [1] and Lemma \ref{3.2}, $\phi$ is Lipschitz, because $\phi$ can be obtained as the limit of a uniformly Lipschitz family of functions. Thus, $\phi$ can be used as a test function for both $m_1$ and $m_2$.\\
By subtracting the weak formulation of $m_1,m_2$ we discover
\begin{multline*}
\int_{\Omega}\phi(s,x)(m_1(s)-m_2(s))dx+\int_{t_0}^s\int_{\Omega}(H_p(x,Du_1)-H_p(x,Du_2))\cdot D\phi\text{ } m_2(t)dxdt\\
+\int_{t_0}^s\int_{\Omega}\left(-\partial_t\phi-\text{tr}(a(x)D^2\phi )+H_p(x,Du_1)\cdot D\phi\right)(m_1(t)-m_2(t))dxdt=\\
\int_{\Omega}\phi(t_0,x)(m_{01}-m_{02})dx\;.
\end{multline*}
Using the Lipschitz continuity of $H_p$ with respect to $p$, we get
$$\int_{\Omega}\psi(x)(m_1(s)-m_2(s))dx\leq C\int_{t_0}^s\int_{\Omega}|Du_1-Du_2|m_2(t)dxdt+C\bm{d_1}(m_{01},m_{02}).$$
Now, by Cauchy-Schwarz, we get
$$\int_{\Omega}\psi(x)(m_1(s)-m_2(s))dx\leq C\left(\int_{t_0}^s\int_{\Omega}|Du_1-Du_2|^2m_2(t)dxdt\right)^{1/2}+C\bm{d_1}(m_{01},m_{02})$$
$$\stackrel{(4.9)}{\leq} C(\sup_{t\in[t_0,T]}||u_1(t,\cdot)-u_2(t,\cdot)||_{2+\alpha})^{1/2}\bm{d_1}(m_{01},m_{02})^{1/2}+\bm{d_1}(m_{01},m_{02}))\;.$$
Considering the supremum over all $\psi\in W^{1,\infty}$, we derive
$$\sup_{t\in [t_0,T]}\bm{d_1}(m_1(t),m_2(t))\leq C\left(\sup_{t\in[t_0,T]}||u_1(t,\cdot)-u_2(t,\cdot)||_{2+\alpha})^{1/2}\bm{d_1}(m_{01},m_{02})^{1/2}+\bm{d_1}(m_{01},m_{02})\right).$$
Applying the first estimate, we deduce that
$$\sup_{t\in [t_0,T]}\bm{d_1}(m_1(t),m_2(t))\leq C\left(\sup_{t\in [t_0,T]}\bm{d_1}(m_1(t),m_2(t))^{1/2}\bm{d_1}(m_{01},m_{02})^{1/2}+\bm{d_1}(m_{01},m_{02})\right).$$
Now, by the Arithmetic-Geometric mean inequality, we can easily get
\vspace{1mm}
$$\sup_{t\in [t_0,T]}\bm{d_1}(m_1(t),m_2(t))\leq C\bm{d_1}(m_{01},m_{02}),$$
which is the second estimate. 
\end{proof}

\vspace{5mm}

\begin{cor} \label{stability}(Stability of Solutions)

\noindent
Fix $t_0\in[0,T]$ and let $(m_{0,n})_{n\in\mathbb{N}}\in\mathcal{P}(\Omega)$ such that $\bm{d}_1(m_{0,n},m_0)\rightarrow 0$. Then, there is a convergence of the corresponding solutions to \textbf{(MFG)}
$$(u_n,m_n)\rightarrow (u,m)\text{ in } C([t_0,T];C^{2+\alpha}(\Omega)\times C([t_0,T]; \mathcal{P}(\Omega)).$$ \end{cor}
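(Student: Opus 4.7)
The plan is to view this corollary as an immediate consequence of the two quantitative estimates packaged in Lemma \ref{stability estimates}, since the existence of the pairs $(u_n,m_n)$ and $(u,m)$ is guaranteed by Theorem \ref{MFG sol}, and their uniqueness follows from Lemma \ref{stability estimates} itself (two solutions with the same initial datum satisfy $\bm{d}_1(m_1(t),m_2(t))=0$, hence $u_1\equiv u_2$ by the first estimate).

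First, I would apply the second estimate of Lemma \ref{stability estimates} to the pair $(u_n,m_n)$ and $(u,m)$, whose initial distributions are $m_{0,n}$ and $m_0$ respectively, obtaining the bound
$$\sup_{t\in[t_0,T]}\bm{d}_1(m_n(t),m(t))\leq \tilde{C}\,\bm{d}_1(m_{0,n},m_0).$$
The hypothesis $\bm{d}_1(m_{0,n},m_0)\to 0$ then yields $m_n\to m$ in $C([t_0,T];\mathcal{P}(\Omega))$. Second, I would plug this into the first estimate of the same lemma to conclude
$$\sup_{t\in[t_0,T]}\|u_n(t,\cdot)-u(t,\cdot)\|_{2+\alpha}\leq \tilde{C}\sup_{t\in[t_0,T]}\bm{d}_1(m_n(t),m(t))\leq \tilde{C}^2\,\bm{d}_1(m_{0,n},m_0)\longrightarrow 0,$$
which gives convergence of $u_n$ to $u$ in $C([t_0,T];C^{2+\alpha}(\Omega))$.

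There is no genuine obstacle here; the only thing to be mindful of is that the constant $\tilde{C}$ in Lemma \ref{stability estimates} is independent of the particular initial measures and of $t_0$, which is exactly why the lemma is formulated the way it is. This uniformity is what allows the single limit $\bm{d}_1(m_{0,n},m_0)\to 0$ to be promoted simultaneously to uniform-in-time convergence of $m_n$ in the Wasserstein sense and of $u_n$ in the strong $C^{2+\alpha}$ topology.
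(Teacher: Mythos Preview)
Your proposal is correct and follows essentially the same approach as the paper: both apply the two estimates of Lemma \ref{stability estimates} directly, chaining them to bound $\sup_{t}\|u_n-u\|_{2+\alpha}$ and $\sup_{t}\bm{d_1}(m_n(t),m(t))$ by $\bm{d_1}(m_{0,n},m_0)$.
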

\vspace{2mm}

\begin{proof}
The result follows from Lemma \ref{stability estimates}. Indeed, we have
$$\sup_{t\in[t_0,T]}||u_n(t,\cdot)-u(t,\cdot)||_{2+\alpha}\leq C\bm{d_1}(m_{0,n},m_{0})\rightarrow 0$$
and, similarly,
$$\sup_{t\in [t_0,T]}\bm{d_1}(m_n(t),m(t))\leq C\bm{d_1}(m_{0,n},m_{0})\rightarrow 0.$$
\end{proof}

\vspace{3mm}

\section{The Linearized System}

\noindent
Before verifying that $U$, defined in $(1.1)$, is a solution to the \textbf{ME}, it is essential to establish some further regularity properties for the solution of \textbf{(MFG)}. In particular, we need to establish the differentiability with respect to the measure variable. This can be done by studying the linearized system. In what follows we use the notation
$$\frac{\delta F}{\delta m}(x,m(t))(\mu (t))=:\left\langle  \frac{\delta F}{\delta m}(x,m(t),\cdot ), \mu (t)\right\rangle_{1+\alpha}\;.$$

\subsection{Solution to the system}

Let $t_0\in [0,T)$, $m_0\in \mathcal{P}(\Omega)$ and $(u,m)$ be the solution of \textbf{(MFG)} associated with $(t_0,m_0)$. We consider the linearized version of \textbf{(MFG)} 

\begin{equation*} \label{L}
\bm{(L)} :\begin{cases}
-\partial_t v-\text{tr}(a(x)D^2v)+H_p(x,Du)\cdot Dv=\frac{\delta F}{\delta m}(x,m(t))(\mu (t)),\text{ in } (t_0,T)\times\Omega,\\
\partial_t\mu-\text{div}(a(x)D\mu)-\text{div}\left(\mu (H_p(x,Du)+\tilde{b}(x))\right)-\text{div}(m H_{pp}(x,Du)Dv)=0,\\
\hspace{11.5cm}\text{ in }(t_0,T)\times\Omega,\\
v(T,x)=\frac{\delta G}{\delta m}(x,m(T))(\mu (T))\text{ in }\Omega,\quad \mu (t_0)=\mu _0\in C^{-(1+\alpha)}(\Omega),
\end{cases}
\end{equation*}

\vspace{3mm} 
\noindent
where near the boundary we assume $(2.1)$. For generality, and for reasons that will be apparent later, we will study the more general linearized system:

\begin{equation} \label{GL}
\begin{cases}
-\partial_t v-\text{tr}(a(x)D^2v)+H_p(x,Du)\cdot Dv=\frac{\delta F}{\delta m}(x,m(t))(\mu (t))+h(t,x),\text{ in }(t_0,T)\times\Omega,\\
\partial_t\mu-\text{div}(a(x)D\mu)-\text{div}\left(\mu (H_p(x,Du)+\tilde{b}(x))\right)-\text{div}(m H_{pp}(x,Du)Dv+c)=0,\\
\hspace{11.5cm}\text{ in }(t_0,T)\times \Omega,\\
v(T,x)=\frac{\delta G}{\delta m}(x,m(T))(\mu (T))+v_T(x)\text{ in }\Omega,\quad \mu (t_0)=\mu _0\in C^{-(1+\alpha)}(\Omega),
\end{cases}
\end{equation}

\vspace{3mm} 
\noindent
where near the boundary we assume $(2.1)$ and where
$$v_T\in C^{2+\alpha}_c,\;\; h\in C^{0,\alpha}([t_0,T]\times\Omega),\;\; c\in L^1([t_0,T]\times \Omega).$$
\vspace{3mm}

\noindent
Before presenting the proof (Theorem \ref{generalGL}), we need to explain what it means for $(v,\mu)$ to be a solution of \textbf{(L)} or $(5.1)$ and to analyze the Fokker-Planck equation. Let $p=\frac{d+2}{d+1+\alpha}$.

\begin{defn}
Let $t_0\in [0,T]$, $f\in L^1(W^{-1,\infty})$, $\mu_0\in C^{-(1+\alpha)}$ and $b\in (C(Q_T))^d$ satisfying
\begin{equation*} 
\text{div}(a(x)Dd(x))-b(t,x)\cdot Dd(x)\geq \frac{a(x)Dd(x)\cdot Dd(x)}{d(x)}-Cd(x),
\end{equation*} 
for some constant $C$, for all $t\in [t_0,T]$, $x\in \Omega\setminus \Omega_{\delta_0}$.

\noindent
We say that a function $\mu\in C\left([t_0,T]; C^{-(1+\alpha)}_c(\Omega)\right)\cap L^p([t_0,T]\times\Omega)$ is a weak solution of the problem
\begin{equation}\label{fFP}
\begin{cases}
\partial_t \mu-\text{div}(a(x)D\mu)-\text{div}(\mu b)=f ,\text{ in }(t_0,T)\times\Omega,\\
\mu(t_0)=\mu_0
\end{cases}
\end{equation}
if for all $\psi\in C^{0,\alpha}(Q_T)$, $\xi\in C^{1+\alpha}_c(\Omega)$ and $\phi$ solution of the problem
$$\begin{cases}
-\partial_t\phi -\text{div}(aD\phi)+b\cdot D\phi=\psi,\text{ in }(t_0,t)\times\Omega,\\
\phi (t)=\xi,\text{ in }\Omega,
\end{cases}$$
the following equality holds
$$\langle \mu(t),\xi\rangle +\int_{t_0}^t\int_{\Omega}\mu(s,x)\psi(s,x)dxds=\langle \mu_0,\phi(0,\cdot)\rangle +\int_{t_0}^t\langle f(s),\phi(s,\cdot)\rangle ds,$$
where $\langle \cdot,\cdot\rangle$ denotes the duality between $C^{-(1+\alpha)}_c$ and $C^{1+\alpha}_c$ in the first case, $C^{-(1+\alpha)}$ and $C^{1+\alpha}$ in the second case and $W^{-1,\infty}$ and $W^{1,\infty}$ in the third case.
\end{defn}
\vspace{4mm}

\begin{defn}
We say that $(v,\mu)\in C([t_0,T]; C^{2+\alpha})\times \left( C([t_0,T];C^{-(1+\alpha)}_c(\Omega))\cap L^1([t_0,T]\times\Omega)\right)$ is a solution to $(5.1)$ if
\begin{itemize}
    \item $v$ satisfies the first equation in the sense of distributions.
    \item $\mu$ satisfies the second equation in the sense of Definition 5.1.
\end{itemize}
\end{defn}
\vspace{5mm}

\noindent
Using the same idea as in [1], we can approximate a solution of 
(\ref{GL}) with solutions of the linearized system 

\begin{equation}\label{epsilonGL}
   \begin{cases}
-\partial_t v^ε-\text{tr}(a(x)D^2v^ε)+H_p(x,Du^ε)\cdot Dv^ε=\frac{\delta F^{\epsilon}}{\delta m}(x,m^ε(t))(\mu^ε (t))+h(t,x),\text{ in }(t_0,T)\times\Omega_{\epsilon},\\
\partial_t \mu^ε-\text{div}(a(x)D\mu^ε)-\text{div}\left(\mu^ε (H_p(x,Du^ε)+\tilde{b}(x))\right)-\text{div}(m^{\epsilon} H_{pp}(x,Du^ε)Dv^ε+c)=0,\\
\hspace{11.5cm}\text{ in }(t_0,T)\times \Omega_{\epsilon},\\
v^ε(T,x)= \frac{\delta G^{\epsilon}}{\delta m}(x,m^ε(T))(\mu^ε (T))+v_T(x)-g_{\epsilon}(x)\text{ in }\Omega_{\epsilon},\quad \mu^ε (t_0)=\mu _0,\\
a(x)Dv^ε\cdot \nu(x)|_{\partial Ω_ε}=0,\; \\ (a(x)Dμ^ε+μ^ε(H_p(x,Du^ε)+\tilde{b}(x))+m^εH_{pp}(x,Du^ε)Dv^ε+c)\cdot \nu(x)|_{\partial Ω _ε}=0,
\end{cases}
\end{equation}
\vspace{2mm}

\noindent
where $\frac{\delta F^{\epsilon}}{\delta m}(x,m^{\epsilon}(t),\cdot)\in C^{1+\alpha}_c(\Omega)$ ,$\frac{\delta G^{\epsilon}}{\delta m}(\cdot,m^{\epsilon}(t),\cdot)\in C^{2+\alpha}_c(\Omega)\times C^{2+\alpha}_c(\Omega) ,\;\epsilon\in[0,\epsilon_0/3)$ are given by Hypothesis (5) and 
$$g_{\epsilon}(x):= \mathcal{N}_{\epsilon}(a(\cdot)D_xv_T(\cdot))(x)\;.$$ 
Note that in (\ref{epsilonGL}) we are viewing $\mu_0$ as the restriction $\mu_0|_{C^{1+\alpha}(\Omega_{\epsilon})}$. It follows from Theorem IV.4.1 of [4] that there exists a constant $C$ independent of $\epsilon$, due to Proposition \ref{domains}, such that  $||\mu_0||_{C^{-(1+\alpha)}(\Omega_{\epsilon})}\leq C||\mu_0||_{C^{-(1+\alpha)}(\Omega)}$, so in the estimates below we will be using $||\mu_0||_{C^{-(1+\alpha)}(\Omega)}$ directly.
\vspace{5mm}

\noindent
In [2], it is proved that, under our assumptions for $a,\;F^{\epsilon},\;G^{\epsilon}$ and $H$ (Hypotheses 1,2,3,4,5), (\ref{epsilonGL})  has a unique solution -in the sense of Definition 5.2, even if in Definition 5.1 we had $\xi\in C^{1+\alpha}_c$- $(v^ε,μ^ε)\in C^{1,2+α}([t_0,T]\times\Omega_{\epsilon})\times (C([t_0,T];C^{-(1+α),N}(Ω_ε))\cap L^1([t_0,T]\times Ω^ε))$, satisfying the estimate
$$ ||v^ε||_{C^{1,2+α}([t_0,T]\times\Omega_{\epsilon})}+\sup_{t\in [t_0,T]}||\mu^ε(t)||_{C^{-(1+α),N}(\Omega_{\epsilon})}+||μ^ε||_{L^p([0,T]\times \Omega_{\epsilon})}\leq CM,$$
for some $p>1$ and some constant $C>0$ which, as we mentioned in Remark 3.4, is independent of $\epsilon$, where
$$M=||v_T||_{2+\alpha}+||\mu_0||_{-(1+\alpha)}+||h||_{0,\alpha}+||c||_{L^1}.$$
Since $C^{1+\alpha}_c(\Omega_{\epsilon})$ is a subspace of $C^{1+\alpha,N}(\Omega_{\epsilon})$, the estimate implies that
$$ ||v^ε||_{1,2+α}+\sup_{t\in [t_0,T]}||\mu^ε(t)||_{C^{-(1+α)}_c(\Omega_{\epsilon})}+||μ^ε||_{L^p([t_0,T]\times \Omega_{\epsilon})}\leq CM\text{  in }\Omega_{\epsilon}.$$

\noindent
Now, we extend $\mu^{\epsilon}$ to be $0$ in $\Omega\setminus \Omega_{\epsilon}$ and we denote by $\bar{\mu}_{\epsilon}$ the extension. Then, the previous estimate gives
\begin{equation} \label{5.1}
||v^{\epsilon}||_{1,2+\alpha}+||\bar{μ}^ε||_{L^p([t_0,T]\times \Omega)}+\sup_{t\in [t_0,T]}||\mu^ε(t)||_{C^{-(1+α)}_c(\Omega_{\epsilon})}\leq CM\text{ in }\Omega_{\epsilon}.
\end{equation}

\noindent
Note that, $C^{-(1+α)}_c(\Omega_{\epsilon})$ is a subspace of $C^{-(1+α)}_c(\Omega)$, therefore Hahn-Banach implies that there is an extension $\mu^ε(t)\in C^{-(1+α)}_c(\Omega_{\epsilon})$ of $\tilde{\mu}^{\epsilon}(t)\in C^{-(1+α)}_c(\Omega)$ such that we still have
$$||\tilde{\mu}^{\epsilon}(t)||_{C^{-(1+α)}_c(\Omega)}\leq CM\text{ for all } t\in [t_0,T].$$

\vspace{3mm}

\begin{prop} \label{mu}
Suppose that Hypotheses 2.3 are satisfied and $m_0\in L^1(\Omega)\cap\mathcal{P}(\Omega)$. Τhere exists a $\mu\in C([t_0,T]; C^{-(1+\alpha)}_c(\Omega))\cap L^p([t_0,T]\times\Omega)$ such that 
\begin{align*}
&\tilde{\mu^{\epsilon}}(t)\stackrel{\star}{\rightarrow} \mu(t)\text{ weak-}\star\text{ in }C^{-(1+\alpha)}_c(\Omega)\text{ for all }t\in[t_0,T]\text{ and }\\
&\bar{\mu^{\epsilon}}\rightharpoonup \mu\text{ weakly in }L^p([t_0,T]\times\Omega).
\end{align*}
\end{prop}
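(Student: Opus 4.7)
The strategy is two-pronged: extract $L^p$-weak compactness from reflexivity, extract pointwise weak-$\star$ compactness by a weak-$\star$ Arzelà–Ascoli argument, and then verify that the two limits coincide. The uniform bound (5.4) is the workhorse throughout.

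For the $L^p$ statement, reflexivity of $L^p([t_0,T]\times\Omega)$ (since $p>1$) combined with the bound $\|\bar\mu^\epsilon\|_{L^p}\le CM$ yields, via Banach–Alaoglu, a subsequence with $\bar\mu^\epsilon\rightharpoonup\mu$ weakly in $L^p$.

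For the pointwise weak-$\star$ statement, I would view $\{t\mapsto\tilde\mu^\epsilon(t)\}_\epsilon$ as curves into the closed ball of radius $CM$ in $C^{-(1+\alpha)}_c(\Omega)$, which is weak-$\star$ compact and metrizable because $C^{1+\alpha}_c(\Omega)$ is separable; pointwise precompactness is then immediate from (5.4). The delicate point is equicontinuity in $t$ uniform in $\epsilon$: for each fixed $\phi\in C^{1+\alpha}_c(\Omega)$ I would need $\sup_\epsilon|\langle\tilde\mu^\epsilon(t)-\tilde\mu^\epsilon(s),\phi\rangle_{1+\alpha}|\to 0$ as $|t-s|\to 0$. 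Testing the Fokker–Planck equation in (5.3) against a smoother $\tilde\phi\in C^{3+\alpha}_c(\Omega)$ (smooth enough that $\mathrm{tr}(aD^2\tilde\phi)$ and $\tilde b\cdot D\tilde\phi$ lie in $C^{1+\alpha}$, so that every duality pairing makes sense), and using the uniform bounds on $\|v^\epsilon\|_{1,2+\alpha}$ and $\|\tilde\mu^\epsilon\|_{L^\infty_t C^{-(1+\alpha)}_c}$ together with absolute continuity of $\int|c|$, produces an estimate of the form
\[
\bigl|\langle\tilde\mu^\epsilon(t)-\tilde\mu^\epsilon(s),\tilde\phi\rangle\bigr| \le C\,|t-s|\,\|\tilde\phi\|_{3+\alpha}+\omega_c(|t-s|)\,\|D\tilde\phi\|_\infty,
\]
with $\omega_c$ a modulus depending only on $c$. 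Approximating a general $\phi\in C^{1+\alpha}_c$ by $\tilde\phi_n\in C^{3+\alpha}_c$ in the $C^{1+\alpha}$ norm and absorbing the tail via the uniform dual bound $|\langle\tilde\mu^\epsilon(t)-\tilde\mu^\epsilon(s),\phi-\tilde\phi_n\rangle|\le 2CM\,\|\phi-\tilde\phi_n\|_{1+\alpha}$ yields the desired equicontinuity for each fixed $\phi$. Arzelà–Ascoli then supplies a subsequence along which $\tilde\mu^\epsilon(t)\stackrel{\star}{\rightarrow}\mu(t)$ for every $t\in[t_0,T]$.

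Extracting a common subsequence and identifying the two limits is then routine: testing against product functions $\phi(t)\psi(x)$ with $\phi\in C_c^\infty((t_0,T))$ and $\psi\in C^{1+\alpha}_c(\Omega)$, the weak $L^p$ convergence gives $\int\phi(t)\langle\bar\mu^\epsilon(t),\psi\rangle\,dt\to\int\phi(t)\int_\Omega \mu\psi\,dx\,dt$, while dominated convergence applied to the pointwise weak-$\star$ convergence (dominating constant $CM\|\psi\|_{1+\alpha}$) gives the same limit expressed through the $C^{-(1+\alpha)}_c$-limit. The two limits therefore coincide almost everywhere on $(t_0,T)$ and, by continuity in $t$, on all of $[t_0,T]$. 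The main technical obstacle is the equicontinuity step: the Fokker–Planck equation naturally pairs $\mu^\epsilon$ with objects of higher smoothness than $C^{1+\alpha}_c$, so the weak-$\star$ continuity in the correct dual must be obtained by density from continuity in the weaker $C^{-(3+\alpha)}_c$ topology; the remaining ingredients are standard applications of Banach–Alaoglu and Arzelà–Ascoli.
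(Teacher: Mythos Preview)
Your proposal is correct, and the overall architecture (Banach--Alaoglu for the $L^p$ limit, an equicontinuity argument for the pointwise weak-$\star$ limit, identification of the two) matches the paper's. The genuine difference lies in how the equicontinuity in $t$ is obtained.

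You test the Fokker--Planck equation directly against a time-independent $\tilde\phi\in C^{3+\alpha}_c(\Omega)$; this works because such a $\tilde\phi$ is itself an admissible test function in the weak formulation on $\Omega_\epsilon$ (it solves the adjoint with right-hand side $\psi=-\text{tr}(aD^2\tilde\phi)+(H_p(x,Du^\epsilon)+\tilde b)\cdot D\tilde\phi\in C^{0,\alpha}$ and trivially satisfies the Neumann condition by compact support). The resulting modulus blows up with $\|\tilde\phi\|_{3+\alpha}$, so you recover the estimate for general $\phi\in C^{1+\alpha}_c$ by density plus the uniform dual bound. The paper instead keeps $\phi\in C^{1+\alpha}_c$ fixed from the start, solves the backward adjoint equation with terminal data $\phi$ at two different terminal times $s_1<s_2$, and uses Lemma~\ref{3.1} together with a time-shift trick to control $\|\psi^{s_1}-\psi^{s_2}\|_{1+\alpha}$ by $C\|\phi\|_{1+\alpha}|s_2-s_1|^{\alpha/2}$. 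This yields a uniform H\"older-$\alpha/2$ modulus directly, with no density step. Your route is more elementary (no adjoint solvability, no Lemma~\ref{3.1}, no time-translation), at the cost of a non-explicit modulus depending on the approximation; the paper's route uses more parabolic machinery but produces a quantitative H\"older estimate that is later reused. Both are complete.
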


\begin{proof} 
Since for any $t\in [t_0,T]$ we have
$$||\tilde{\mu}^{\epsilon}(t)||_{C^{-(1+α)}_c(\Omega)}\leq CM,$$
Banach-Alaoglu theorem and the characterization of $C^{-(1+\alpha)}(\Omega)$ yield the existence of a convergent subsequence in the weak-$\star$ topology to some $\tilde{\mu}(t)\in C^{-(1+\alpha)}_c(\Omega)$. We will show that $\tilde{\mu}(t)$ is continuous with respect to $t$.

\noindent
Let $f^{\epsilon}:=\text{div}(m^{\epsilon}H_{pp}(x,Du^{\epsilon})Dv^{\epsilon})$. We observe that $f^{\epsilon}$ is uniformly bounded in $W^{-1,\infty}(\Omega_{\epsilon})$, because of our assumptions on $H$, the bounds on $v^{\epsilon}$ (relation (\ref{5.1})) and $u^{\epsilon}$ (Proposition \ref{MFGL1}) and the fact that $||m^{\epsilon}||_{L^1}\leq 1$. We denote by $M_0$ the uniform bound, which is independent of $\epsilon$.
\vspace{1mm}

\noindent
We, also, let $\phi\in C^{1+\alpha}_c(\Omega)$ with compact support and let $\epsilon$ such that $\text{spt}(\phi)\subseteq \Omega_{\epsilon}$. Also, for $s\in [t_0,T]$ consider $\psi^s$ to be the unique solution to 
\begin{equation} \label{a1}
\begin{cases}
-\partial_t \psi^s-\text{div}(a(x)D\psi^s)+\left(H_p(x,Du^{\epsilon})+\tilde{b}(x)\right)\cdot D\psi^s=0, \text{ in }(t_0,s)\times \Omega_{\epsilon},\\
\psi^s(s)=\phi, \text{ in }\Omega_{\epsilon},\\
a(x)D\psi^s\cdot\nu(x)|_{\partial\Omega_{\epsilon}}=0.
\end{cases}
\end{equation}
Note that, by Lemma \ref{3.1}, 
\begin{equation} \label{a2}
    ||\psi^{s}||_{\frac{1+\alpha}{2},1+\alpha}\leq C||\phi||_{1+\alpha},
\end{equation}


\noindent
where $C$ is independent of $\epsilon$ and $s$. In addition, by the uniqueness of a solution to (\ref{a1}), if $t_0\leq s_1<s_2\leq T$, then the function 
$$\tilde{\psi}^{s_2}(\cdot,x):=\psi^{s_2}(\cdot+s_2-s_1,x)$$
solves 
\begin{equation}\label{zz}
    \begin{cases}
-\partial_t \tilde{\psi}^{s_2}-\text{div}(a(x)D\tilde{\psi}^{s_2})+\left(H_p(x,Du^{\epsilon}(\cdot+s_2-s_1,x))+\tilde{b}(x)\right)\cdot D\tilde{\psi}^{s_2}=0, \text{ in }(t_0,s_1)\times \Omega_{\epsilon},\\
\tilde{\psi}^{s_2}(s_1)=\phi, \text{ in }\Omega_{\epsilon},\\
a(x)D\tilde{\psi}^{s_2}\cdot\nu(x)|_{\partial\Omega_{\epsilon}}=0.
\end{cases}
\end{equation}
Therefore, combining this with (\ref{a1}) for $s=s_1$ and estimate (\ref{a2}), Lemma \ref{3.1} and the Lipschitz continuity of $H_p$, as we did in Theorem \ref{MFG sol}, we get
$$\sup_{t\in [t_0,s_1]}||\psi^{s_1}(t,\cdot)-\tilde{\psi}^{s_2}(t,\cdot)||_{1+\alpha}\leq C||\phi||_{1+\alpha}|s_2-s_1|^{\alpha/2}.$$
Moreover, in view of (\ref{a2}), we have
\begin{equation*}
\sup_{t\in [t_0,s_1]}||\tilde{\psi}^{s_2}(t,\cdot)-\psi^{s_2}(t,\cdot)||_{1+\alpha}\leq C||\phi||_{1+\alpha}|s_2-s_1|^{\alpha/2}.
\end{equation*}
Thus, combining the last two estimates
\begin{equation}\label{a3}
    \sup_{t\in [t_0,s_1]}||\psi^{s_2}(t,\cdot)-\psi^{s_2}(t,\cdot)||_{1+\alpha}\leq C||\phi||_{1+\alpha}|s_2-s_1|^{\alpha/2}.
\end{equation}
Now, we use $\psi^{s_1},\psi^{s_2}$ as test functions in the second equation of (\ref{epsilonGL}) and we discover
$$\langle \mu^{\epsilon}(s_1),\phi\rangle = \langle \mu_0,\psi^{s_1}(0)\rangle +\int_{t_0}^{s_1}\langle f^{\epsilon}(t),\psi^{s_1}(t)\rangle dt$$
and
$$\langle \mu^{\epsilon}(s_2),\phi\rangle = \langle \mu_0,\psi^{s_2}(0)\rangle +\int_{t_0}^{s_2}\langle f^{\epsilon}(t),\psi^{s_2}(t)\rangle dt.$$
Subtracting these two equalities we discover
\begin{multline*}
\langle \mu^{\epsilon}(s_2)- \mu^{\epsilon}(s_1),\phi\rangle = \langle \mu_0,\psi^{s_2}(t_0)-\psi^{s_1}(t_0)\rangle +\int_{t_0}^{s_1}\langle f^{\epsilon}(t),\psi^{s_2}(t)-\psi^{s_1}(t)\rangle dt\\ +\int_{s_1}^{s_2}\langle f^{\epsilon}(t),\psi^{s_2}(t)\rangle dt.
\end{multline*}
Now, (\ref{a2}), (\ref{a3}) and the fact that $f^{\epsilon}$ is uniformly bounded in $W^{-1,\infty}(\Omega_{\epsilon})$ give
\begin{multline*}
\langle \mu^{\epsilon}(s_2)- \mu^{\epsilon}(s_1),\phi\rangle  \leq \\ C||\phi||_{1+\alpha}\left( ||\mu_0||_{-(1+\alpha)}|s_2-s_1|^{\alpha/2}+\sup_{t\in [t_0,T]}||f^{\epsilon}(t)||_{W^{-1,\infty}}|s_2-s_1|^{\alpha/2}+\int_{s_1}^{s_2}||f^{\epsilon}(t)||_{W^{-1,\infty}}dt \right)\\
\leq ||\phi||_{1+\alpha}\left( ||\mu_0||_{-(1+\alpha)}|s_2-s_1|^{\alpha/2}+M_0|s_2-s_1|^{\alpha/2}+M_0|s_2-s_1| \right).
\end{multline*}
Finally, we let $\epsilon\rightarrow 0$ and we deduce
\begin{multline*}
\langle \tilde{\mu}(s_2)- \tilde{\mu}(s_1),\phi\rangle\leq \\ C||\phi||_{1+\alpha}\left( ||\mu_0||_{-(1+\alpha)}|s_2-s_1|^{\alpha/2}+M_0|s_2-s_1|^{\alpha/2}+M_0|s_2-s_1|\right)\;,
\end{multline*}
for all $\phi\in C^{1+\alpha}_c(\Omega)$. Hence,
\begin{multline*}
||\tilde{\mu}(s_2)- \tilde{\mu}(s_1) ||_{C^{-(1+\alpha)}_c(\Omega)}\leq \\  C||\phi||_{1+\alpha}\left( ||\mu_0||_{-(1+\alpha)}|s_2-s_1|^{\alpha/2}+M_0|s_2-s_1|^{\alpha/2}+M_0|s_2-s_1|\right)
\end{multline*}
and continuity follows.

\vspace{2mm}

\noindent
However, estimate (\ref{5.1}), along with the reflexivity of $L^p$, also implies that there exists a $\bar{\mu}\in L^p$ such that $\bar{\mu}^{\epsilon}\rightharpoonup \bar{\mu}\in L^p$ weakly in $L^p$. We will show that $\bar{\mu}$ is equal to $\tilde{\mu}$ and, as a consequence, we can call $\mu$ the common limit.

\noindent
Indeed, let $\phi\in C^{1+\alpha}(\overline{Q_T})$ with compact support. Then, for any $\epsilon$ small enough such that $\text{spt}(\phi)\subseteq [t_0,T]\times \Omega_{\epsilon}$, we have
$$\int_{t_0}^T\langle \mu^{\epsilon}(t),\phi(t)\rangle \;dt=\int_{t_0}^T\int_{\Omega}\bar{\mu}^{\epsilon}(t)\phi(x,t)\;dxdt$$
Letting $\epsilon\rightarrow 0$ yields
$$\int_{t_0}^T\langle \tilde{\mu}(t),\phi(t)\rangle\; dt=\int_{t_0}^T\int_{\Omega}\bar{\mu}(t,x)\phi(x,t)\;dxdt\;.$$
Thus, $\tilde{\mu}$ can be considered an element of $L^p(Q_T)$ with $\tilde{\mu}=\bar{\mu}$. The proof is complete.

\end{proof}

\vspace{7mm}

\begin{rem} \label{v}
\textbf{(i)} Another consequence of (\ref{5.1}) is that $||v^{\epsilon}||_{C^{1,2+\alpha}(\Omega_{\epsilon})}$ is uniformly bounded, thereupon there exists $v\in C([t_0,T]; C^2(\Omega))$ such that $v^{\epsilon}(t,\cdot) \rightarrow v(t,\cdot )$ locally uniformly, up to a subsequence. We, hence, constructed a pair $(v,\mu)\in C([t_0,T];C^2(\Omega))\times (C([t_0,T];C^{-(1+\alpha)}_c(\Omega))\cap L^p(Q_T))$.
\vspace{1mm}

\noindent
\textbf{(ii)} By (\ref{5.1}), again, $v^{\epsilon}(t,x)$ is Lipschitz with respect to $t$ with the Lipschitz constant being independent of $\epsilon$ . It follows that $v$ is also Lipschitz with a Lispchitz constant depending only on $\Omega$ and $||\mu_0||_{-(1+\alpha)}$.
\end{rem}

\vspace{5mm}

\begin{prop} \label{L}
Suppose that Hypotheses 2.3 are satisfied and $m_0\in L^1(\Omega)\cap\mathcal{P}(\Omega)$. Then, the pair $(v,\mu)$, as constructed in Proposition \ref{mu} and Remark \ref{v} is a solution to (\ref{GL}).
\end{prop}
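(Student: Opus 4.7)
The plan is to pass to the limit in the Neumann approximation \eqref{epsilonGL} as $\epsilon \to 0^+$, exploiting the convergences already assembled: locally uniform $C^{1,2}$ convergence $v^\epsilon \to v$ with uniform $C^{1,2+\alpha}$ bound (Remark \ref{v}), weak-$\star$ convergence $\tilde{\mu}^\epsilon(t) \stackrel{\star}{\rightharpoonup} \mu(t)$ in $C^{-(1+\alpha)}_c(\Omega)$ for every $t$, weak $L^p$ convergence $\bar{\mu}^\epsilon \rightharpoonup \mu$ (Proposition \ref{mu}), plus the uniform convergence of $Du^\epsilon$ and of $m^\epsilon$ from Proposition \ref{MFGL1}.

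For the first equation of \eqref{GL}, I test against $\varphi \in C^\infty_c((t_0,T) \times \Omega)$; for $\epsilon$ small enough, $\text{spt}(\varphi) \subset (t_0,T) \times \Omega_\epsilon$, so multiplying the first line of \eqref{epsilonGL} by $\varphi$ and integrating gives a distributional identity in which each local term passes to the limit by the $C^{1,2}$ convergence of $v^\epsilon$ and the uniform convergence of $H_p(\cdot,Du^\epsilon)$. For the nonlocal source, I split
\begin{multline*}
\left\langle \tfrac{\delta F^\epsilon}{\delta m}(x, m^\epsilon(t), \cdot), \mu^\epsilon(t) \right\rangle - \left\langle \tfrac{\delta F}{\delta m}(x, m(t), \cdot), \mu(t) \right\rangle \\
= \left\langle \tfrac{\delta F^\epsilon}{\delta m}(x, m^\epsilon(t), \cdot) - \tfrac{\delta F}{\delta m}(x, m(t), \cdot), \mu^\epsilon(t) \right\rangle + \left\langle \tfrac{\delta F}{\delta m}(x, m(t), \cdot), \mu^\epsilon(t) - \mu(t) \right\rangle;
\end{multline*}
the first bracket is controlled by Hypothesis (5) together with the $\text{Lip}_\alpha$ bound and the uniform $\|\mu^\epsilon(t)\|_{-(1+\alpha)}$, while the second vanishes by weak-$\star$ convergence. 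The terminal condition is analogous, using that $g_\epsilon \to 0$ in $C^{2+\alpha}$ (since $v_T$ is compactly supported, $aD_x v_T \cdot \nu|_{\partial \Omega} = 0$ trivially).

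For the Fokker--Planck equation in the sense of Definition 5.1, I fix $\psi \in C^{0,\alpha}(Q_T)$, $\xi \in C^{1+\alpha}_c(\Omega)$, and for $\epsilon$ small with $\text{spt}(\xi) \subset \Omega_\epsilon$ I introduce the backward Neumann problem for $\phi^\epsilon$ with drift $H_p(\cdot, Du^\epsilon) + \tilde{b}$, source $\psi$ and terminal datum $\xi$. Proposition 3.4 of [1] together with Lemmas \ref{3.1}--\ref{3.2} give convergence of $\phi^\epsilon$ to the $\phi$ of Definition 5.1 with uniform $C^{(1+\alpha)/2, 1+\alpha}$ and Lipschitz-in-$x$ bounds. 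Integrating the source $f^\epsilon = \text{div}(m^\epsilon H_{pp}(\cdot, Du^\epsilon) Dv^\epsilon + c)$ by parts (the Neumann condition on $\phi^\epsilon$ and the mixed boundary condition on $\mu^\epsilon$ kill all boundary terms), the weak formulation for $\mu^\epsilon$ on $\Omega_\epsilon$ reads
$$
\langle \mu^\epsilon(t),\xi\rangle + \int_{t_0}^t\!\!\int_{\Omega_\epsilon} \mu^\epsilon \psi \, dx\,ds = \langle \mu_0, \phi^\epsilon(t_0,\cdot)\rangle - \int_{t_0}^t\!\!\int_{\Omega_\epsilon} (m^\epsilon H_{pp}(x,Du^\epsilon) Dv^\epsilon + c)\cdot D\phi^\epsilon \, dx\,ds,
$$
and every term on the left passes to the limit by the weak-$\star$ and weak $L^p$ convergences of $\mu^\epsilon$.

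The main obstacle is the source integral on the right. The pairing $m^\epsilon \cdot (H_{pp}(x,Du^\epsilon) Dv^\epsilon \cdot D\phi^\epsilon)$ combines a weakly $L^p$ convergent factor with factors converging only locally, but $H_{pp}(\cdot, Du^\epsilon) Dv^\epsilon \cdot D\phi^\epsilon$ is uniformly bounded (by Hypothesis (2), the $C^{1,2+\alpha}$ bound on $v^\epsilon$, and the uniform Lipschitz bound on $\phi^\epsilon$) and converges uniformly on compact subsets of $\Omega$. Together with the uniform $L^p$ bound on $m^\epsilon$ and the fact that $m^\epsilon(\Omega \setminus \Omega_\delta) \to 0$ uniformly in $\epsilon$ as $\delta \to 0^+$ (a consequence of the invariance condition (1.2), cf.\ [1]), this identifies the limit; the $c \cdot D\phi^\epsilon$ term is handled by dominated convergence using Lemma \ref{3.2}, and $\langle \mu_0, \phi^\epsilon(t_0,\cdot)\rangle \to \langle \mu_0, \phi(t_0,\cdot)\rangle$ follows from the $C^{1+\alpha}$ convergence of $\phi^\epsilon(t_0,\cdot)$.
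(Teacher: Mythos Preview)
Your proposal is correct and follows essentially the same route as the paper: pass to the limit in the Neumann approximations \eqref{epsilonGL}, verifying the terminal condition, the first equation distributionally, and the Fokker--Planck equation via an auxiliary backward test function $\phi^\epsilon$. The only notable presentational differences are that the paper approximates $\xi\in C^{1+\alpha}_c(\Omega)$ by compactly supported $\xi^\epsilon$ with $\text{spt}(\xi^\epsilon)\subset\Omega_\epsilon$ (since $C^{1+\alpha}_c$ is defined as a closure, $\xi$ need not itself be compactly supported), and for the source integral the paper argues directly that $H_{pp}(x,Du^\epsilon)Dv^\epsilon\cdot D\phi^\epsilon\to H_{pp}(x,Du)Dv\cdot D\phi$ strongly in $L^{p'}$ (uniform boundedness plus a.e.\ convergence and dominated convergence) and pairs this with weak $L^p$ convergence of $m^\epsilon$, rather than invoking a tightness property of $m^\epsilon$ near the boundary.
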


\begin{proof} Without loss of generality we assume that $t_0=0$.
\vspace{2mm}

\noindent
\textit{Step 1:} We will first check that the boundary conditions are satisfied. It suffices to show that
$$\frac{\delta G^{\epsilon}}{\delta m}(x,m^ε(T))(\mu^ε (T))+v_T(x)-g_{\epsilon}(x)\rightarrow \frac{\delta G}{\delta m}(x,m(T))(\mu (T))+v_T(x)\text{  as }\epsilon\rightarrow 0\; ;$$
almost everywhere.

\noindent
We have
\begin{multline*}\left|\left| \frac{\delta G^{\epsilon}}{\delta m}(x,m^{\epsilon}(T),\cdot)-\frac{\delta G}{\delta m}(x,m(T),\cdot)\right|\right|_{C^{1+\alpha}(\Omega_{\epsilon})}\leq \\ 
\left|\left| \frac{\delta G^{\epsilon}}{\delta m}(x,m^{\epsilon}(T),\cdot)-\frac{\delta G}{\delta m}(x,m^{\epsilon}(T),\cdot)\right|\right|_{C^{1+\alpha}(\Omega_{\epsilon})}
+\left|\left| \frac{\delta G}{\delta m}(x,m^{\epsilon}(T),\cdot)-\frac{\delta G}{\delta m}(x,m(T),\cdot)\right|\right|_{C^{1+\alpha}(\Omega)}.
\end{multline*}
\noindent
Since, $m^{\epsilon}(T)\rightarrow m(T)$ in $L^1(\Omega)$ (Proposition 4.3 of [1]), we have
$$\lim_{\epsilon\rightarrow 0^+}\left|\left| \frac{\delta G}{\delta m}(x,m^{\epsilon}(T),\cdot)-\frac{\delta G}{\delta m}(x,m(T),\cdot)\right|\right|_{C^{1+\alpha}(\Omega)}=0.$$
In addition, again due to the properties of $G$,

\begin{multline*}
\limsup_{\epsilon\rightarrow 0^+}\left|\left| \frac{\delta G^{\epsilon}}{\delta m}(x,m^{\epsilon}(T),\cdot)-\frac{\delta G}{\delta m}(x,m^{\epsilon}(T),\cdot)\right|\right|_{C^{1+\alpha}(\Omega_{\epsilon})}\leq \\ \lim_{\epsilon\rightarrow 0^+}\sup_{m\in\mathcal{P}^{sub}(\Omega)}\left|\left| \frac{\delta G^{\epsilon}}{\delta m}(x,m,\cdot)-\frac{\delta G}{\delta m}(x,m,\cdot)\right|\right|_{C^{1+\alpha}(\Omega_{\epsilon})}=0.
\end{multline*}
Thus, 
$$\lim_{\epsilon\rightarrow 0^+}\left|\left| \frac{\delta G^{\epsilon}}{\delta m}(x,m^{\epsilon}(T),\cdot)-\frac{\delta G}{\delta m}(x,m(T),\cdot)\right|\right|_{C^{1+\alpha}(\Omega_{\epsilon})}=0$$
and hence,
$$\frac{\delta G^{\epsilon}}{\delta m}(x,m^{\epsilon}(T),\cdot)\rightarrow \frac{\delta G}{\delta m}(x,m(T),\cdot)\text{ in }C^{1+\alpha}\text{ as }\epsilon\rightarrow 0.$$
In consequence, combining this with the weak-$\star$ convergence of $\mu^{\epsilon}$ to $\mu$, we get
$$\frac{\delta G^{\epsilon}}{\delta m}(x,m^{\epsilon}(T),\cdot)(\mu^{\epsilon}(T))\rightarrow \frac{\delta G}{\delta m}(x,m(T),\cdot)(\mu(T)).$$

\noindent 
Finally, in view of the properties of $\mathcal{N}_{\epsilon}$, we discover
\begin{align*}
||g_{\epsilon}||_{2+\alpha}&\leq C||a(\cdot)D_xv_T(\cdot)||_{C^{1+\alpha}(\partial\Omega_{\epsilon})},
\end{align*}
where the constant $C$ does not depend on $\epsilon$. Letting $\epsilon\rightarrow 0$ we get, by the properties of $G$ and $v_T$, that the right-hand side converges to
$$C||a(\cdot)D_xv_T(\cdot)||_{C^{1+\alpha}(\partial\Omega)}=0,$$
so $g_{\epsilon}\rightarrow 0$. The result follows.
\vspace{2mm}

\noindent
\textit{Step 2:} We will prove that the pair $(v,\mu)$ satisfies the two equations of the system separately. Let $\phi\in C^{\infty}_c((0,T]\times \Omega)$. For $\epsilon $ close to $0$, such that the support of $\phi$ is contained in $(0,T]\times \Omega_{\epsilon}$, $\phi$ is an admissible test function for (\ref{epsilonGL}). Integrating the first equation against $\phi$ and integrating by parts, we get
\begin{equation}
    \begin{split}
\int_0^T\int_{\Omega_{\epsilon}}v^{\epsilon}(\partial_t \phi)\;dxdt-
\int_0^T\int_{\Omega_{\epsilon}}\text{tr}(a(x)D^2v^{\epsilon})\phi\; dxdt+\int_0^T\int_{\Omega_{\epsilon}}H_p(x,Du^{\epsilon})\cdot Dv^{\epsilon}\phi \;dxdt=\\ \int_{0}^T\int_{\Omega_{\epsilon}}h(t,x)\phi(t,x)\; dxdt+ \int_{\Omega_{\epsilon}}\int_0^T\int_{\Omega_{\epsilon}}\frac{\delta F^{\epsilon}}{\delta m}(x,m^{\epsilon}(t),y)\phi(x,t)\mu^{\epsilon}(t,y)\;dydtdx\\
+\int_{\Omega_{\epsilon}}\int_0^T\int_{\Omega_{\epsilon}}\left(\frac{\delta G^{\epsilon}}{\delta m}(x,m^{\epsilon}(t),y)\mu^{\epsilon}(y,t)+v_T(x)-g_{\epsilon}(x)\right)\phi \; dydtdx\;.
\end{split}
\end{equation}
It is clear that, due to the convergence of the $v^{\epsilon}$ to $v$ and of $u^{\epsilon}$ to $u$, as $\epsilon\rightarrow 0$, that the left-hand side of $(5.9)$, by the dominated convergence theorem, converges to
$$\int_0^T\int_{\Omega}v(\partial_t \phi)\;dxdt-
\int_0^T\int_{\Omega}\text{tr}(a(x)D^2v)\phi \;dxdt+\int_0^T\int_{\Omega}H_p(x,Du)\cdot (Dv)\phi\; dxdt.$$
On the other hand, the functions
$$h^{\epsilon}(x):=\int_0^T\int_{\Omega_{\epsilon}}\frac{\delta F^{\epsilon}}{\delta m}(x,m^{\epsilon}(t),y)\phi(x,t)\mu^{\epsilon}(t,y)\;dydt$$
are uniformly bounded, because of the $L^p-$bound for $\mu^{\epsilon}$ and the properties of $F$. Since $m^{\epsilon}\rightarrow m$ in $C([0,T]; L^1(\Omega))$ (Proposition 4.3 of [1]) and $\mu^{\epsilon}\rightharpoonup \mu$ weakly in $L^p$, we also get that
$$h^{\epsilon}(x)\rightarrow \int_0^T\int_{\Omega}\frac{\delta F}{\delta m}(x,m(t),y)\phi(x,t)\mu(t,y)\;dydt\text{  a.e}.$$
Finally, by our arguments in Step 1 and the dominated convergence theorem, as $\epsilon\rightarrow 0$
\begin{multline*}
\int_{\Omega_{\epsilon}}\int_0^T\int_{\Omega_{\epsilon}}\left(\frac{\delta G^{\epsilon}}{\delta m}(x,m^{\epsilon}(t),y)\mu^{\epsilon}(y,t)+v_T(x)-g_{\epsilon}(x)\right)\phi \; dydtdx\rightarrow \\ \int_{\Omega}\int_0^T\int_{\Omega}\left(\frac{\delta G}{\delta m}(x,m(t),y)\mu(y,t)+v_T(x)\right)\phi \; dydtdx\;.
\end{multline*}
Thus, again by the dominated convergence theorem, we deduce that the right-hand side of $(5.9)$ converges to 
\begin{multline*}
    \int_{0}^T\int_{\Omega}h(t,x)\phi(t,x)\; dxdt+ \int_{\Omega}\int_0^T\int_{\Omega}\frac{\delta F}{\delta m}(x,m(t),y)\phi(x,t)\mu(t,y)\;dydtdx\\
+\int_{\Omega}\int_0^T\int_{\Omega}\left(\frac{\delta G}{\delta m}(x,m(t),y)\mu(y,t)+v_T(x)\right)\phi \; dydtdx\;.
\end{multline*}
We conclude that $(v,\mu)$ solves the first equation of the linearized system.
\vspace{3mm}

\noindent
To prove that the second equation is satisfied, let $t\in (0,T)$, $f(s,x)\in C^{0,\alpha}(Q_T)$ and $\phi$ be such that
\begin{equation}\label{eqtest}
\begin{cases}
-\partial_s\phi-\text{tr}(a(x)D^2\phi)+H_p(x,Du)\cdot D\phi=f(s,x),\text{ in }(0,t)\times\Omega,\\
\phi(t)=\xi,\text{ in }\Omega,
\end{cases}
\end{equation}
where $\xi\in C^{1+\alpha}_c(\Omega)$. By definition, we know that there exists a sequence $\xi^{\epsilon}\in C^{1+\alpha}_c(\Omega)$ such that $\xi^{\epsilon}\rightarrow \xi$ as $\epsilon\rightarrow 0$ in $C^{1+\alpha}$ with $\text{spt}(\xi^{\epsilon})\subset \Omega_{\epsilon}$.

\noindent
Also, let $\phi^{\epsilon}$ be the solution of
$$\begin{cases}
-\partial_t\phi^{\epsilon}-\text{tr}(a(x)D^2\phi^{\epsilon})+H_p(x,Du^{\epsilon})\cdot D\phi^{\epsilon}=
-\partial_t\phi-\text{tr}(a(x)D^2\phi)+H_p(x,Du)\cdot D\phi,\\
\hfill \hfill \hfill\hfill\hfill\hfill \text{ in }(0,t)\times\Omega_{\epsilon},\\
\phi^{\epsilon}(t)=\xi^{\epsilon},\text{ in }\Omega_{\epsilon},\\
a(x)D\phi\cdot \nu(x)|_{\partial\Omega_{\epsilon}}=0\;.
\end{cases}.$$
Testing the second equation of (\ref{epsilonGL}) with $\phi^{\epsilon}$ yields
\begin{equation}\label{xx}
\begin{split}
\int_0^t\int_{\Omega_{\epsilon}}\mu^{\epsilon}(-\partial_s\phi-\text{tr}(a(x)D^2\phi)+H_p(x,Du)\cdot D\phi)\;dxds-\langle \mu_0,\phi^{\epsilon}(0)\rangle=\\
-\int_0^t\int_{\Omega_{\epsilon}}c\cdot D\phi^{\epsilon}\;dxds
-\int_0^t\int_{\Omega_{\epsilon}}m^{\epsilon}H_{pp}(x,Du^{\epsilon})Dv^{\epsilon}\cdot D\phi^{\epsilon}\;dxds-\langle \mu^{\epsilon}(t),\xi^{\epsilon}\rangle.
\end{split}
\end{equation}

\noindent
Note that, since $H_p(x,Du^{\epsilon})\in C^{\frac{\alpha}{2},\alpha}$, Lemma \ref{3.1} implies
$$||\phi^{\epsilon}||_{\frac{1+\alpha}{2},1+\alpha}\leq C(||\xi^{\epsilon}||_{1+\alpha}+||f||_{\infty}),\text{ in }(0,t)\times\Omega_{\epsilon},$$
where $C$ is independent of $\epsilon$. In addition, since $\phi^{\epsilon}$ is Hölder continuous, looking at the equation satisfied by $\tilde{\phi^{\epsilon}}(s,x)=(t-s)\phi^{\epsilon}(s,x)$, Theorem 5.1.21 of [6] implies that 
$||\phi^{\epsilon}(s,\cdot)||_{C^{2+\alpha}(\Omega_{\epsilon})}$ is uniformly bounded; independently of $\epsilon$, for any $s\in [0,t)$. So, by Arzela-Ascoli, since $\phi^{\epsilon}\rightarrow \phi$ in $C(0,T; L^p(\Omega))$ (Proposition 3.4 of [1]), we get 
\begin{multline}\label{result}
\phi^{\epsilon}(s,\cdot)\rightarrow \phi(s,\cdot) \text{ locally uniformly in } C^{0,1}\text{ for }s\in [0,t],\;\\ \phi^{\epsilon}(s,\cdot)\rightarrow \phi(s,\cdot)\text{ locally uniformly in } C^{2},\text{ for any }s\in [0,t)\\
||\phi||_{\frac{1+\alpha}{2},1+\alpha}\leq C(||\xi||_{1+\alpha}+||f||_{\infty})\text{ and } ||\phi(s,\cdot)||_{2+\alpha}\leq \frac{C}{t-s}.
\end{multline}
In addition, we know that $Du^{\epsilon}(s,\cdot), Dv^{\epsilon}(s,\cdot)$ are uniformly bounded and converge to $Du, Dv$ a.e, respectively, hence we deduce that 
$$H_{pp}(x,Du^{\epsilon})Dv^{\epsilon}\cdot D\phi^{\epsilon}\rightarrow H_{pp}(x,Du)Dv\cdot D\phi\text{ in }L^p.$$
Combining this with the fact that $m^{\epsilon}\rightharpoonup m$ weakly in $L^p$ (by Proposition \ref{5.6.[2]}), we get
$$-\int_0^t\int_{\Omega_{\epsilon}}m^{\epsilon}H_{pp}(x,Du^{\epsilon})Dv^{\epsilon}\cdot D\phi^{\epsilon}\;dxds\rightarrow -\int_0^t\int_{\Omega_{\epsilon}}mH_{pp}(x,Du)Dv\cdot D\phi \;dxds.$$
Finally, by the weak-$(\star)$ convergence,
$$\langle \mu^{\epsilon}(t),\xi^{\epsilon}\rangle\rightarrow \langle \mu(t),\xi\rangle.$$
Passing to the limit in (\ref{xx}), having the above observations in mind, we deduce
$$\int_0^t\int_{\Omega}\mu(-\partial_t\phi-\text{tr}(a(x)D^2\phi)+H_p(x,Du)\cdot D\phi)\;dxds-\langle \mu_0,\phi(0)\rangle=$$
$$-\int_0^t\int_{\Omega}c\cdot D\phi\;dxds-\int_0^t\int_{\Omega}mH_{pp}(x,Du)Dv\cdot D\phi\; dxds-\langle \mu(t),\xi\rangle.$$
Thus, by definition, $(v,\mu)$ satisfies the second equation of (\ref{GL}). 
\end{proof}

\vspace{5mm}

\begin{prop} \label{es3}
Let $m_0\in L^1(\Omega)\cap\mathcal{P}(\Omega)$. Then, $(v,\mu)$, constructed in Proposition 5.3 and Remark 5.4, satisfies the estimate
$$\sup_{t\in [t_0,T]}||v(t,\cdot)||_{2+\alpha}+||\mu||_{L^p}+\sup_{t\in [t_0,T]}||\mu(t)||_{C^{-(1+\alpha)}_c(\Omega)}\leq CM,$$
where $C$ is the constant from (\ref{5.1}) and where
$$M=||v_T||_{2+\alpha}+||\mu_0||_{-(1+\alpha)}+||h||_{0,\alpha}+||c||_{L^1}.$$
\end{prop}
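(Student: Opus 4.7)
The proof will consist of passing to the limit in the three terms of estimate (5.5), which already provides the desired bound on the approximating pair $(v^\epsilon, \mu^\epsilon)$ uniformly in $\epsilon$. The plan is to exploit the modes of convergence established in Proposition \ref{mu} and Remark \ref{v}, together with lower semicontinuity of the relevant norms under those convergences.

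First, for the bound on $v$, recall that by (5.5) we have $\|v^\epsilon\|_{1,2+\alpha;\,Q_T^\epsilon}\leq CM$ uniformly in $\epsilon$. Fix any compact $K\Subset \Omega$; then $K\subset \Omega_\epsilon$ for all $\epsilon$ sufficiently small, so the family $\{v^\epsilon\}$ is uniformly bounded in $C^{1,2+\alpha}([t_0,T]\times K)$. By Remark \ref{v}, $v^\epsilon\to v$ in $C([t_0,T];C^2(K))$, and since Hölder seminorms of order $\alpha$ are lower semicontinuous under uniform convergence of the relevant derivatives, we obtain
\[
\sup_{t\in[t_0,T]}\|v(t,\cdot)\|_{C^{2+\alpha}(K)}\;\leq\; CM.
\]
Exhausting $\Omega$ by such compact subsets and using the continuity of $v$ and its derivatives up to the boundary (which follows from the uniform $C^{1,2+\alpha}$-bound together with the uniform interior ball property of $\Omega_\epsilon$ from Proposition \ref{domains}), we conclude $\sup_{t\in[t_0,T]}\|v(t,\cdot)\|_{2+\alpha}\leq CM$.

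Next, for the $L^p$ bound on $\mu$, Proposition \ref{mu} gives $\bar{\mu}^\epsilon\rightharpoonup \mu$ weakly in $L^p(Q_T)$, where $\bar\mu^\epsilon$ is the extension of $\mu^\epsilon$ by zero to all of $\Omega$. By weak lower semicontinuity of the $L^p$-norm,
\[
\|\mu\|_{L^p(Q_T)}\;\leq\;\liminf_{\epsilon\to 0}\|\bar\mu^\epsilon\|_{L^p(Q_T)}\;\leq\; CM,
\]
where the last inequality is (5.5). For the dual-norm bound on $\mu(t)$, Proposition \ref{mu} yields $\tilde\mu^\epsilon(t)\stackrel{\star}{\rightharpoonup}\mu(t)$ weakly-$\star$ in $C^{-(1+\alpha)}_c(\Omega)$ for each $t\in[t_0,T]$, and by the Hahn--Banach construction preceding Proposition \ref{mu} we have $\|\tilde\mu^\epsilon(t)\|_{C^{-(1+\alpha)}_c(\Omega)}\leq CM$ uniformly in $\epsilon$ and $t$. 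Weak-$\star$ lower semicontinuity of the dual norm then gives
\[
\sup_{t\in[t_0,T]}\|\mu(t)\|_{C^{-(1+\alpha)}_c(\Omega)}\;\leq\; CM.
\]
Summing the three bounds yields the claimed estimate with the same constant $C$ as in (5.5).

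The only nontrivial issue in this plan is the first step: the $C^{2+\alpha}$ bound on $v$ is initially an interior estimate (obtained on compact subsets of $\Omega$), whereas the statement requires a bound on all of $\Omega$. This is reconciled by the fact that the constants in (5.5), and in particular the Hölder constants of $D^2_x v^\epsilon$ on $\Omega_\epsilon$, are independent of $\epsilon$ thanks to Remark 3.4 and Proposition \ref{domains}; hence the Hölder seminorm of $D^2_x v$ at a pair $x,y\in\Omega$ can be estimated after approximating $x,y$ by interior points, giving the uniform bound up to the boundary without any additional compatibility requirement on $v$ at $\partial\Omega$.
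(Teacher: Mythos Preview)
Your proof is correct and follows essentially the same approach as the paper: pass to the limit in the uniform estimate (5.5) using the convergences from Proposition \ref{mu} and Remark \ref{v}, invoking lower semicontinuity of the $L^p$ norm under weak convergence, of the dual norm under weak-$\star$ convergence, and of the H\"older norm under locally uniform $C^{1,2}$ convergence on compact subsets. Your final paragraph over-thinks the boundary issue: since $\|\cdot\|_{2+\alpha}$ is the norm on the open set $\Omega$, any pair $x,y\in\Omega$ already lies in some compact $K\Subset\Omega$, so the bound on compacts immediately gives the bound on $\Omega$ without any appeal to continuity up to $\partial\Omega$.
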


\begin{proof}
The estimate for the second and the third term follows from (\ref{5.1}) and the lower semicontinuity of the norms with respect to the weak/weak-$\star$ convergence. Thus, it suffices to prove the estimate for the first term. Indeed, note that by (\ref{5.1})
$$||v^{\epsilon}||_{1,2+\alpha}\leq CM\text{ in } \Omega_{\epsilon}\; ,$$ 
where the constant $C$ does not depend on $\epsilon$. In consequence, for every $K$ compact subset of $\Omega$, by the convergence of $v^{\epsilon}$ to $v$ we established earlier, we get
$$||v(t,\cdot )||_{C^{2+\alpha}(K)}\leq CM\; .$$
Hence,
$$\sup_{t\in [t_0,T]}||v(t,\cdot)||_{2+\alpha}\leq CM\; ,$$
which is what we wanted.
\end{proof}

\vspace{6mm}

\noindent
We are, now, ready to present the proof of the existence of a solution to (\ref{GL}), when $m_0\in\mathcal{P}(\Omega)$. Before we state the Theorem, we would need the following proposition about the Fokker-Planck equation, which will be needed for the proof of the uniqueness.
\vspace{3mm}

\begin{prop} \label{FPf}
Given the assumptions in Definition 5.1, there exists a unique solution to the Fokker-Planck equation (\ref{fFP}). This solution satisfies
$$\sup_{t\in[t_0,T]}||\mu(t)||_{C^{-(1+\alpha)}_c(\Omega)}+||\mu||_{L^p}\leq C \left( ||\mu_0||_{-(1+\alpha)}+||f||_{L^1(W^{-1,\infty})}\right).$$
\end{prop}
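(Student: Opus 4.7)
The plan is to follow the same $\epsilon$-approximation scheme used in Propositions \ref{mu} and \ref{L}: solve a Neumann variant of \eqref{fFP} on $\Omega_{\epsilon}$ for each small $\epsilon$, use the $\epsilon$-uniform bounds from [2] (uniform thanks to Proposition \ref{domains} and Remark 3.6), and pass to the limit; uniqueness is a short duality argument built directly into Definition 5.1.

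For existence, for $\epsilon\in(0,\epsilon_0/3)$ I would consider
\begin{equation*}
\begin{cases}
\partial_t \mu^{\epsilon}-\text{div}(a(x)D\mu^{\epsilon})-\text{div}(\mu^{\epsilon}b)=f, & \text{in }(t_0,T)\times\Omega_{\epsilon},\\
(a(x)D\mu^{\epsilon}+\mu^{\epsilon}b)\cdot \nu|_{\partial \Omega_{\epsilon}}=0,\\
\mu^{\epsilon}(t_0)=\mu_0|_{\Omega_{\epsilon}},
\end{cases}
\end{equation*}
whose well-posedness and the bound
$$\sup_{t\in[t_0,T]}||\mu^{\epsilon}(t)||_{C^{-(1+\alpha),N}(\Omega_{\epsilon})}+||\mu^{\epsilon}||_{L^p((t_0,T)\times\Omega_{\epsilon})}\leq C\left(||\mu_0||_{-(1+\alpha)}+||f||_{L^1(W^{-1,\infty})}\right),$$
with $C$ independent of $\epsilon$, are the same inputs from [2] quoted just before Proposition \ref{mu}. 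Extending $\mu^{\epsilon}$ by zero outside $\Omega_{\epsilon}$ for the $L^p$-piece and applying Hahn-Banach to extend each $\mu^{\epsilon}(t)$ to an element of $C^{-(1+\alpha)}_c(\Omega)$ of equivalent norm, Banach-Alaoglu together with the reflexivity of $L^p$ produce weak-$\star$ and weak limits $\mu$ along a subsequence; the two limits agree as elements of $L^p(Q_T)$ by the same duality computation used at the end of the proof of Proposition \ref{mu}.

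Continuity $\mu\in C([t_0,T];C^{-(1+\alpha)}_c(\Omega))$ is obtained by repeating the equicontinuity argument of Proposition \ref{mu}, with $f^{\epsilon}$ replaced by $f$: using the Neumann backward problem with drift $b$ to construct test functions $\psi^{s}_{\epsilon}$ bounded in $C^{\frac{1+\alpha}{2},1+\alpha}$ via Lemma \ref{3.1}, one obtains
$$|\langle \mu^{\epsilon}(s_2)-\mu^{\epsilon}(s_1),\phi\rangle|\leq C||\phi||_{1+\alpha}\left(||\mu_0||_{-(1+\alpha)}+||f||_{L^1(W^{-1,\infty})}\right)\omega(|s_2-s_1|)$$
for $\phi\in C^{1+\alpha}_c(\Omega)$ with $\text{spt}(\phi)\subset \Omega_{\epsilon}$, with a modulus $\omega$ independent of $\epsilon$. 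Passing to the limit against an arbitrary admissible pair $(\psi,\xi)$ from Definition 5.1, exactly as in Step 2 of the proof of Proposition \ref{L}, shows that $\mu$ is a weak solution of \eqref{fFP}; the stated estimate then follows from the uniform bound and lower semicontinuity under weak/weak-$\star$ convergence.

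For uniqueness, let $\mu_1,\mu_2$ be two solutions and set $\nu:=\mu_1-\mu_2$, a solution of \eqref{fFP} with $f=0$ and $\nu(t_0)=0$. For any $t\in(t_0,T]$ and any $\xi\in C^{1+\alpha}_c(\Omega)$, choose $\psi\equiv 0$ in Definition 5.1; the corresponding dual function $\phi$ is produced, once more, as the $\epsilon\to 0$ limit of the Neumann backward problem on $\Omega_{\epsilon}$ (as with $\psi^{s}$ in Proposition \ref{mu}), and the weak formulation collapses to $\langle \nu(t),\xi\rangle=0$. Varying $\xi$ gives $\nu(t)=0$ in $C^{-(1+\alpha)}_c(\Omega)$. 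The only non-routine point is verifying that the dual $\phi$ in Definition 5.1 always exists in a class that makes the identity meaningful; this is precisely what the Neumann approximation, together with Lemma \ref{3.1} and Lemma \ref{3.2}, supplies, and it is the main obstacle to be isolated before everything else becomes an exact replay of the limiting procedures already carried out in Propositions \ref{mu} and \ref{L}.
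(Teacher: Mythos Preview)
Your existence argument via Neumann approximation on $\Omega_\epsilon$ is exactly the paper's route, and your uniqueness argument (test with $\psi\equiv 0$ and an arbitrary $\xi\in C^{1+\alpha}_c$) is correct. The one organizational difference worth flagging is how the estimate is obtained: you extract it from the $\epsilon$-uniform bounds by lower semicontinuity, which a priori only bounds the \emph{constructed} solution, and then prove uniqueness separately. The paper instead derives the estimate intrinsically from Definition 5.1 for \emph{any} weak solution---choosing $\psi=0$ and bounding $\phi$ via Lemma \ref{3.1} gives the $C^{-(1+\alpha)}_c$ part, while choosing $\xi=0$ and invoking the parabolic $L^{p'}$ estimate (Theorem IV.9.1 of [4]) for $\phi$ gives the $L^p$ part---so that uniqueness is an immediate consequence of linearity and the estimate. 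Your approach works, but the paper's ordering is slightly cleaner: it avoids having to argue separately that the dual $\phi$ exists in the uniqueness step, since that existence is already embedded in the estimate proof.
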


\begin{proof}
\textit{Existence:} Let $\mu^{\epsilon}$ be the solution to the problem
$$\begin{cases}
\partial_t \mu^{\epsilon}-\text{div}(a(x)D\mu^{\epsilon})-\text{div}(\mu^{\epsilon} b)=f ,\text{ in }(t_0,T)\times\Omega_{\epsilon},\\
\mu^{\epsilon}(t_0)=\mu_0|_{C^{1+\alpha}(\Omega_{\epsilon})},\\
(a(x)D\mu^{\epsilon}+\mu^{\epsilon} b)\cdot \nu(x)|_{\partial\Omega_{\epsilon}}=0.
\end{cases}$$
By Proposition 5.3 of [2], we know that $\mu^{\epsilon}$ satisfies the estimate.
$$\sup_{t\in[t_0,T]}||\mu^{\epsilon}(t)||_{C^{-(1+\alpha)}_c(\Omega_{\epsilon})}+||\mu^{\epsilon}||_{L^p}\leq C\left( ||\mu_0||_{-(1+\alpha)}+||f||_{L^1(W^{-1,\infty})}\right).$$
The existence of a solution to (\ref{fFP}) follows by an argument similar as in Proposition \ref{mu} and Step 2 of Proposition \ref{L}.
\vspace{2mm}

\noindent
\textit{Uniqueness:} Follows directly from the estimate.
\vspace{2mm}

\noindent
\textit{Estimate:} The estimate follows from an argument similar to the one in Proposition 5.3 of [2]. Indeed, in the definition, choosing $\psi=0$ we get
$$\langle \mu(t),\xi\rangle =\langle \mu_0,\phi(0,\cdot)\rangle +\int_0^t\langle f(s),\phi(s,\cdot)\rangle ds,$$
hence, taking supremum over $\xi$,
$$||\mu(t)||_{C^{-(1+\alpha)}_c(\Omega)}\leq C\left( ||\mu_0||_{-(1+\alpha)}+\int_0^t||f(s)||_{W^{1,\infty}}ds\right).$$
The first part of the estimate follows. For the second part we choose $\xi=0$. Then,
$$\int_0^T\int_{\Omega}\mu\psi\;dxds=\langle \mu_0,\phi(0,\cdot)\rangle +\int_0^T\langle f(s),\psi(s,\cdot)\rangle \;ds.$$
In consequence, as in Proposition 5.3 of [2], invoking Theorem IV. 9.1, we get $||\phi||_{\frac{1+\alpha}{2},1+\alpha}\leq C||\psi||_{L^{p'}}$ and hence
$$\int_0^T\int_{\Omega}\mu\psi\;dxds\leq C||\psi||_{L^{p'}}\left( ||\mu_0||_{-(1+\alpha)}+\int_0^t||f(s)||_{W^{1,\infty}}ds\right).$$
Thus, since $C^{0,\alpha}$ is dense in $L^{p'}$, the $L^p$ estimate follows.
\end{proof}

\vspace{4mm}

\begin{thm}\label{generalGL}
Assume Hypotheses 2.3. Then, there exists a unique solution \\$(v,\mu)\in C([t_0,T];C^{2+\alpha}(\Omega))\times (C([t_0,T];C^{-(1+\alpha)}_c(\Omega))\cap L^p([t_0,T]\times\Omega))$ of (\ref{GL}) in the sense of Definition 5.2. Furthermore, $v$ is Lipschitz with respect to the $t$ variable and $(v,\mu)$ satisfies the estimate
\begin{equation} \label{ESTIMATE}
    \sup_{t\in [t_0,T]}||v(t,\cdot)||_{2+\alpha}+||\mu||_{L^p}+\sup_{t\in [t_0,T]}||\mu(t)||_{C^{-(1+\alpha)}_c(\Omega)}\leq CM,
\end{equation}
where $C$ is the constant from (\ref{5.1}) and where
$$M=||v_T||_{2+\alpha}+||\mu_0||_{-(1+\alpha)}+||h||_{0,\alpha}+||c||_{L^1}.$$
\end{thm}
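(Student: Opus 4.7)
The strategy is to bootstrap from the $L^1$ case handled in Proposition~\ref{L} and Proposition~\ref{es3} by a density argument on the MFG initial datum $m_0$, and then to establish uniqueness via a monotonicity/duality computation, using the viability condition~(\ref{PR}) to dispose of all boundary terms.

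\textbf{Existence and the estimate for $m_0 \in \mathcal{P}(\Omega)$.} I would approximate $m_0$ by a sequence $m_{0,n} \in L^1(\Omega) \cap \mathcal{P}(\Omega)$ with $\bm{d_1}(m_{0,n}, m_0) \to 0$, and let $(u_n, m_n)$ be the corresponding solutions of \textbf{(MFG)}. Corollary~\ref{stability} together with Proposition~\ref{5.6.[2]} gives $u_n \to u$ in $C([t_0,T];C^{2+\alpha}(\Omega))$ and $m_n \to m$ in $C([t_0,T]; \mathcal{P}(\Omega)) \cap L^p(Q_T)$. For each $n$, Propositions~\ref{L} and~\ref{es3} supply a solution $(v_n, \mu_n)$ of the linearized system around $(u_n, m_n)$ with the estimate uniform in $n$. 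Applying Arzel\`a--Ascoli to $v_n$, weak-$\star$ compactness of $\mu_n(t)$ in $C^{-(1+\alpha)}_c(\Omega)$ as in Proposition~\ref{mu}, and weak compactness of $\mu_n$ in $L^p$, one extracts a limit $(v,\mu)$. The Lipschitz-in-$t$ property of $v$ transfers from that of $v_n$ (Remark~\ref{v}(ii)), and \eqref{ESTIMATE} follows from weak and weak-$\star$ lower semicontinuity. Passing to the limit in the equation for $v$ is direct from the convergence of $u_n$, $m_n$ and the continuity of $F$, $G$ in $m$; for the Fokker-Planck equation in the sense of Definition~\ref{fFP}, I would argue as in the proof of Theorem~\ref{MFG sol}, showing that the admissible test functions built from the coefficients $(u_n, m_n)$ converge uniformly to those built from $(u,m)$, so that the weak formulation passes to the limit.

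\textbf{Uniqueness.} Given two solutions $(v_1,\mu_1)$, $(v_2,\mu_2)$, I would set $(\tilde v, \tilde \mu) := (v_1 - v_2, \mu_1 - \mu_2)$, which solves the homogeneous problem with $\tilde \mu(t_0) = 0$ and terminal datum $\tilde v(T) = \frac{\delta G}{\delta m}(x, m(T))(\tilde \mu(T))$. The key observation is that $\tilde v$ itself satisfies precisely the adjoint equation required of a test function in Definition~\ref{fFP} for the Fokker-Planck problem obeyed by $\tilde \mu$, with drift $b = H_p(x, Du) + \tilde b(x)$ and source $\psi = \frac{\delta F}{\delta m}(x, m(t))(\tilde \mu(t))$. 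Inserting $\tilde v$ into that weak formulation and integrating by parts the forcing $\mathrm{div}(m H_{pp}(x, Du) D\tilde v)$ against $\tilde v$ yields the energy identity
\begin{equation*}
\left\langle \tilde \mu(T), \tfrac{\delta G}{\delta m}(\cdot, m(T))(\tilde \mu(T))\right\rangle + \int_{t_0}^T\!\!\int_\Omega \tilde \mu\, \tfrac{\delta F}{\delta m}(x, m(t))(\tilde \mu(t))\,dxdt + \int_{t_0}^T\!\!\int_\Omega m\, H_{pp}(x, Du) D\tilde v \cdot D\tilde v\, dxdt = 0.
\end{equation*}
Each term is non-negative --- the first two by the monotonicity of $G$ and $F$ in Hypotheses~(3)--(4), the third by the positive definiteness of $H_{pp}$ in Hypothesis~(2) --- so all three vanish. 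In particular $D\tilde v \equiv 0$ on $\mathrm{supp}(m)$, which forces $m H_{pp}(x, Du) D\tilde v \equiv 0$, so the source of the equation for $\tilde \mu$ disappears. Proposition~\ref{FPf} then gives $\tilde \mu \equiv 0$, and $\tilde v$ solves a linear backward parabolic equation with zero right-hand side and zero terminal data, whence $\tilde v \equiv 0$ by the maximum principle.

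\textbf{Main obstacle.} I expect the hardest step to be the rigorous justification of the duality pairing underlying the energy identity: $\tilde \mu$ only lies in $C^{-(1+\alpha)}_c(\Omega) \cap L^p$, whereas $\tilde v(T)$ is not a priori compactly supported, so $\tilde v$ does not literally sit in the class of admissible test functions of Definition~\ref{fFP}. My plan is to handle this exactly as in the rest of Section 5: first perform the computation at the approximate level on $\Omega_\epsilon$ with the Neumann-boundary solutions $(v^\epsilon, \mu^\epsilon)$ of~(\ref{epsilonGL}), where every boundary contribution is controlled by the Neumann conditions and all integrations by parts are legitimate, and then pass to the limit $\epsilon \to 0$ using the uniform bound~(\ref{5.1}) and the convergences established in Proposition~\ref{mu} and Remark~\ref{v}. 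The viability condition~(\ref{PR}) is what ultimately guarantees that no boundary mass survives in the limit, so the integration-by-parts identity, and hence the uniqueness argument, remains valid.
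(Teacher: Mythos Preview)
Your existence argument is essentially the paper's: approximate $m_0$ by $L^1$ data, invoke Propositions~\ref{L} and~\ref{es3} for $(v_n,\mu_n)$, and pass to the limit using the uniform bounds and the same compactness mechanisms as in Proposition~\ref{mu}. This is correct.

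For uniqueness and the estimate the two routes diverge. The paper proves the estimate~\eqref{ESTIMATE} for \emph{every} solution (not just the constructed one) by testing the weak formulation of $\mu$ against $v$ itself --- taking $\psi=\frac{\delta F}{\delta m}(x,m(t))(\mu(t))+h$ and $\xi=\frac{\delta G}{\delta m}(x,m(T))(\mu(T))+v_T$ in Definition~5.1 --- and combining the resulting inequality with Proposition~\ref{FPf}. Uniqueness is then a one-line corollary: the difference has zero data, hence zero norm. Your route instead establishes the estimate only for the constructed solution (via lower semicontinuity) and then proves uniqueness by a separate energy identity. Your energy computation is precisely the paper's estimate computation specialized to the homogeneous case, so the content is the same; the paper's ordering is slightly more economical since one computation delivers both the quantitative bound and uniqueness.

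The ``main obstacle'' you flag is not actually an obstacle. Hypothesis~(5) furnishes compactly supported approximations $\frac{\delta G^\epsilon}{\delta m}$ converging in $C^{2+\alpha}\times C^{2+\alpha}$, which is exactly the statement that $\frac{\delta G}{\delta m}(\cdot,m,\cdot)\in C^{2+\alpha}_c\times C^{2+\alpha}_c$; pairing against $\tilde\mu(T)\in C^{-(1+\alpha)}_c$ therefore yields $\tilde v(T)\in C^{1+\alpha}_c$, and $\tilde v$ is a legitimate test function in Definition~5.1 with $\psi=\frac{\delta F}{\delta m}(x,m(t))(\tilde\mu(t))\in C^{0,\alpha}$. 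The paper uses this directly (note its claim ``$\xi\in C^{1+\alpha}_c$'' in the estimate sketch). Your fallback plan --- redo the duality on $\Omega_\epsilon$ with the Neumann solutions of~(\ref{epsilonGL}) --- would in fact be problematic for uniqueness, since two \emph{given} solutions of~(\ref{GL}) on $\Omega$ have no canonical $\epsilon$-approximants; fortunately it is unnecessary.
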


\begin{proof} Without loss of generality we assume that $t_0=0$.
\vspace{2mm}

\noindent
\textit{Existence:} By density, there exists a sequence $(m_{0,n})_{n\in\mathbb{N}}\in L^1(\Omega)$ such that\\ $\bm{d_1}(m_{0,n},m_0)\rightarrow 0$. For every $n\in\mathbb{N}$, we consider $(u_n,m_n)$ the solution of \textbf{(MFG)} associated with $(0,m_{0,n})$ given by Theorem 5.1 of [1] and $(v_n,\mu_n)$ the solution of (\ref{GL}), that corresponds to $(u_n,m_n)$, given by Proposition \ref{L}. By Remark \ref{v} \textbf{(ii)}, we know that $(v_n)_{n\in\mathbb{N}}$ is uniformly Lipschitz with respect to the $t$ variable. Also, note that Proposition \ref{es3} implies
$$\sup_{t\in [0,T]}||v_n(t,\cdot)||_{2+\alpha}+||\mu_n||_{L^p}+\sup_{t\in [0,T]}||\mu_n(t)||_{C_c^{-(1+\alpha)}(\Omega)}\leq CM.$$
As a result, arguing as in Proposition 5.3, there exist $v\in C([0,T];C^2(\Omega))$ and $\mu\in C([0,T];C_c^{-(1+\alpha)}(\Omega))\cap L^p(Q_T)$ such that 
\begin{align*}
& v_n(t,\cdot)\rightarrow v(t,\cdot)\text{ uniformly in }C^2(\Omega),\\
& \mu_n\rightharpoonup \mu\text{ weakly in }L^p\text{ and }\\
& \mu_n(t)\stackrel{\star}{\rightarrow}\mu(t) \text{ weak-}\star\text { in }C^{-(1+\alpha)}_c(\Omega), \text{ for any }t\in [0,T],
\end{align*}
up to a subsequence. It, also, follows that $v$ is also Lipschitz with respect to the $t$ variable. We will show that $(v,\mu)$ satisfies the two equations and the boundary conditions of (\ref{GL}) separately. 
\vspace{1mm}

\noindent
The boundary conditions follow easily from the properties of $G$ (Hypothesis 4), because, as we proved in Theorem \ref{MFG sol}, $m_n\rightarrow m$ in $C([0,T];\mathcal{P}(\Omega))$.
\vspace{2mm}

\noindent
Let $\phi\in C^{\infty}_c((0,T]\times \Omega)$. Then, we have
\begin{equation}\label{5.11}
    \begin{split}
\int_0^T\int_{\Omega}v_n(\partial_t \phi)\;dxdt-
\int_0^T\int_{\Omega}\text{tr}(a(x)D^2v_n)\phi\; dxdt+\int_0^T\int_{\Omega}H_p(x,Du_n)\cdot Dv_n\phi \;dxdt=\\ \int_{0}^T\int_{\Omega}h(t,x)\phi(t,x)\; dxdt+ \int_{\Omega}\int_0^T\int_{\Omega}\frac{\delta F}{\delta m}(x,m_n(t),y)\phi(x,t)\mu_n(t,y)\;dydtdx\;.
\end{split}
\end{equation}
Passing to the limit in (\ref{5.11}), since $m_n\rightarrow m$ in $L^p$ (Proposition \ref{5.6.[2]}) and $u_n(t,\cdot)\rightarrow u(t,\cdot)$ in $C^2$ (Theorem \ref{MFG sol}), we get
\begin{equation*}
    \begin{split}
\int_0^T\int_{\Omega}v(\partial_t \phi)\;dxdt-
\int_0^T\int_{\Omega}\text{tr}(a(x)D^2v)\phi\; dxdt+\int_0^T\int_{\Omega}H_p(x,Du)\cdot Dv\phi \;dxdt=\\ \int_{0}^T\int_{\Omega}h(t,x)\phi(t,x)\; dxdt+ \int_{\Omega}\int_0^T\int_{\Omega}\frac{\delta F}{\delta m}(x,m(t),y)\phi(x,t)\mu(t,y)\;dydtdx\;,
\end{split}
\end{equation*}
hence we conclude that $(v,\mu)$ satisfies the first equation of (\ref{GL}). 
\vspace{2mm}

\noindent
To prove that the second equation is satisfied, let $\phi$ be a solution to (\ref{eqtest}) and $\phi_n$ be a solution to
$$\begin{cases}
-\partial_t\phi_n-\text{tr}(a(x)D^2\phi_n)+H_p(x,Du_n)\cdot D\phi_n=
-\partial_t\phi-\text{tr}(a(x)D^2\phi)+H_p(x,Du)\cdot D\phi,\\
\hfill \hfill \hfill\hfill\hfill\hfill \text{ in }(0,t)\times\Omega,\\
\phi_n(t)=\xi,\text{ in }\Omega .\\
\end{cases}$$
Notice that, (\ref{result}) and Arzela-Ascoli give $\phi_n(0,\cdot)\rightarrow \phi(0,\cdot)$ in $C^{1+\alpha}$ and $\phi_n\rightarrow \phi$ in $C^{0,1}$, up to a subsequence. By Proposition \ref{L}, $(v_n,\mu_n)$ satisfies 
\begin{multline}
\int_0^t\int_{\Omega}\mu_n(-\partial_t\phi-\text{tr}(a(x)D^2\phi)+H_p(x,Du)\cdot D\phi)dxds-\langle \mu_0,\phi_n(0)\rangle=\\
-\int_0^t\int_{\Omega}c\cdot D\phi_n\;dxds-\int_0^t\int_{\Omega}m_nH_{pp}(x,Du_n)Dv_n\cdot D\phi_n dxds-\langle \mu_n(t),\xi\rangle.
\end{multline}
After passing to the limit and having all the convergences in mind, it follows that
\begin{multline*}
\int_0^t\int_{\Omega}\mu(-\partial_t\phi-\text{tr}(a(x)D^2\phi)+H_p(x,Du)\cdot D\phi)dxds-\langle \mu_0,\phi(0)\rangle=\\
-\int_0^t\int_{\Omega}c\cdot D\phi\;dxds-\int_0^t\int_{\Omega}mH_{pp}(x,Du)Dv\cdot D\phi\; dxds-\langle \mu(t),\xi\rangle.
\end{multline*}
Thus, $(v,\mu)$ satisfies the second equation of (\ref{GL}).
\vspace{2mm}

\noindent
\textit{Uniqueness:} Uniqueness follows by observing that if $(v_1,\mu_1)$ and $(v_2,\mu_2)$ are two solutions of (\ref{GL}), then $(v_1-v_2,\mu_1-\mu_2)$ satisfies (\ref{GL}) for $h=c=\mu_0=v_T=0$. Hence, (\ref{ESTIMATE}) implies that $\mu_1=\mu_2$ and $v_1=v_2$. 
\vspace{2mm}

\noindent
\textit{Estimate:} The proof of the estimate is an adaptation of the proof of the estimate in Proposition 5.8 of [2], so we provide a sketch. 

\noindent
We know that $v$ can be uniquely obtained as the limit of solutions to the problems
$$\begin{cases}
-\partial_t v^{\epsilon}-\text{tr}(a(x)D^2v^{\epsilon})+H_p(x,Du)\cdot Dv^{\epsilon}=\frac{\delta F}{\delta m}(x,m(t))(\mu (t))+h(t,x),\text{ in }(0,T)\times\Omega_{\epsilon},\\
v^{\epsilon}(T,x)=\frac{\delta G^{\epsilon}}{\delta m}(x,m(T))(\mu(T))+v_T(x)-g_{\epsilon}(x),\text{ in }\Omega_{\epsilon},\\
a(x)Dv^{\epsilon}\cdot \nu(x)|_{\partial\Omega_{\epsilon}}=0,
\end{cases}$$
as $\epsilon\rightarrow 0$ (Proposition 3.4 of [1]). Using Theorem 5.1.21 from [6], we find

\begin{align*}
||v^{\epsilon}||_{1,2+\alpha}&\leq C\left(||v_T-g_{\epsilon}||_{2+\alpha}+\sup_{t\in [0,T]}||\mu(t)||_{C^{-(2+\alpha)}_c}+||h||_{0,\alpha}\right)\\
&\leq C\left( M+\sup_{t\in [0,T]}||\mu(t)||_{C^{-(1+\alpha)}_c}\right),
\end{align*}
so $v$ satisfies
$$\sup_{t\in[0,T]}||v(t,\cdot)||_{2+\alpha}\leq C\left(M+\sup_{t\in [0,T]}||\mu(t)||_{C^{-(1+\alpha)}_c}\right).$$
In addition, by writing Definition 5.1 for
$$\psi=\frac{\delta F}{\delta m}(x,m(t))(\mu(t))+h(t,x)\in C^{0,\alpha},\;\;\xi=\frac{\delta G}{\delta m}(x,m(T))(\mu(T))+v_T(x)\in C^{1+\alpha}_c$$
and using the monotonicity of $F$, $G$, we get
$$\int_0^T\int_{\Omega}m\langle H_{pp}(x,Du)Dv,Dv\rangle\;dxdt\leq M\left(\sup_{t\in [0,T]}||\mu(t)||_{C^{-(1+\alpha)}_c(\Omega)}+||\mu||_{L^p}+\sup_{t\in[0,T]}||v(t,\cdot)||_{2+\alpha}\right).$$
Furthermore, as in Proposition 5.8 of [2],
$$||mH_{pp}(x,Du)Dv||_{L^1}\leq M^{1/2}\left( \sup_{t\in [0,T]}||\mu(t)||_{C^{-(1+\alpha)}_c(\Omega)}^{1/2}+||\mu||_{L^p}^{1/2}+\sup_{t\in [0,T]}||v(t,\cdot)||_{2+\alpha}^{1/2} \right).$$
Combining these with the estimate from Proposition \ref{FPf} and the Arithmetic-Geometric mean inequality, (\ref{ESTIMATE}) follows.
\end{proof}

\subsection{Derivative with respect to the measure}

\vspace{5mm}

\noindent
For $y\in \Omega$, we define
\begin{equation}\label{Kdef}
K(t_0,x,m_0,y):=v(t_0,x; \delta_y),
\end{equation}
where $v(t_0,x;\delta_y)$ comes from the solution of \textbf{(L)} with $\mu_0=\delta_y$.
\vspace{4mm}

\noindent
Before we proceed with discovering the properties of $K$, we start with a useful lemma.
\vspace{2mm}

\begin{lem} \label{Stability} (Stability of solutions)

\noindent
Assume Hypotheses 2.3. Let $(\mu_{0,n})_{n\in\mathbb{N}}$ be a sequence in $C^{-(1+\alpha)}(\Omega)$ and $\mu_0\in C^{-(1+\alpha)}(\Omega)$ such that $\mu_{0,n}\rightarrow \mu_0$. Consider $v_n,v$ to be the functions we get from the solution of \textbf{(L)} starting at $\mu_{0,n}$ and $\mu_0$, respectively. Then, 
$$v_n(t_0,\cdot)\rightarrow v(t_0,\cdot)\text{ in }C^{2+\alpha},$$
for any $t_0\in [0,T]$.
\end{lem}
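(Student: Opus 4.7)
The plan is to exploit the linearity of the system $\bm{(L)}$ together with the global estimate $(\ref{ESTIMATE})$ proved in Theorem \ref{generalGL}. Since all the coefficients appearing in $\bm{(L)}$ (namely $a$, $H_p(x,Du)$, $H_{pp}(x,Du)$, $m$, and the kernels $\frac{\delta F}{\delta m}(x,m(t),\cdot)$, $\frac{\delta G}{\delta m}(x,m(T),\cdot)$) depend only on the reference pair $(u,m)$ and not on the initial datum of the linearized system, the difference of two solutions satisfies the homogeneous version of the system.

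More precisely, let $(v_n,\mu_n)$ and $(v,\mu)$ be the solutions of $\bm{(L)}$ corresponding to $\mu_{0,n}$ and $\mu_0$, respectively, both built over the same $(u,m)$. Setting $w_n:=v_n-v$ and $\eta_n:=\mu_n-\mu$, by subtracting the two systems we see that $(w_n,\eta_n)$ is a solution of the general linearized problem $(5.1)$ with
\[
h\equiv 0,\qquad c\equiv 0,\qquad v_T\equiv 0,\qquad \eta_n(t_0)=\mu_{0,n}-\mu_0.
\]
This is the first step: verify that the subtraction is legitimate in the sense of Definition 5.2. For the first equation, this is immediate since it is interpreted distributionally. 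For the second (Fokker--Planck) equation, this follows from the definition of weak solution (Definition 5.1), which is linear in $(\mu,f)$.

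Next I apply estimate $(\ref{ESTIMATE})$ from Theorem \ref{generalGL} to $(w_n,\eta_n)$. With the choice above, the quantity $M$ on the right-hand side reduces to $\|\mu_{0,n}-\mu_0\|_{-(1+\alpha)}$, so
\[
\sup_{t\in[t_0,T]}\|v_n(t,\cdot)-v(t,\cdot)\|_{2+\alpha}
\;\leq\; C\,\|\mu_{0,n}-\mu_0\|_{-(1+\alpha)}.
\]
Since $\mu_{0,n}\to\mu_0$ in $C^{-(1+\alpha)}(\Omega)$ by hypothesis, the right-hand side tends to $0$, and evaluating at $t=t_0$ gives the desired convergence $v_n(t_0,\cdot)\to v(t_0,\cdot)$ in $C^{2+\alpha}(\Omega)$.

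There is no real obstacle: the only thing to double-check is that the uniqueness from Theorem \ref{generalGL} guarantees that $(w_n,\eta_n)$ is indeed \emph{the} solution of the homogeneous problem with initial measure $\mu_{0,n}-\mu_0$, so that $(\ref{ESTIMATE})$ applies. Once this is observed, stability is just a direct consequence of the continuous dependence encoded in the a priori estimate of Theorem \ref{generalGL}, and, in fact, the argument yields uniform convergence in $t\in[t_0,T]$, a strengthening of the statement that will be convenient in the subsequent application to the regularity of $U$ with respect to the measure variable.
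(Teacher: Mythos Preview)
Your proof is correct and follows exactly the same approach as the paper: use the linearity of $\bm{(L)}$ so that $(v_n-v,\mu_n-\mu)$ solves (5.1) with $h=c=v_T=0$ and initial datum $\mu_{0,n}-\mu_0$, then apply estimate (\ref{ESTIMATE}) to obtain $\sup_t\|v_n(t,\cdot)-v(t,\cdot)\|_{2+\alpha}\leq C\|\mu_{0,n}-\mu_0\|_{-(1+\alpha)}$. The paper records this in one line as inequality (\ref{stab}); your additional remarks about checking that the subtraction is legitimate in the sense of Definition~5.2 and that uniqueness ensures the estimate applies are sound but not strictly needed once Theorem~\ref{generalGL} is in hand.
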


\begin{proof}
By (\ref{ESTIMATE}) and the linearity, we deduce
\begin{equation} \label{stab}
\sup_{t\in [0,T]} ||v_n(t,\cdot)-v(t,\cdot)||_{2+\alpha}\leq C||\mu_{0,n}-\mu_0||_{-(1+\alpha)}
\end{equation}
The result follows.
\end{proof}
\vspace{4mm}

\noindent
Now we are ready to prove the properties of $K$.
\vspace{3mm}

\begin{prop} \label{representation}
Assume Hypotheses 2.3. Then, the function $K: [0,T]\times \Omega\times \mathcal{P}(\Omega)\times \Omega$ satisfies \begin{equation}\label{repr}
v(t_0,x)=\langle \mu_0, K(t_0,x,m_0,\cdot)\rangle_{1+\alpha}
\end{equation}
for all $(t_0,x,\mu_0)\in [0,T]\times\Omega\times C^{-(1+\alpha)}$. Moreover, $K(t_0,\cdot,m_0,\cdot)\in C^{2+\alpha,1+\alpha}$ with
$$\sup_{(t,m)\in[t_0,T]\times\mathcal{P}(\Omega)}||K(t,\cdot,m,\cdot)||_{2+\alpha,1+\alpha}\leq C.$$
\end{prop}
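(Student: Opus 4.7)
The plan is to extract all the regularity of $K$ from the linearity of \textbf{(L)} in $\mu_0$ together with the uniform stability estimate (\ref{ESTIMATE}) of Theorem \ref{generalGL}. Define $K(t_0,x,m_0,y) := v(t_0,x;\delta_y)$, where $v(\cdot,\cdot;\mu_0)$ denotes the first component of the solution to \textbf{(L)} with $v_T = h = c = 0$, constructed in Theorem \ref{generalGL}. Since $\|\delta_y\|_{-(1+\alpha)} \leq 1$ uniformly in $y\in\Omega$, the estimate (\ref{ESTIMATE}) immediately yields the uniform $C^{2+\alpha}$ bound in the $x$-variable; applied to the difference $\delta_{y_1}-\delta_{y_2}$ (whose $C^{-(1+\alpha)}$ norm is at most $C|y_1-y_2|$, since test functions in $C^{1+\alpha}$ are uniformly Lipschitz), it also gives the coarse control $\|K(t,\cdot,m_0,y_1)-K(t,\cdot,m_0,y_2)\|_{2+\alpha} \leq C|y_1-y_2|$.

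The nontrivial part is upgrading this to full $C^{1+\alpha}$-regularity in $y$. The idea is to iterate the same argument after a first-order Taylor expansion of the Dirac mass. Introduce the vector-valued functional $\Lambda_{y_0}\in (C^{-(1+\alpha)}_c(\Omega))^d$ defined by $\langle \Lambda_{y_0},\phi\rangle := D\phi(y_0)$; clearly $\|\Lambda_{y_0}\|_{-(1+\alpha)}\leq 1$. For $\phi\in C^{1+\alpha}$ with $\|\phi\|_{1+\alpha}\leq 1$, Taylor's theorem gives
\begin{equation*}
|\phi(y_1)-\phi(y_2)-D\phi(y_2)\cdot(y_1-y_2)| \leq C|y_1-y_2|^{1+\alpha},
\end{equation*}
that is, $\|\delta_{y_1}-\delta_{y_2}-\Lambda_{y_2}\cdot(y_1-y_2)\|_{-(1+\alpha)} \leq C|y_1-y_2|^{1+\alpha}$. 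Applying (\ref{ESTIMATE}) and linearity then shows that $K(t_0,x,m_0,\cdot)$ is differentiable in $y$ with $D_yK(t_0,x,m_0,y) = v(t_0,x;\Lambda_y)$, uniformly bounded by $C$. Using additionally the $\alpha$-Hölder continuity of $D\phi$ for $\phi\in C^{1+\alpha}$, one has $\|\Lambda_{y_1}-\Lambda_{y_2}\|_{-(1+\alpha)}\leq |y_1-y_2|^\alpha$, and a final application of (\ref{ESTIMATE}) yields the Hölder estimate on $D_yK$, producing the required uniform $C^{1+\alpha}$-bound in $y$.

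For the representation (\ref{repr}), when $\mu_0=\sum_i c_i\delta_{y_i}$ is a finite combination of Diracs the identity is immediate by linearity of \textbf{(L)}; for general $\mu_0\in C^{-(1+\alpha)}$, I would approximate in the weak-$\star$ topology by such combinations and pass to the limit, invoking weak-$\star$ continuity of both sides --- the right-hand side is weak-$\star$ continuous by construction since $K(t_0,x,m_0,\cdot)\in C^{1+\alpha}$, and the left-hand side by the compactness/uniqueness arguments already used in Theorem \ref{generalGL}. The main technical point I expect to have to verify carefully is that the constant $C$ in (\ref{ESTIMATE}) is genuinely uniform over $(t_0,m_0)\in[0,T]\times\mathcal{P}(\Omega)$; this reduces to the uniform bounds on the underlying MFG pair $(u,m)$ furnished by Proposition \ref{MFGL1} and Theorem \ref{MFG sol}.
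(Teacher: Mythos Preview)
Your proposal is correct and follows essentially the same approach as the paper. The paper computes the difference quotient $\frac{1}{h}(\delta_{y+he_j}-\delta_y)$ and passes to the limit via the stability estimate (\ref{stab}) to identify $\partial_{y_j}K(t_0,x,m_0,y)=v(t_0,x;-\partial_{y_j}\delta_y)$; your functional $\Lambda_y$ is precisely $-D_y\delta_y$, and the Taylor-remainder bound $\|\delta_{y_1}-\delta_{y_2}-\Lambda_{y_2}\cdot(y_1-y_2)\|_{-(1+\alpha)}\leq C|y_1-y_2|^{1+\alpha}$ is just a slightly slicker repackaging of the same limit, avoiding the explicit difference-quotient step. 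Both arguments then obtain the $\alpha$-H\"older control on $D_yK$ from $\|\Lambda_{y_1}-\Lambda_{y_2}\|_{-(1+\alpha)}\leq |y_1-y_2|^{\alpha}$ (equivalently, boundedness and H\"older continuity of $y\mapsto -\partial_{y_j}\delta_y$ in $C^{-(1+\alpha)}$), and both deduce (\ref{repr}) by linearity on finite combinations of Diracs followed by weak-$\star$ density.
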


\begin{proof}
The fact that $K$ is $C^{2+\alpha}$ with respect to $x$ follows from (\ref{Kdef}). So, we only need to prove the $C^{1+\alpha}$ character of $K$ with respect to $y$. By linearity and the uniqueness of the solution of \textbf{(L)}, we have
$$\frac{K(t_0,x,m_0,y+he_j)-K(t_0,x,m_o,y)}{h}=v(t_0,x;\Delta_{h,j}\delta_y)\;,$$
where $\Delta_{h,j}\delta_y=\frac{1}{h}(\delta_{y+he_j}-\delta_y)$. Sending $h\rightarrow 0$ and using (\ref{stab}), we deduce that
$$\frac{\partial K}{\partial y_j}(t_0,x,m_0,y)=v(t_0,x;-\partial_{y_j}\delta_y)\; ,$$
which is $C^{2+\alpha}$ with respect to $x$ and, due to (\ref{stab}), and $\alpha-$Hölder continuous with respect to $y$. The estimate follows from Proposition \ref{es3}, (\ref{stab}) and the fact that $\partial_{y_j}\delta_y$ is bounded in $C^{-(1+\alpha)}$ for all $j$.
\vspace{1mm}

\noindent
Finally, note that, (\ref{Kdef}) implies that the representation formula (\ref{repr}) holds when $\mu_0=\delta_y$ for some $y\in\Omega$. From the density of the subspace generated by the Dirac delta functions, in the weak-$\star$ topology, it follows that we can extend (\ref{Kdef}) to (\ref{repr}). The proof is complete. 
\end{proof}

\vspace{4mm}

\begin{prop} \label{diff}
Suppose that Hypotheses 2.3 are satisfied. Then, $U$ is differentiable with respect to the measure variable and, more specifically, $$K(t_0,x,m_0,y)=\frac{\delta U}{\delta m}(t_0,x,m_0,y)$$
for all $(t_0,x,m_0,y)\in [0,T]\times \Omega\times \mathcal{P}(\Omega)\times \Omega$.
\end{prop}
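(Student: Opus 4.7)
The plan is to prove Proposition \ref{diff} in two parts: (i) verify the directional-derivative identity that characterizes $\frac{\delta U}{\delta m}$, i.e., for every $m_0, m_1 \in \mathcal{P}(\Omega)$,
\[
\lim_{s \to 0^+}\frac{U(t_0,x,(1-s)m_0+sm_1)-U(t_0,x,m_0)}{s}=\int_\Omega K(t_0,x,m_0,y)(m_1-m_0)(dy);
\]
and (ii) check that $K$ depends continuously on $m_0$, as required by the definition of class $C^1$ in Section 2.2.

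For part (i), I set $m_0^s := (1-s)m_0+sm_1$, denote by $(u^s,m^s)$ the solution of \textbf{(MFG)} at $(t_0,m_0^s)$ (so that $U(t_0,x,m_0^s)=u^s(t_0,x)$), and introduce the quotients
\[
w^s := \frac{u^s-u^0}{s}, \qquad \rho^s := \frac{m^s-m^0}{s}.
\]
Subtracting the two \textbf{(MFG)} systems and writing every nonlinear difference via the fundamental theorem of calculus, $(w^s,\rho^s)$ satisfies a system of the form (\ref{GL}) around $(u^0,m^0)$ with averaged coefficients
\[
V^s(t,x):=\int_0^1 H_p(x,\tau Du^s+(1-\tau)Du^0)\,d\tau,
\]
and analogous averages involving $H_{pp}$, $\frac{\delta F}{\delta m}$, $\frac{\delta G}{\delta m}$; the initial datum for $\rho^s$ is exactly $m_1-m_0\in C^{-(1+\alpha)}(\Omega)$, while the ``error'' forcings $h^s,c^s,v_T^s$ arising from replacing the averaged coefficients by the limiting ones vanish as $s\to 0^+$. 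Indeed, Corollary \ref{stability} yields $u^s\to u^0$ in $C([t_0,T];C^{2+\alpha})$ and $m^s\to m^0$ in $C([t_0,T];\mathcal{P}(\Omega))$, which, combined with the regularity of $H,F,G$ in Hypotheses 2.3, forces $h^s\to 0$ in $C^{0,\alpha}$, $c^s\to 0$ in $L^1$, $v_T^s\to 0$ in $C^{2+\alpha}$. Applying estimate (\ref{ESTIMATE}) gives a uniform-in-$s$ bound on $(w^s,\rho^s)$; the compactness argument from the proof of Theorem \ref{generalGL} extracts a limit $(v,\mu)$ solving \textbf{(L)} with $\mu_0=m_1-m_0$, and uniqueness in Theorem \ref{generalGL} forces the full family to converge. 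Evaluating at $(t_0,x)$ and using the representation formula (\ref{repr}) from Proposition \ref{representation} identifies the limit with $\int_\Omega K(t_0,x,m_0,y)(m_1-m_0)(dy)$.

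For part (ii), I argue by a second stability argument: if $m_{0,n}\to m_0$ in $\mathcal{P}(\Omega)$, then by Corollary \ref{stability} the associated MFG solutions converge, and the linearized system \textbf{(L)} with fixed datum $\mu_0=\delta_y$ built around $(u_n,m_n)$ differs from the one built around $(u,m)$ only through its coefficients. Repeating the compactness-and-uniqueness scheme of Theorem \ref{generalGL} yields $v_n(t_0,\cdot)\to v(t_0,\cdot)$ in $C^{2+\alpha}$; since $y\mapsto\delta_y$ is bounded in $C^{-(1+\alpha)}$ uniformly and the map $y\mapsto K(t_0,\cdot,m_0,y)$ is already $C^{1+\alpha}$ by Proposition \ref{representation}, the joint continuity of $K$ in $(m_0,y)$ follows.

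I expect the main obstacle to be the $L^1$-control of the drift-type error $c^s$ in the Fokker--Planck equation satisfied by $\rho^s$: schematically, $c^s$ contains $[m^s H_{pp}(x,Du^s)-m^0 H_{pp}(x,Du^0)]Dw^s$, and only the uniform $L^p$ bound on $\rho^s$ from (\ref{ESTIMATE}), combined with the $C^{1+\alpha/2,2+\alpha}$ convergence of $u^s\to u^0$ and the $L^p$ convergence of $m^s\to m^0$ from Proposition \ref{5.6.[2]}, will force $c^s\to 0$ in $L^1$. Once this delicate estimate is in place, the passage to the limit for $(w^s,\rho^s)$ is a routine repetition of the scheme already used in the proof of Theorem \ref{generalGL}.
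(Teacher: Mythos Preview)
Your approach is correct in outline but takes a different route from the paper, and it contains one subtlety you gloss over.

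\textbf{Comparison.} The paper does not pass to the limit in the difference quotients $(w^s,\rho^s)$. Instead it fixes two measures $m_{01},m_{02}$, lets $(v,\mu)$ be the solution of \textbf{(L)} around $(u_2,m_2)$ with $\mu_0=m_{01}-m_{02}$, and observes that the \emph{remainder} $(u_1-u_2-v,\; m_1-m_2-\mu)$ itself solves a system of the form (\ref{GL}) with $\mu_0=0$ and forcings that are products of two first-order differences (hence, via Lemma \ref{stability estimates}, of size $O(\bm{d_1}(m_{01},m_{02})^2)$). A single application of (\ref{ESTIMATE}) then yields the quantitative second-order bound
\[
\sup_{t}\|u_1(t,\cdot)-u_2(t,\cdot)-v(t,\cdot)\|_{2+\alpha}\le C\,\bm{d_1}(m_{01},m_{02})^2,
\]
from which the directional-derivative identity follows by setting $m_{01}=(1-s)m_0+sm_1$, dividing by $s$, and letting $s\to 0$. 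This is cleaner: no compactness, no absorption, and it produces the quadratic remainder estimate (\ref{2}), which is strictly stronger than bare differentiability and is the natural precursor to Lemma \ref{general estimate}.

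\textbf{The subtlety in your argument.} When you write that ``applying (\ref{ESTIMATE}) gives a uniform-in-$s$ bound on $(w^s,\rho^s)$'', note that the forcings $h^s,c^s$ you introduce by recentering the averaged coefficients onto $(u^0,m^0)$ contain $Dw^s$ and $\rho^s$ themselves (e.g.\ $h^s$ carries $(H_p(x,Du^0)-V^s)\cdot Dw^s$, and $c^s$ carries $(m^s\tilde H^s_{pp}-m^0 H_{pp}(x,Du^0))Dw^s$). So (\ref{ESTIMATE}) only gives
\[
\|(w^s,\rho^s)\|\le C\big(\|m_1-m_0\|_{-(1+\alpha)}+o_s(1)\,\|(w^s,\rho^s)\|\big),
\]
and you must absorb for small $s$. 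Alternatively you can argue that (\ref{ESTIMATE}) holds, with a uniform constant, for the system with averaged coefficients that $(w^s,\rho^s)$ solves \emph{without} error terms, since the averaged kernels $\int_0^1\frac{\delta F}{\delta m}(\cdot,\tau m^s+(1-\tau)m^0,\cdot)\,d\tau$ inherit the monotonicity used in the proof of Theorem \ref{generalGL}. Either fix works, but one of them needs to be stated; as written, the appeal to (\ref{ESTIMATE}) is circular. The paper's remainder approach sidesteps this entirely because its forcings are genuinely independent of the unknown.
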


\begin{proof}
\noindent
Consider $m_{01},m_{02}\in\mathcal{P}(\Omega)$ and let $(u_1,m_1)$ and $(u_2,m_2)$ be the solutions of \textbf{(MFG)} associated with the starting initial conditions $(0,m_{01})$ and $(0,m_{02})$, respectively.  Let also $(v,\mu)$ be the solution of \textbf{(L)} related to $(u_2,m_2)$ with initial condition $(0,m_{01}-m_{02})$.
Then, $(u_1-u_2-v,m_1-m_2-\mu)$ solves a system of the same form as (\ref{GL}) and hence (\ref{ESTIMATE}) and the same calculations as in Theorem 5.10 in [2] yield
$$\sup_{t\in [0,T]}||u_1(t,\cdot)-u_2(t,\cdot)-v(t,\cdot)||_{2+α}\leq C\bm{d}_1(m_{01},m_{02})^2\text{ in }\Omega.$$

\noindent
In particular, we can write
$$
||u_1(t_0,\cdot)-u_2(t_0,\cdot)-v(t_0,\cdot)||_{2+\alpha}\leq C\bm{d}_1(m_{01},m_{02})^2\; $$
\vspace{1mm}

\noindent
Now, by Proposition \ref{representation} we deduce
\begin{equation} \label{2}
\left|\left|U(t_0,\cdot,m_{01})-U(t_0,\cdot,m_{02})-\int_{\Omega}K(t_0,\cdot,m_{02},y)(m_{01}-m_{02})dy\right|\right|_{2+\alpha}\leq C\bm{d}_1(m_{01},m_{02})^2\; .
\end{equation}

\noindent
Let $m,\widetilde{m}\in\mathcal{P}(\Omega)$. For $s\in [0,1]$, choose $m_{02}=m$ and $m_{01}=(1-s)m+s\widetilde{m}$. Then, (5.19) gives
\begin{multline*}
\left|\left|U(t_0,\cdot,(1-s)m+s\widetilde{m})-U(t_0,\cdot,m)-s\int_{\Omega}K(t_0,\cdot,m,y)(\widetilde{m}-m)dy\right|\right|_{2+\alpha}\\
\leq C\bm{d}_1((1-s)m+s\widetilde{m},m)^2
\end{multline*}
or
\begin{multline*}
\left|\left|\frac{U(t_0,\cdot,(1-s)m+s\widetilde{m})-U(t_0,\cdot,m)}{s}-\int_{\Omega}K(t_0,\cdot,m,y)(\widetilde{m}-m)dy\right|\right|_{2+\alpha}\\
\leq sC\bm{d}_1(\widetilde{m},m)^2.
\end{multline*}

\noindent
By letting $s\rightarrow 0$, we deduce
$$\left|\left|\int_{\Omega}\frac{δU}{δm}(t_0,\cdot,m,y)(\widetilde{m}-m)dy-\int_{\Omega}K(t_0,\cdot,m,y)(\widetilde{m}-m)dy\right|\right|_{2+\alpha}=0.$$
Thus, 
$$\frac{δU}{δm}(t_0,\cdot,m,\cdot)=K(t_0,\cdot,m,\cdot) \text{ in  } \Omega\times \Omega$$
The result follows. 
\end{proof}

\vspace{4mm}

\noindent
Combining Propositions \ref{representation} and \ref{diff}, we deduce that $\frac{\delta U}{\delta m}(t_0,x,m_0,y)$ is differentiable with respect to $y$ and hence $D_mU(t_0,x,m_0,y)$ is defined. However, it is also true that $K=\frac{\delta U}{\delta m}$ has a second derivative with respect to $y$. We will prove this result in Proposition \ref{diff2}, but before we state it, we are going to need the following lemma.
\vspace{7mm}

\begin{lem} \label{est}
Suppose that Hypotheses 2.3 are satisfied and $\mu_0\in C^{-(1+\alpha)}(\Omega)$. Let $(v,\mu)$ be the solution to \textbf{(L)}. Then, there exists a constant $C$, independent of $\mu_0$, such that
$$\sup_{t\in [0,T]}||v(t,\cdot )||_{2+\alpha}\leq C||\mu_0||_{-(2+\alpha)}$$
\end{lem}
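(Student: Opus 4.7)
The plan is to sharpen the Schauder estimates used to prove Theorem \ref{generalGL}, exploiting the fact that the kernels $\frac{\delta F}{\delta m}(x,m,\cdot)$ and $\frac{\delta G}{\delta m}(x,m,\cdot)$ are of class $C^{2+\alpha}$ in the $y$-variable (Hypotheses (3), (4)). This extra regularity lets us gain one derivative in the measure variable: the $||\mu_0||_{-(1+\alpha)}$ appearing in (\ref{ESTIMATE}) can be replaced by the weaker norm $||\mu_0||_{-(2+\alpha)}$.

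First, I would refine the Schauder estimate for the HJB equation (the first equation of $\bm{(L)}$). Since $\frac{\delta F}{\delta m}(x,m(t),\cdot)\in C^{2+\alpha}$ uniformly in $x$, the representation $F'\mu(t,x)=\langle\mu(t),\frac{\delta F}{\delta m}(x,m(t),\cdot)\rangle$ gives $||F'\mu||_{0,\alpha}\le C_F\sup_s||\mu(s)||_{-(2+\alpha)}$, and similarly $||G'\mu(T)||_{2+\alpha}\le C_G||\mu(T)||_{-(2+\alpha)}$. Parabolic Schauder (applied via the $\Omega_\epsilon$-approximation with Neumann compatibility corrections as in Proposition \ref{MFGL1}, with uniform constants by Remark 3.4) then yields
$$\sup_{s\in[t_0,T]}||v(s,\cdot)||_{2+\alpha}\le C\sup_{s\in[t_0,T]}||\mu(s)||_{-(2+\alpha)}.$$
Second, for the Fokker-Planck equation, testing against $\xi\in C^{2+\alpha}_c(\Omega)$ (with $\psi=0$ in Definition 5.1) and invoking the Schauder bound $||\phi||_{1,2+\alpha}\le C||\xi||_{2+\alpha}$ for the dual equation, I obtain
$$||\mu(t)||_{-(2+\alpha)}\le C||\mu_0||_{-(2+\alpha)}+C\int_{t_0}^t||v(s,\cdot)||_{2+\alpha}\,ds,$$
using that $m\in L^\infty([t_0,T];L^1(\Omega))$ has total mass $1$ and $||Dv||_\infty\le||v(s,\cdot)||_{2+\alpha}$.

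Setting $V_*:=\sup_s||v(s,\cdot)||_{2+\alpha}$ and $M_*:=\sup_s||\mu(s)||_{-(2+\alpha)}$, these combine into $V_*\le CM_*$ and $M_*\le C||\mu_0||_{-(2+\alpha)}+C(T-t_0)V_*$. For $T-t_0$ small enough that $C^2(T-t_0)<1$, these absorb into $V_*\le C'||\mu_0||_{-(2+\alpha)}$, which is the desired bound. For arbitrary $T-t_0$, I would iterate the short-time estimate on a partition of $[t_0,T]$ into short sub-intervals, applying the \emph{general} system (\ref{GL}) on each sub-interval with $v(T_{k+1})$ (bounded in $C^{2+\alpha}$ by the preceding step) serving as the carried-over terminal datum $v_T$ on $[T_k,T_{k+1}]$. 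Backward induction from $T$ yields a constant $C(T)$ with $V_*\le C(T)||\mu_0||_{-(2+\alpha)}$. Finally, the estimate extends from smooth $\mu_0$ to arbitrary $\mu_0\in C^{-(1+\alpha)}$ by mollification $\mu_0^n:=\mu_0*\rho_{1/n}$, the continuous embedding $C^{-(1+\alpha)}\hookrightarrow C^{-(2+\alpha)}$, and the stability afforded by Theorem \ref{generalGL}.

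The main obstacle is closing the coupling above: the HJB bound is a sup-bound while the FP bound is integral in time, so the direct closure works only for small $T-t_0$. The iteration over sub-intervals handles the general case but requires careful tracking of how the carried-over terminal data $v_T$ on each piece enters the generalized linearized system (\ref{GL}), so that the propagated constants remain controlled.
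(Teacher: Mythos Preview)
Your approach differs substantially from the paper's, and it has a real gap in the long-time iteration.

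The paper's argument is almost a one-liner: for $m_0\in L^1\cap\mathcal{P}(\Omega)$ it invokes Proposition 5.11 of [2], which already gives $\|v^{\epsilon}\|_{1,2+\alpha}\le C\|\mu_0\|_{-(2+\alpha)}$ for the Neumann approximation on $\Omega_\epsilon$ with $C$ independent of $\epsilon$, and then passes to the limit on compact subsets exactly as in Proposition~\ref{es3}. For general $m_0\in\mathcal{P}(\Omega)$ it approximates by $m_{0,n}\in L^1$ and uses the convergence $v_n\to v$ established in Theorem~\ref{generalGL}. No coupling needs to be closed here because the hard work (including the use of monotonicity) is hidden in the cited Neumann estimate.

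Your direct argument does obtain the two building blocks $V_*\le CM_*$ and $\|\mu(t)\|_{-(2+\alpha)}\le C\|\mu_0\|_{-(2+\alpha)}+C\int_{t_0}^t\|v(s,\cdot)\|_{2+\alpha}\,ds$, and these do close for $C^2(T-t_0)<1$. The problem is the extension to arbitrary $T-t_0$. In your backward induction on $[T_k,T_{k+1}]$, the terminal datum $\|v(T_{k+1},\cdot)\|_{2+\alpha}$ is controlled, by the inductive step, only in terms of $\|\mu(T_{k+1})\|_{-(2+\alpha)}$; but $\|\mu(T_{k+1})\|_{-(2+\alpha)}$ must be propagated \emph{forward} from $\mu_0$, and that forward bound involves $\int_{t_0}^{T_{k+1}}\|v\|_{2+\alpha}$, i.e., values of $v$ on the earlier subintervals that the backward induction has not yet reached. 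If you try to absorb as you go, the smallness requirement on the step size $\delta$ at stage $k$ becomes $C^2\delta(a_{k+1}+1)<\tfrac12$, where $a_{k+1}$ is the accumulated constant from later steps; since $a_k$ grows geometrically in $k$, the required $\delta$ shrinks, forcing more steps, which makes $a_k$ grow further. The iteration does not terminate with a uniform constant.

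What is missing from your scheme is exactly the ingredient that [2] uses to close the forward--backward loop globally: the Lasry--Lions monotonicity of $F$ and $G$ (Hypotheses (3), (4)). Monotonicity yields an energy-type identity bounding $\int m\langle H_{pp}Dv,Dv\rangle$ directly in terms of the data, without any short-time restriction, and this is what allows the $\|\mu_0\|_{-(2+\alpha)}$ estimate to hold on all of $[t_0,T]$. If you want a self-contained proof, you should replace the sub-interval iteration by that monotonicity argument; otherwise, the cleanest route is the paper's: quote the Neumann estimate and pass to the limit.
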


\begin{proof}
Let $t\in [0,T]$. 
\vspace{1mm}

\noindent
\textit{Step 1:} We will start with the case where $m_0\in L^1(\Omega)\cap\mathcal{P}(\Omega)$. Note that by Proposition 5.11 of [2], we have
$$||v^{\epsilon}||_{1,2+\alpha}\leq C||\mu_0||_{-(2+\alpha)}\;,\text{  in }\Omega_{\epsilon},$$
where the constant $C$ does not depend on $\epsilon$ and where $v^{\epsilon}$ comes from the solution of (\ref{epsilonGL}). In particular, by the convergence of $v^{\epsilon}$ to $v$, this implies that for every $K$ compact subset of $\Omega$
$$||v(t,\cdot )||_{C^{2+\alpha}(K)}\leq C||\mu_0||_{-(2+\alpha)}.$$
Hence,
$$\sup_{t\in [0,T]}||v(t,\cdot)||_{2+\alpha}\leq C||\mu_0||_{-(2+\alpha)},$$
which is what we wanted.
\vspace{2mm}

\noindent
\textit{Step 2:} Consider $m_0\in \mathcal{P}(\Omega)$. Then, by density, there exists a sequence $(m_{0,n})_{n\in\mathbb{N}}\in L^1(\Omega)$ such that $\bm{d_1}(m_{0,n},m_0)\rightarrow 0$. Now, the result in Step 1  implies
$$\sup_{t\in [0,T]}||v_n(t,\cdot)||_{2+\alpha}\leq C||\mu_0||_{-(2+\alpha)},$$
where $v_n$ comes from the solution to (\ref{GL}) related to $(0,m_{0,n})$ and $\mu_0$. The result follows from the convergence of $v_n$ to $v$ established in Theorem \ref{generalGL}.\end{proof}

\vspace{5mm}

\begin{prop} \label{diff2}
Assume Hypotheses 2.3. Then, the function $\frac{\delta U}{\delta m}$ is twice differentiable with respect to $y$, together with its first and second derivatives with respect to $x$. Moreover, the following estimate holds:
$$\left|\left|\frac{\delta U}{\delta m}(t,\cdot,m,\cdot)\right|\right|_{2+\alpha,2+\alpha}\leq C\;,$$
for some constant $C>0$.
\end{prop}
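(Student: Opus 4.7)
The plan is to mimic the differentiation scheme used in Proposition \ref{representation} and Proposition \ref{diff}, but now exploiting the sharper stability estimate of Lemma \ref{est}, which controls $\|v(t_0,\cdot;\mu_0)\|_{2+\alpha}$ by $\|\mu_0\|_{-(2+\alpha)}$ instead of $\|\mu_0\|_{-(1+\alpha)}$. This improvement is exactly what lets us differentiate once more in the $y$ variable at the distributional level, since $\partial_{y_j}\partial_{y_k}\delta_y$ lies in $C^{-(2+\alpha)}$ but not in $C^{-(1+\alpha)}$.

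Concretely, by the representation formula (\ref{repr}) and linearity of \textbf{(L)}, for $y\in\Omega$, $h\neq 0$ small, and $j,k\in\{1,\dots,d\}$,
$$\frac{\partial_{y_j}K(t_0,x,m_0,y+he_k) - \partial_{y_j}K(t_0,x,m_0,y)}{h} = v\bigl(t_0,x;-\Delta_{h,k}\partial_{y_j}\delta_y\bigr),$$
where $\Delta_{h,k}$ is the difference quotient in the $k$-th direction. Testing against any $\phi\in C^{2+\alpha}$ and using a Taylor expansion one checks that $-\Delta_{h,k}\partial_{y_j}\delta_y\to\partial_{y_j}\partial_{y_k}\delta_y$ in $C^{-(2+\alpha)}$ as $h\to 0$, and that $\|\partial_{y_j}\partial_{y_k}\delta_y\|_{-(2+\alpha)}$ is bounded uniformly in $y$. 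Applying Lemma \ref{est} and linearity of \textbf{(L)} then gives
$$\left\|v(t_0,\cdot;-\Delta_{h,k}\partial_{y_j}\delta_y)-v(t_0,\cdot;\partial_{y_j}\partial_{y_k}\delta_y)\right\|_{2+\alpha} \leq C\|-\Delta_{h,k}\partial_{y_j}\delta_y-\partial_{y_j}\partial_{y_k}\delta_y\|_{-(2+\alpha)} \xrightarrow{h\to 0} 0,$$
so $\partial_{y_j}\partial_{y_k}K(t_0,x,m_0,y) = v(t_0,x;\partial_{y_j}\partial_{y_k}\delta_y)$ exists and lies in $C^{2+\alpha}$ with respect to $x$, with a uniform $C^{2+\alpha}$ bound in $x$ by Lemma \ref{est} again.

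For the $\alpha$-Hölder continuity in $y$ of this second derivative, I would use linearity once more combined with Lemma \ref{est}: for $y_1,y_2\in\Omega$ and $\phi\in C^{2+\alpha}$,
$$\bigl\langle \partial_{y_j}\partial_{y_k}\delta_{y_1} - \partial_{y_j}\partial_{y_k}\delta_{y_2},\phi\bigr\rangle = \partial_{y_j}\partial_{y_k}\phi(y_1)-\partial_{y_j}\partial_{y_k}\phi(y_2) \leq \|\phi\|_{2+\alpha}|y_1-y_2|^{\alpha},$$
so $\|\partial_{y_j}\partial_{y_k}\delta_{y_1} - \partial_{y_j}\partial_{y_k}\delta_{y_2}\|_{-(2+\alpha)}\leq |y_1-y_2|^{\alpha}$. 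Applying Lemma \ref{est} to $v(t_0,\cdot;\partial_{y_j}\partial_{y_k}(\delta_{y_1}-\delta_{y_2}))$ yields the required Hölder seminorm bound in $y$, uniformly in $x$. Combining this with the first-order bounds already established in Proposition \ref{representation} gives the full $\|\cdot\|_{2+\alpha,2+\alpha}$ estimate. The only subtle point to verify is the distributional differentiation at second order: one must be careful that the difference-quotient convergence happens in the precise negative-Hölder topology $C^{-(2+\alpha)}$ in which Lemma \ref{est} is stated; this is where the regularity budget is tight, and without Lemma \ref{est} the argument would not close.
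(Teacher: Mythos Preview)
Your approach is essentially the same as the paper's: exploit Lemma \ref{est} and the linearity of \textbf{(L)} to differentiate $v(t_0,x;-\partial_{y_j}\delta_y)$ once more in $y$, then read off the $\alpha$-H\"older continuity from the estimate $\|\partial_{y_j}\partial_{y_k}\delta_{y_1}-\partial_{y_j}\partial_{y_k}\delta_{y_2}\|_{-(2+\alpha)}\leq |y_1-y_2|^{\alpha}$.

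There is, however, one genuine gap in the way you present it. You write the quantity $v(t_0,\cdot;\partial_{y_j}\partial_{y_k}\delta_y)$ and apply Lemma \ref{est} to the initial datum $-\Delta_{h,k}\partial_{y_j}\delta_y-\partial_{y_j}\partial_{y_k}\delta_y$. But the well-posedness of \textbf{(L)} (Theorem \ref{generalGL}) and Lemma \ref{est} are only stated for $\mu_0\in C^{-(1+\alpha)}$, and $\partial_{y_j}\partial_{y_k}\delta_y\in C^{-(2+\alpha)}\setminus C^{-(1+\alpha)}$. So as written, neither the object $v(t_0,\cdot;\partial_{y_j}\partial_{y_k}\delta_y)$ nor the displayed inequality is justified. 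The paper sidesteps this by comparing two difference quotients at parameters $h$ and $k$: the datum $\Delta_h^j(-\partial_{y_i}\delta_y)-\Delta_k^j(-\partial_{y_i}\delta_y)$ \emph{does} lie in $C^{-(1+\alpha)}$, so Lemma \ref{est} applies directly, and the mean value theorem gives the bound $C(|h|^{\alpha}+|k|^{\alpha})$, proving the ratios are Cauchy in $C^{2+\alpha}(\Omega)$. The limit is then the second $y$-derivative by definition, and only afterwards is it identified with (or denoted) $v(t_0,\cdot;\partial_{y_j}\partial_{y_k}\delta_y)$. Your argument can be repaired either by inserting this Cauchy step, or by first remarking that Lemma \ref{est} lets one extend the linear map $\mu_0\mapsto v(t_0,\cdot;\mu_0)$ by continuity from $C^{-(1+\alpha)}$ to $C^{-(2+\alpha)}$; either way, the point must be addressed explicitly.
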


\begin{proof}
The proof goes along the lines of Corollary 5.12 of [2]. Let $K$ be a compact subset of $\Omega$ with nonempty interior. By Proposition \ref{representation}, we already know that 
$$\partial_{y_i}\frac{\delta U}{\delta m}(t_0,x,m_0,y)=v(t_0,x;-\partial_{y_i}\delta_y).$$
We consider the ratio
$$R_{i,j}^h(x,y)=\frac{v(t_0,x;-\partial_{y_i}\delta_{y+he_j})-v(t_0,x;-\partial_{y_i}\delta_y)}{h}.$$
We will show that it is Cauchy with respect to $h$ and hence it converges. Indeed, let $h,k>0$ and $0\leq |l|\leq 2$. By uniqueness of a solution and the linearity of \textbf{(L)}, we have
\begin{equation} \label{a}
D_x^lR_{i,j}^h(x,y)-D_x^lR_{i,j}^k(x,y)= D_x^lv(t_0,x;\Delta_h^j(-\partial_{y_i}\delta_y)-\Delta_k^j(-\partial_{y_i}\delta_y))\; ,
\end{equation}
where $\Delta_h^j(-\partial_{y_i}\delta_y)=-\frac{1}{h}(\partial_{y_i}\delta_{y+he_j}-\partial_{y_i}\delta_y)$. Now, let $x\in K$. It follows from Lemma \ref{est} and the mean value theorem that
\begin{align*}
|D_x^lR_{i,j}^h(x,y)-D_x^lR_{i,j}^k(x,y)|& \leq C||\Delta_h^j(-\partial_{y_i}\delta_y)-\Delta_k^j(-\partial_{y_i}\delta_y)||_{-(2+\alpha)}\quad\quad\quad\quad\quad\quad\quad\\
 = C \sup_{||\phi||_{2+\alpha}\leq 1}& \left(\frac{\partial_{y_i}\phi(y+he_j)-\partial_{y_i}\phi(y)}{h}-\frac{\partial_{y_i}\phi(y+ke_j)-\partial_{y_i}\phi(y)}{k}\right)\\
& =C\sup_{||\phi||_{2+\alpha}\leq 1}\left(\partial^2_{y_iy_j}\phi(y_{\phi,h})-\partial^2_{y_iy_j}\phi(y_{\phi,k})\right)\\
& \leq C\sup_{||\phi||_{2+\alpha}\leq 1}|y_{\phi,h}-y_{\phi,k}|^{\alpha} \\
& \leq C(|h|^{\alpha}+|k|^{\alpha}),
\end{align*}
hence the ratio is Cauchy. We deduce that $\partial_{y_i}\frac{\delta U}{\delta m}(t_0,x,m_0,y)$ is differentiable with respect to $y$, as claimed.
\vspace{2mm}

\noindent
To prove the estimate we argue as follows. Combining (\ref{a}) and Lemma \ref{est} we discover 
\begin{equation}
  ||R_{i,j}^h(x,y)-R_{i,j}^k(x,y)||_{2+\alpha}  \leq C||\Delta_h^j(-\partial_{y_i}\delta_y)-\Delta_k^j(-\partial_{y_i}\delta_{y'})||_{-(2+\alpha)}.
\end{equation}
Now, we know that
$$\Delta_h^j(-\partial_{y_i}\delta_y)-\Delta_k^j(-\partial_{y_i}\delta_{y'})\rightarrow \partial_{y_j}\partial_{y_i}\delta_y-\partial_{y_j}\partial_{y_i}\delta_{y'}\quad\text{ in }C^{-(2+\alpha)}$$
and $D_x^lR_{i,j}^h(x,y)\rightarrow \partial^2_{y_iy_j}D_x^l\frac{\delta U}{\delta m}(t_0,x,m,y)$ for all $|l|\leq 2$, as $h\rightarrow 0$, thus
$$\left|\left|\partial^2_{y_iy_j}\frac{\delta U}{\delta m}(t_0,\cdot,m,y)-\partial^2_{y_iy_j}\frac{\delta U}{\delta m}(t_0,\cdot,m,y')\right|\right|_{2+\alpha}\leq C||\partial_{y_j}\partial_{y_i}\delta_y-\partial_{y_j}\partial_{y_i}\delta_{y'}||_{-(2+\alpha)}\leq C|y-y'|^{\alpha}.$$
The proof is complete. \end{proof}

\vspace{5mm}

\subsection{Further estimates}
We end this section by providing two extra regularity results that are used in the proof of the convergence problem (section 7). In particular, we will prove that $\frac{\delta U}{\delta m}$ is Lipschitz with respect to the measure, as well as a more general estimate, which is similar to (\ref{2}).

\begin{thm}\label{Lip dU}
Suppose Hypotheses 2.3 are satisfied. Then, $\frac{\delta U}{\delta m}$ is Lipschitz continuous with respect to the measure variable. More specifically, the following estimate holds
$$\sup_{t\in [t_0,T]}\sup_{m_1\neq m_2}\bm{d_1}(m_1,m_2)^{-1}\left|\left|\frac{\delta U}{\delta m}(t,\cdot,m_1,\cdot)-\frac{\delta U}{\delta m}(t,\cdot,m_2,\cdot)\right|\right|_{2+\alpha,1+\alpha}\leq C$$
\end{thm}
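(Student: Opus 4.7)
The plan is to fix $m, m' \in \mathcal{P}(\Omega)$, let $(u_i, m_i)$ ($i=1,2$) denote the \textbf{(MFG)} solutions of Theorem \ref{MFG sol} with initial data $m, m'$, and for a generic $\mu_0 \in C^{-(1+\alpha)}(\Omega)$ let $(v_i, \mu_i)$ be the solutions of \textbf{(L)} linearized around $(u_i, m_i)$ with the common initial measure $\mu_0$. Direct subtraction shows that $(\bar v, \bar\mu) := (v_1 - v_2, \mu_1 - \mu_2)$ satisfies a system of the form (\ref{GL}), linearized around $(u_1, m_1)$, with zero initial measure and with inhomogeneities
\[ h = (H_p(\cdot, Du_2) - H_p(\cdot, Du_1)) \cdot Dv_2 + \Bigl(\tfrac{\delta F}{\delta m}(\cdot, m_1(\cdot)) - \tfrac{\delta F}{\delta m}(\cdot, m_2(\cdot))\Bigr)(\mu_2), \]
\[ v_T = \Bigl(\tfrac{\delta G}{\delta m}(\cdot, m_1(T)) - \tfrac{\delta G}{\delta m}(\cdot, m_2(T))\Bigr)(\mu_2(T)), \]
together with the $L^1$-source $c = \mu_2(H_p(\cdot, Du_1) - H_p(\cdot, Du_2)) + m_1(H_{pp}(\cdot, Du_1) - H_{pp}(\cdot, Du_2))Dv_2 + (m_1 - m_2)H_{pp}(\cdot, Du_2)Dv_2$.

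Each piece of $h, v_T, c$ factors as a difference of coefficients times a term linear in $\mu_0$. The coefficient differences are controlled by $C\,\bm{d_1}(m, m')$ thanks to Lemma \ref{stability estimates} (for the $H_p, H_{pp}$-differences, via $\sup_t \|u_1 - u_2\|_{2+\alpha} \leq C\bm{d_1}(m,m')$), Hypotheses (3)--(4) combined with $\sup_t \bm{d_1}(m_1(t), m_2(t)) \leq C\bm{d_1}(m,m')$ (for the $\tfrac{\delta F}{\delta m}, \tfrac{\delta G}{\delta m}$-differences), and Proposition \ref{5.6.[2]} (for the $L^p$-bound on $m_1 - m_2$). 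The remaining factors $v_2, \mu_2, m_i$ are controlled by Theorem \ref{generalGL} applied to \textbf{(L)}, which yields bounds linear in $\|\mu_0\|_{-(1+\alpha)}$. Feeding these into the estimate (\ref{ESTIMATE}) produces
\begin{equation}\label{LipKey}
\sup_{t \in [t_0, T]} \|\bar v(t, \cdot)\|_{2+\alpha} \;\leq\; C \, \bm{d_1}(m, m') \cdot \|\mu_0\|_{-(1+\alpha)}.
\end{equation}

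By Propositions \ref{representation} and \ref{diff}, one has $\tfrac{\delta U}{\delta m}(t_0, x, m, y) - \tfrac{\delta U}{\delta m}(t_0, x, m', y) = \bar v(t_0, x)$ with $\mu_0 = \delta_y$, and since $\|\delta_y\|_{-(1+\alpha)} \leq 1$ uniformly in $y$, (\ref{LipKey}) delivers the $C^{2+\alpha}$-in-$x$ part of the norm. For the $C^{1+\alpha}$-in-$y$ part, Proposition \ref{representation} gives $\partial_{y_j}\tfrac{\delta U}{\delta m}(t_0, x, m, y) = v(t_0, x; -\partial_{y_j}\delta_y)$; instantiating (\ref{LipKey}) at $\mu_0 = -\partial_{y_j}\delta_y$ and then at $\mu_0 = -\partial_{y_j}(\delta_y - \delta_{y'})$ yields, respectively, the uniform bound and the $\alpha$-Hölder modulus in $y$ of $\partial_{y_j}(K_1 - K_2)$, each carrying a single factor of $\bm{d_1}(m,m')$. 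Here one uses $\|-\partial_{y_j}\delta_y\|_{-(1+\alpha)} \leq 1$ and $\|-\partial_{y_j}(\delta_y - \delta_{y'})\|_{-(1+\alpha)} \leq C|y - y'|^\alpha$, the latter following from the fact that $D\phi$ is $\alpha$-Hölder with seminorm bounded by $\|\phi\|_{1+\alpha}$.

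The main technical obstacle is the careful bookkeeping in the proof of (\ref{LipKey}): one must verify that every factor involving $v_i$ or $\mu_i$ in the source terms $h, v_T, c$ depends \emph{linearly} on $\|\mu_0\|_{-(1+\alpha)}$, so that the choice $\mu_0 = -\partial_{y_j}(\delta_y - \delta_{y'})$ transfers directly into the $|y-y'|^\alpha$ modulus. Simultaneously, every factor involving $u_1 - u_2$, $m_1 - m_2$, or the measure-differences of $\tfrac{\delta F}{\delta m}, \tfrac{\delta G}{\delta m}$ must contribute exactly one power of $\bm{d_1}(m,m')$ and no extra dependence on $\mu_0$; the Lipschitz hypotheses on $\tfrac{\delta F}{\delta m}, \tfrac{\delta G}{\delta m}$ in Hypotheses (3)--(4) together with Lemma \ref{stability estimates} and Proposition \ref{5.6.[2]} are exactly what make this bookkeeping close.
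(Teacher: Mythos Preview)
Your proposal is correct and follows essentially the same approach as the paper: subtract the two linearized systems, recognize the difference as a solution of (\ref{GL}) with zero initial measure and inhomogeneities controlled by $\bm{d_1}(m,m')\|\mu_0\|_{-(1+\alpha)}$, apply estimate (\ref{ESTIMATE}), and then read off the $C^{2+\alpha,1+\alpha}$-bound via the representation formula. Your write-up is in fact more explicit than the paper's, which defers the bookkeeping on $h,c,v_T$ to \cite{16} and simply says ``the result follows from Proposition \ref{representation}'' for the $y$-regularity, whereas you spell out the $\mu_0=\delta_y,\ -\partial_{y_j}\delta_y,\ -\partial_{y_j}(\delta_y-\delta_{y'})$ specializations.
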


\begin{proof}
Let $m_{01},m_{02}\in\mathcal{P}(\Omega)$ and $(u_1,m_1)$, $(u_2,m_2)$ be the solutions to \textbf{(MFG)} starting at $(t_0,m_{01})$, $(t_0,m_{02})$, respectively. Also, we consider $\mu_0\in C^{-(1+\alpha)}$ and, for $i\in \{1,2\}$, let $(v_i,\mu_i)$ be the solution to \textbf{(L)} associated with $(u_i,m_i)$ starting at $\mu_0$. Then, $(v,\mu):= (v_1-v_2,\mu_1-\mu_2)$ is the solution to (\ref{GL}) with 
\begin{align*}
&\mu_0 =0,\\
&h(t,x)=h_1(t,x)+h_2(t,x),\\
&h_1(t,x)=\frac{\delta F}{\delta m}(x,m_1(t))(\mu_2(t))-\frac{\delta F}{\delta m}(x,m_2(t))(\mu_2(t)),\\
&h_2(t,x)=(H_p(x,Du_1)-H_p(x,Du_2))\cdot v_2(t,x),\\
&c(t,x)=\mu_2(t)(H_p(x,Du_1)-H_p(x,Du_2))+[(m_1H_{pp}(x,Du_1)-m_2H_{pp}(x,Du_2)](t,x),\\
&v_T(x)=\frac{\delta G}{\delta m}(x,m_1(T))(\mu_2(T))-\frac{\delta G}{\delta m}(x,m_2(T))(\mu_2(T)),
\end{align*}
so by Theorem \ref{generalGL}
$$\sup_{t\in [t_0,T]}||v(t,\cdot)||_{2+\alpha} \leq C(||v_T||_{2+\alpha}+||h||_{0,\alpha}+||c||_{L^1})\;,$$
where $C$ is the constant from (\ref{5.1}). Now, using our Hypotheses on $F$, $G$, $H$ and Lemma
\ref{stability estimates}, we can estimate the terms on the right-hand side as in Theorem 3.3 in [16] to get
$$\sup_{t\in[t_0,T]}||v(t,\cdot)||_{2+\alpha}\leq C\bm{d_1}(m_{01},m_{02})||\mu_0||_{-(1+\alpha)}\;.$$
Since
$$v(t_0,x)=\int_{\Omega}\left(\frac{\delta U}{\delta m}(t_0,x,m_{01},y)-\frac{\delta U}{\delta m}(t_0,x,m_{02},y)\right)\mu_0(dy),$$
the result follows from Proposition \ref{representation}.
\end{proof}
\vspace{4mm}

\begin{lem} \label{general estimate}
If $m\in\mathcal{P}(\Omega)$ and $\phi\in L^2(m,\mathbb{R}^d)$ is a bounded vector field such that $\text{Im}(id+\phi)\subseteq \Omega$, then:
$$\left|\left|U(t,\cdot, (id+\phi)\#m)-U(t,\cdot,m)-\int_{\Omega}D_mU(t,\cdot,m,y)\cdot \phi(y)dm(y)\right|\right|_{1+\alpha}\leq C||\phi||^2_{L^2(m)}\;.$$
\end{lem}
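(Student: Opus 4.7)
The plan is to exploit the linearity of the functional derivative through a convex-interpolation argument and then convert the difference of pushforwards into an integral against $\phi$ via the fundamental theorem of calculus in $y$. Concretely, for $s\in[0,1]$ set $\nu_s:=(1-s)m+s(id+\phi)\#m\in\mathcal{P}(\Omega)$. Applying the $C^1$-character of $U(t,x,\cdot)$ along the straight path $s\mapsto\nu_s$ (Proposition~\ref{diff}) and the change-of-variables formula for the pushforward, I obtain
\begin{equation*}
U(t,x,(id+\phi)\#m)-U(t,x,m)=\int_0^1\!\int_\Omega\left[\frac{\delta U}{\delta m}(t,x,\nu_s,y+\phi(y))-\frac{\delta U}{\delta m}(t,x,\nu_s,y)\right]dm(y)\,ds.
\end{equation*}
Since $\frac{\delta U}{\delta m}(t,x,m,\cdot)\in C^{2+\alpha}(\Omega)$ uniformly by Proposition~\ref{diff2}, I would extend it to a neighbourhood of $\overline{\Omega}$ by a standard Whitney extension, and the fundamental theorem of calculus in $y$ (using $D_mU=D_y\frac{\delta U}{\delta m}$) yields
\begin{equation*}
U(t,x,(id+\phi)\#m)-U(t,x,m)=\int_0^1\!\int_0^1\!\int_\Omega D_mU(t,x,\nu_s,y+r\phi(y))\cdot\phi(y)\,dm(y)\,dr\,ds.
\end{equation*}

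Subtracting the linear term, the error
\begin{equation*}
E(x)=\int_0^1\!\int_0^1\!\int_\Omega\Big[D_mU(t,x,\nu_s,y+r\phi(y))-D_mU(t,x,m,y)\Big]\cdot\phi(y)\,dm(y)\,dr\,ds
\end{equation*}
splits naturally as $I_1+I_2$ via the intermediate term $D_mU(t,x,\nu_s,y)$, with $I_1:=D_mU(t,x,\nu_s,y+r\phi(y))-D_mU(t,x,\nu_s,y)$ and $I_2:=D_mU(t,x,\nu_s,y)-D_mU(t,x,m,y)$. Proposition~\ref{diff2} supplies a uniform $C^{2+\alpha}_x$ bound on $D_yD_mU$, so $\|I_1\|_{1+\alpha}\leq Cr|\phi(y)|$. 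Theorem~\ref{Lip dU}, read off one derivative in the $y$-argument, gives $\|I_2\|_{1+\alpha}\leq C\bm{d_1}(\nu_s,m)$; and a short calculation with $1$-Lipschitz test functions shows $\bm{d_1}(\nu_s,m)=s\bm{d_1}((id+\phi)\#m,m)\leq s\int_\Omega|\phi|\,dm\leq s\|\phi\|_{L^2(m)}$ (Cauchy--Schwarz and $m(\Omega)=1$).

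Assembling the bounds and integrating in $(r,s,y)$,
\begin{equation*}
\|E\|_{1+\alpha}\leq C\int_\Omega|\phi|^2\,dm+C\|\phi\|_{L^2(m)}\int_\Omega|\phi|\,dm\leq C\|\phi\|_{L^2(m)}^2,
\end{equation*}
where the second term is absorbed by one more application of Cauchy--Schwarz. The only technical subtlety I expect is the extension of $\frac{\delta U}{\delta m}(t,x,m,\cdot)$ off $\overline{\Omega}$, so that the Taylor expansion along $r\mapsto y+r\phi(y)$ remains meaningful when this segment exits the (possibly non-convex) domain; this is handled by a Whitney extension that preserves the uniform regularity bounds provided by Proposition~\ref{diff2} and Theorem~\ref{Lip dU}. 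Everything else is bookkeeping that combines the two key linearization estimates of Section~5.
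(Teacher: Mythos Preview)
Your proposal is correct and is precisely the argument the paper defers to: the paper's own proof is the one-line remark ``By Theorem~\ref{Lip dU}, the proof is an adaptation of Propositions~A.2.1 and~A.2.2 of~[5],'' and what you have written is exactly that adaptation---linear interpolation $\nu_s$, fundamental theorem of calculus in $y$, and the two-term splitting controlled respectively by Proposition~\ref{diff2} and Theorem~\ref{Lip dU}. Your observation about the Whitney extension (to cope with non-convexity of $\Omega$ when forming $y+r\phi(y)$) is a genuine detail that is absent from the torus setting of~[5] and that the paper does not spell out; it is the right fix and the uniform bounds of Proposition~\ref{diff2} survive the extension because the Whitney operator is linear and bounded on $C^{2+\alpha}$.
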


\begin{proof}
By Theorem \ref{Lip dU}, the proof is an adaptation of Propositions A.2.1 and A.2.2 of [5].
\end{proof}

\vspace{9mm}

\section{Solvability of the Master Equation}

\noindent
The results from the previous sections indicate that $U$ from (\ref{ME}) has the regularity to be the unique solution to the Master Equation.
\vspace{2mm}

\begin{defn}
 Let $0<\alpha<1$. A map $U\in\mathcal{C}^{1+\frac{\alpha}{2},2+\alpha,1}([0,T]\times \Omega\times\mathcal{P}(\Omega))$ is a \textit{classical solution} of the \textbf{ME} if:
 \vspace{1mm}
 
 \noindent
 (i) $\frac{δU}{δm}(t,x,m,y)$ is Lipschitz continuous in all the arguments and\\           
 \noindent
 \textcolor{white}{.....}$\frac{δU}{δm}(t,\cdot ,m,\cdot )\in C^{2+\alpha,2+\alpha}(\Omega\times\Omega)$ for every $(t,m)\in [0,T]\times\mathcal{P}(\Omega)$.
 \vspace{2mm}
 
 \noindent
 (ii) $U$ satisfies \textbf{ME} in the classical sense.
\end{defn}
\vspace{5mm}

\noindent
Before we verify that $U$ is classical solution to the \textbf{ME}, we will prove a lemma, which will be useful in the calculations.
\vspace{3mm}

\begin{lem} (Integration by parts)\label{int by parts}\\
Let $t_0\in [0,T)$, $h\in (0,T-t_0)$, $m_0\in L^1(\Omega)$ and $m$ be a solution to (\ref{FP}), where (\ref{PR}) is assumed near the boundary. Then, for any $\varphi \in C([0,T]; L^1(\Omega))\cap L^{\infty}([0,T]\times\Omega)$ such that 
$$\begin{cases}
-\partial_t\varphi -\text{tr}(a(x)D^2\varphi)+\alpha\cdot D\varphi\in L^{\infty}(Q_T)\; ,\\
\varphi(T)=\psi\in L^{\infty}(\Omega)
\end{cases}$$
in the sense of distributions, the following formula holds:
\begin{equation} \label{int1}
\int_{t_0}^{t_0+h}\int_{\Omega}m(-\partial_t\varphi -\text{tr}(a(x)D^2\varphi)+\alpha\cdot D\varphi)\;dxdt=\int_{\Omega}m_0\varphi (0)\;dx-\int_{\Omega}m(t_0+h)\varphi (t_0+h)\;dx\; .
\end{equation}
Furthermore, if we assume that $m\in C^{1+\frac{\alpha}{2},2+\alpha}$ and $\phi$ is Lipschitz continuous with respect to $t$, then 
\begin{equation}\label{int2}
    \int_{\Omega}m_0\left(-\text{tr}(a(x)D^2\varphi(t_0))+\alpha\cdot D\varphi (t_0)\right)\; dx=\int_{\Omega}\partial_t m(t_0)\varphi (t_0)\;dx\; .
\end{equation}
\end{lem}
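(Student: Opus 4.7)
The plan is to establish \ref{int1} as a direct consequence of the weak formulation in Definition 4.1, and then to obtain \ref{int2} by differentiating \ref{int1} at $h=0$.

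For \ref{int1}, I would observe that the restriction $\varphi|_{[t_0,t_0+h]}$ continues to lie in $C([t_0,t_0+h];L^1(\Omega))\cap L^{\infty}([t_0,t_0+h]\times\Omega)$, its terminal trace $\psi:=\varphi(t_0+h,\cdot)$ belongs to $L^{\infty}(\Omega)$, and the parabolic expression $-\partial_t\varphi-\text{tr}(a(x)D^2\varphi)+\alpha\cdot D\varphi$ remains bounded. Since $m(t_0)=m_0$ and $\alpha$ satisfies the invariance condition \ref{PR}, Definition 4.1 applied to the restricted $\varphi$ yields \ref{int1} directly, with the $\varphi(0)$ appearing in the statement read as the initial trace $\varphi(t_0,\cdot)$ of the restricted problem.

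For \ref{int2}, I would divide \ref{int1} by $h>0$ and pass to the limit $h\to 0^+$. Set $G(t,x):=-\partial_t\varphi(t,x)-\text{tr}(a(x)D^2\varphi(t,x))+\alpha(t,x)\cdot D\varphi(t,x)\in L^\infty(Q_T)$. By the continuity of $t\mapsto m(t,\cdot)$ in $L^1$ and the Lebesgue differentiation theorem, the $h$-quotient of the left-hand side of \ref{int1} converges to $\int_\Omega m_0\,G(t_0)\,dx$. For the right-hand side quotient, the natural move is to split
\[
m(t_0+h)\varphi(t_0+h)-m_0\varphi(t_0)=\bigl[m(t_0+h)-m_0\bigr]\varphi(t_0+h)+m_0\bigl[\varphi(t_0+h)-\varphi(t_0)\bigr],
\]
and treat the two pieces separately: the first piece, divided by $h$, tends to $\int_\Omega \partial_t m(t_0)\varphi(t_0)\,dx$ using the upgraded regularity $m\in C^{1+\alpha/2,2+\alpha}$ together with the boundedness of $\varphi$, while the second piece is controlled by dominated convergence (the Lipschitz constant of $\varphi$ in $t$ times $m_0\in L^1$ providing the integrable majorant) and produces $\int_\Omega m_0\,\partial_t\varphi(t_0)\,dx$ in the limit. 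Collecting these limits and canceling the common $\int_\Omega m_0\,\partial_t\varphi(t_0)\,dx$ term from both sides then yields \ref{int2}.

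The main obstacle I expect is precisely this second split piece: because $\varphi$ is only Lipschitz, hence only a.e.\ differentiable, in $t$, the convergence $\tfrac{\varphi(t_0+h,\cdot)-\varphi(t_0,\cdot)}{h}\to \partial_t\varphi(t_0,\cdot)$ cannot be asserted pointwise at every $t_0$ and must be read in the dominated-convergence sense using the uniform $t$-Lipschitz bound. A pleasant feature of proceeding through differentiation of \ref{int1}, rather than trying to integrate by parts directly in space at time $t_0$, is that no fresh boundary argument on $\partial\Omega$ is needed: the invariance condition \ref{PR} has already been absorbed into Definition 4.1 to license the weak formulation, so the delicate boundary contributions never re-emerge in the differentiation step, which is precisely what makes \ref{int2} a clean corollary of \ref{int1}.
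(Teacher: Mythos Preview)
Your proposal is correct and follows essentially the same strategy as the paper. For \eqref{int1} the paper obtains the identity by writing Definition~4.1 once on $[t_0,T]$ and once on $[t_0+h,T]$ and subtracting, whereas you apply Definition~4.1 directly with terminal time $t_0+h$; these are equivalent. For \eqref{int2} both arguments differentiate \eqref{int1} at $h=0$; the only difference is that the paper first integrates by parts in $t$ (using $m\in C^{1+\alpha/2,2+\alpha}$) to rewrite \eqref{int1} as
\[
\int_{t_0}^{t_0+h}\!\int_{\Omega}(\partial_t m)\varphi\,dx\,dt=\int_{t_0}^{t_0+h}\!\int_{\Omega}m\bigl(-\text{tr}(aD^2\varphi)+\alpha\cdot D\varphi\bigr)\,dx\,dt,
\]
and only then divides by $h$ and sends $h\to 0$. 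This preliminary integration by parts eliminates the boundary-in-time terms and the explicit difference quotient $\tfrac{\varphi(t_0+h)-\varphi(t_0)}{h}$ altogether, so the obstacle you flag about $\varphi$ being merely Lipschitz in $t$ simply does not appear. Your product-splitting route reaches the same conclusion but at the cost of justifying that extra limit; the paper's ordering is the cleaner way to execute the same idea.
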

\vspace{2mm}

\begin{proof}
By Defintion 4.1, we may write the equalities
\begin{equation*} 
\int_{t_0}^{T}\int_{\Omega}m(-\partial_t\varphi -\text{tr}(a(x)D^2\varphi)+\alpha\cdot D\varphi)\;dxdt=\int_{\Omega}m_0\varphi (t_0)\;dx-\int_{\Omega}m(T)\psi\;dx 
\end{equation*}
and
\begin{equation*} 
\int_{t_0+h}^{T}\int_{\Omega}m(-\partial_t\varphi -\text{tr}(a(x)D^2\varphi)+\alpha\cdot D\varphi)\;dxdt=\int_{\Omega}m(t_0+h)\varphi (t_0+h)\;dx-\int_{\Omega}m(T)\psi\;dx \; .
\end{equation*}
(\ref{int1}) follows after subtracting them.
\vspace{3mm}

\noindent
Now, suppose that $m$ is a classical solution of (\ref{FP}). Integrating by parts with respect to $t$ and using (\ref{int1}), we discover
\begin{equation*}
    \int_{t_0}^{t_0+h}\int_{\Omega}(\partial_t m) \varphi \text{ }dydt=\int_{t_0}^{t_0+h}\int_{\Omega}m(-\text{tr}(a(y)D^2\varphi)+\alpha\cdot D\varphi )\; dydt\; .
\end{equation*}
Dividing by $h$ and letting $h\rightarrow 0^+$, yields (\ref{int2}).
\end{proof}

\vspace{7mm}

\noindent
Now, we are ready to prove \textbf{Theorem} \ref{MEsol}.

\begin{proof}
The fact that $U(T,x,m)=G(x,m)$ follows from the definition of $U$. Let $m_0\in \mathcal{P}(\Omega)\cap L^1(\Omega)$ smooth with compact support and $t_0\in [0,T]$. We have
\begin{multline}\label{basic}
\frac{U(t_0+h,x,m_0)-U(t_0,x,m_0)}{h}=\frac{U(t_0+h,x,m_0)-U(t_0+h,x,m(t_0+h))}{h}\\
+\frac{U(t_0+h,x,m(t_0+h))-U(t_0,x,m_0)}{h}\;.
\end{multline}

\noindent
For the second term, we know
$$\lim_{h\rightarrow 0}\frac{U(t_0+h,m(t_0+h))-U(t_0,m_0)}{h}=\partial_t u(t_0,x).$$

\noindent
On the other hand, letting $m_{s,h}=(1-s)m_0+sm(t_0+h)$,

\begin{align*}
U(t_0+h,x,m(t_0+h))-& U(t_0+h,x,m_0)   \\
& =\int_0^1\int_{\Omega}\frac{\delta U}{\delta m}(t_0+h,x,m_{s,h},y)(m(t_0+h)-m_0)dyds
\\
& = \int_0^1\int_{\Omega}\int_{t_0}^{t_0+h}\frac{\delta U}{\delta m}(t_0+h,x,m_{s,h},y)\partial_t m\text{ }dtdyds \; .
\end{align*}

\noindent
Thus, since $\lim_{h\rightarrow 0}\frac{\delta U}{\delta m}(t_0+h,x,m_{h,s},y)=\frac{\delta U}{\delta m}(t_0,x,m_0,y)$ uniformly, we have
$$\lim_{h\rightarrow 0}\frac{U(t_0+h,x,m(t_0+h))-U(t_0+h,x,m_0)}{h}=\int_{\Omega}\frac{\delta U}{\delta m}(t_0,x,m_0,y)\partial_t m(t_0)\;dy\; .$$
However, since 
$$-\partial_t v-\text{tr}(a(x)D^2v)+H_p(x,Du)\cdot Dv\in L^{\infty}(Q_T),$$
$m\in C^{1+\frac{\alpha}{2},2+\alpha}$ (from Theorem \ref{MFG sol}) and $v$ is Lipschitz with respect to $t$ (from Remark 5.4), by virtue of Lemma \ref{int by parts}, we can write
\begin{multline*}
\int_{\Omega}m_0(-\text{div}(a(y)Dv(t_0,x;y))+(H_p(x,Du(t_0,y)+\tilde{b}(y))\cdot Dv(t_0,x;y))\; dy\\=\int_{\Omega} \partial_tm(t_0) v(t_0,x;y)dy\; ,
\end{multline*}
hence
\begin{multline*}
\lim_{h\rightarrow 0}\frac{U(t_0+h,x,m(t_0+h))-U(t_0+h,x,m_0)}{h}\\=
\int_{\Omega}m_0(-\text{tr}(a(y)D^2v(t_0,x;y))+H_p(x,Du(t_0,y))\cdot Dv(t_0,x;y))dy\; .
\end{multline*}

\noindent
From (\ref{basic}), we deduce that
\begin{multline*}
\lim_{h\rightarrow 0}\frac{U(t_0+h,x,m_0)-U(t_0,x,m_0)}{h}\\=\int_{\Omega}m_0(-\text{tr}(a(y)D^2v(t_0,x;y))+H_p(y,Du(t_0,y))\cdot Dv(t_0,x;y))\; dy-\partial_t u(t_0,x)\; .
\end{multline*}
Since $v(t_0,x;y)=\frac{\delta U}{\delta m}(t_0,x,m_0,y)$ (relation (5.16)), we conclude 
\begin{multline*}
\partial_t U(t_0,x,m_0)=\\\int_{\Omega}m_0\left(-\text{tr}(a(y)D_yD_mU(t_0,x,m_0,y))\right)\;dy+
\int_{\Omega}H_p(y,DU(t_0,y,m_0))\cdot D_mU(t_0,x,m_0,y)m_0\;dy\\
-\text{tr}(a(x)D^2_xU(t_0,x,m_0))+H(x,DU(t_0,x,m_0)-F(x,m_0).
\end{multline*}
The result for general $m_0\in \mathcal{P}(\Omega)$ follows from a standard density argument.  \end{proof}
\vspace{5mm}

\noindent
Finally, we will prove that $U$ is the \textbf{unique classical solution} to the \textbf{ME}. The idea is to derive \textbf{(MFG)} from a solution of the \textbf{ME}. To establish that, we would need the following lemma.
\vspace{4mm}

\begin{lem} \label{General FP}
Assume $a$, $H$ satisfy Hypotheses 1, 2 and 7 and let $m_0\in \mathcal{P}(\Omega)$ with density in $C^{2+\alpha}$ with compact support. Also, let $f:[t_0,T]\times\Omega\times \mathcal{P}(\Omega)\rightarrow \mathbb{R}$ be a bounded function which is of class $C^{1+\frac{\alpha}{2},1+\alpha}$ over spacetime and Lipschitz continuous with respect to the measure variable. Then, there exists a solution, in the sense of Definition 4.1, of the equation
$$\begin{cases}
\partial_tm-\text{div}(a(x)Dm)-\text{div}\left(m(H_p(x,f(t,x,m))+\tilde{b}(x))\right)=0\quad,\text{ in }(t_0,T)\times \Omega,\\
m(t_0)=m_0,
\end{cases}$$
where near the boundary we assume $(2.1)$. Furthermore, the solution $m$ is of class $ C^{1+\frac{\alpha}{2},2+\alpha}$.\end{lem}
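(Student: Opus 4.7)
The plan is to apply Schauder's fixed-point theorem, reducing the nonlinear equation to iterated solution of a linear Fokker--Planck equation. Given $\mu\in C([t_0,T];\mathcal{P}(\Omega))$, I will set
$$b_\mu(t,x):= H_p(x,f(t,x,\mu(t)))+\tilde{b}(x)$$
and let $\Phi(\mu)$ be the unique solution, in the sense of Definition 4.1, of
$$\partial_t m-\text{div}(a(x)Dm)-\text{div}(m\, b_\mu)=0,\qquad m(t_0)=m_0.$$
Since $f$ is bounded and $H_p$ is smooth, $b_\mu$ is uniformly bounded in $\mu$. Combining the invariance condition $(2.1)$ applied pointwise at $p=f(t,x,\mu(t))$ with the identity $\text{div}(aDd)=\text{tr}(aD^2d)+\tilde{b}\cdot Dd$ shows that $b_\mu$ satisfies the structural condition required in Definition 5.1 with a constant independent of $\mu$, so existence and uniqueness of the linear problem follow from Proposition 4.3 of [1] (as recalled in the discussion around (\ref{FP})).

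To extract parabolic regularity, I would choose $\epsilon$ small enough that $\text{spt}(m_0)\subset\Omega_\epsilon$ and solve the corresponding equation in $\Omega_\epsilon$ with Neumann boundary condition $(aDm^\epsilon+m^\epsilon b_\mu)\cdot\nu|_{\partial\Omega_\epsilon}=0$ (existence via Proposition 3.3 of [2]). The regularity of $f$ in spacetime gives $b_\mu\in C^{\alpha/2,\alpha}(\overline{Q_T})$, and since $m_0$ vanishes near $\partial\Omega_\epsilon$ the Neumann compatibility at $t=t_0$ is automatic. Theorem IV.5.3 of [4], combined with Remark 3.6, will then yield
$$\|m^\epsilon\|_{1+\alpha/2,2+\alpha}\le R\quad\text{in }\Omega_\epsilon,$$
with $R$ independent of $\epsilon$. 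Passing to the limit $\epsilon\to 0^+$ by Arzel\`a--Ascoli, as in Step 2 of Theorem \ref{MFG sol}, produces $m^\epsilon\to\Phi(\mu)$ locally uniformly in $C^{1,2}$, with $\Phi(\mu)\in C^{1+\alpha/2,2+\alpha}(\overline{Q_T})$ subject to the same bound.

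For the Schauder step, I will work on
$$K:=\{\nu\in C^{1+\alpha/2,2+\alpha}(\overline{Q_T}):\,\|\nu\|_{1+\alpha/2,2+\alpha}\le R,\;\nu(t_0,\cdot)=m_0,\;\nu\ge 0,\,\textstyle\int_\Omega\nu=1\},$$
a closed convex subset of $C([t_0,T];\mathcal{P}(\Omega))$ which is relatively compact there by Arzel\`a--Ascoli. The previous paragraph gives $\Phi(K)\subset K$. For continuity, if $\mu_n\to\mu$ in $C([t_0,T];\mathcal{P}(\Omega))$, the Lipschitz dependence of $f$ on the measure makes $b_{\mu_n}\to b_\mu$ uniformly on $\overline{Q_T}$, and the stability estimate for the linear Fokker--Planck equation (an analogue of Proposition \ref{FPf}) then yields $\Phi(\mu_n)\to\Phi(\mu)$ in $C([t_0,T];\mathcal{P}(\Omega))$. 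Schauder's theorem will then produce a fixed point $m=\Phi(m)$, which is the desired solution; its $C^{1+\alpha/2,2+\alpha}$ regularity has already been recorded.

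The main technical obstacle will be the uniform Schauder estimate on $m^\epsilon$: the constant $R$ must be independent of both the approximation parameter $\epsilon$ and of $\mu\in K$. Independence of $\epsilon$ is ensured by the geometric uniformity of $\Omega_\epsilon$ (Proposition \ref{domains} and Remark 3.6), while independence of $\mu$ reduces to a uniform $C^{\alpha/2,\alpha}$ bound on $b_\mu$ over $K$, which will follow from the boundedness and Lipschitz-in-measure regularity of $f$ together with the equicontinuity in the Wasserstein metric of the curves in $K$ coming from their uniform parabolic regularity.
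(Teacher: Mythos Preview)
Your overall strategy—Schauder's fixed point applied to the map $\mu\mapsto\Phi(\mu)$ solving the linear Fokker--Planck equation with drift $b_\mu$—is the same as the paper's. The gap is in your choice of the fixed-point domain $K$, which introduces a circular dependence you have not resolved. For $\Phi(K)\subset K$ you need the Schauder bound $R$ on $\|m^\epsilon\|_{1+\alpha/2,\,2+\alpha}$ to be uniform over $\mu\in K$; but the constant in Theorem IV.5.3 of [4] depends on $\|b_\mu\|_{\alpha/2,\alpha}$, whose time-H\"older part comes (through the Lipschitz dependence of $f$ on the measure) from the time-regularity of $t\mapsto\mu(t)$ in $\bm{d_1}$. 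The only control you have on that regularity is the very bound $R$ that defines $K$. So $R$ is determined by a quantity that itself depends on $R$, and there is no reason the resulting map $R\mapsto\tilde R(R)$ admits a finite fixed point. Your last paragraph acknowledges exactly this obstacle but the proposed resolution (``equicontinuity \ldots\ coming from their uniform parabolic regularity'') is the circularity restated, not removed.

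The paper breaks the loop by taking the fixed-point set to be the weaker
\[
X=\bigl\{m\in C([t_0,T];\mathcal{P}(\Omega)):\bm{d_1}(m(t),m(s))\le L|t-s|^{1/2}\bigr\},
\]
where the constant $L$ is supplied by Lemma \ref{3.2} and depends only on $\|b_\mu\|_\infty$. Since $f$ is bounded, this $L^\infty$ bound is genuinely uniform in $\mu$, with no reference to $L$ itself. Compactness and continuity of $\Phi$ on $X$ then yield the fixed point $m$; the $C^{1+\alpha/2,\,2+\alpha}$ regularity is obtained only \emph{afterwards} (the paper's Step 3), bootstrapping from the fact that the fixed point lies in $X$, so that $t\mapsto m(t)$ is H\"older in $\bm{d_1}$ and hence $b_m\in C^{\alpha/2,\alpha}$ with a now-determined constant.
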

\vspace{3mm}

\begin{proof}
We will use a fixed point argument. Let 
$$X=\{ m\in C([t_0,T];\mathcal{P}(\Omega)\text{ : }\bm{d_1}(m(t),m(s))\leq L|t-s|^{1/2}\}\;,$$
where $L$ is to be chosen later. Note that $X$ compact and convex. We define the map 
$$T:X\rightarrow X\text{ with } T(\mu)=m\; ,$$
where $m\in C([0,T];L^1(\Omega))$ is the unique weak solution, in the sense of Definition 4.1, to the equation 
\begin{equation} \label{xxxxx}
\begin{cases}
\partial_tm-\text{div}(a(x)Dm)-\text{div}\left(m(H_p(x,f(t,x,\mu))+\tilde{b}(x))\right)=0,\text{ in }(t_0,T)\times\Omega,\\
m(t_0)=m_0,
\end{cases}
\end{equation}
where near the boundary we assume $(2.1)$, as given by Theorem 4.5 of [1]. We separate the proof in three steps.
\vspace{2mm}

\noindent
\textit{Step 1:} We will show that $T$ is well defined. Indeed, consider the problem
\begin{equation} \label{zzzzz}
\begin{cases}
\partial_t m^{\epsilon}-\text{div}(a(x)Dm^{\epsilon})-\text{div}\left(m^{\epsilon}(H_p(x,f(t,x,\mu))+\tilde{b}(x))\right)=0,\text{ in }(t_0,T)\times \Omega_{\epsilon},\\
m^{\epsilon}(t_0)=m_0,\\
a(x)Dm^{\epsilon}+m^{\epsilon}\left(H_p(x,f(t,x,\mu))+\tilde{b}(x)\right)\cdot\nu(x)|_{\partial\Omega_{\epsilon}}=0.
\end{cases}
\end{equation}
By Proposition 4.3 of [1], since $H_p$ is Lipschitz and $f$ is bounded, there exists an $m\in C([t_0,T];L^1(\Omega))$ which solves (\ref{xxxxx}) in the sense of Definition 4.1, such that $m^{\epsilon}\rightarrow m$ in $C([t_0,T]; L^1(\Omega))$ as $\epsilon\rightarrow 0$. Imitating the argument in the proof of Proposition 3.3 of [2] (which is based on Lemma \ref{3.2}), we get that there exists a $C$, independent of $\epsilon$ and $L$, such that 
$$\bm{d_1}(m^{\epsilon}(t),m^{\epsilon}(s))\leq C|t-s|^{1/2},\text{ for any }t,s\in [t_0,T].$$
Now, let $\psi$ be a $1$-Lipschitz function defined in $\Omega$. We have
$$\int_{\Omega_{\epsilon}}\psi (m^{\epsilon}(t)-m^{\epsilon}(s))\leq \bm{d_1}(m^{\epsilon}(t),m^{\epsilon}(s))\leq C|t-s|^{1/2},$$
so passing to the limit as $\epsilon\rightarrow 0$
$$\int_{\Omega}\psi (m(t)-m(s))\leq C|t-s|^{1/2}.$$
Taking the supremum over $\psi$ yields
$$\bm{d_1}(m(t),m(s))\leq C|t-s|^{1/2},$$
hence, choosing $L=C$, we deduce that $m\in X$ and the map $T$ is well defined. Moreover, using a similar argument as in Proposition \ref{5.6.[2]} (relation (\ref{5.6.2})), we get that there exists a $\tilde{C}>0$, independent of $\epsilon$, such that 
$$||m^{\epsilon}||_{L^p}\leq \tilde{C},$$
so, by weak converegence, 
\begin{equation}\label{yyyyy}
||m||_{L^p(Q_T)}\leq \tilde{C}\;.
\end{equation}
\vspace{2mm}

\noindent
\textit{Step 2:} In order to apply Schauder's theorem and show that $T$ has a fixed point, it suffices to show that $T$ is continuous. For that, let $(\mu_n)_{n\in\mathbb{N}}$ be a sequence in $X$ such that $\mu_n\rightarrow \mu$. We will show that $m_n=T(\mu_n)\rightarrow T(\mu)=m$. We argue along the lines of Theorem \ref{MFG sol}. Fix $\psi\in W^{1,\infty}(\Omega)$ with $||\psi||_{W^{1,\infty}}\leq 1$ and $s\in (t_0,T]$. By Proposition 3.4 of [1], there exists $\phi^s_{\epsilon}$ that solves
$$\begin{cases}
-\partial_t \phi^s_{\epsilon}-\text{tr}(a(x)D^2\phi^s_{\epsilon})+H_p(x,Du_n)\cdot D\phi^s_{\epsilon}=0,\quad\text{ in } (t_0,s)\times \Omega_{\epsilon},\\
\phi^s_{\epsilon}(s)=\psi,\text{ in }\Omega,\\
a(x)D\phi^s_{\epsilon}\cdot\nu(x)|_{\partial\Omega_{\epsilon}}=0,
\end{cases}$$
such that $\phi^s_{\epsilon}\rightarrow \phi^s_n$ in $C([t_0,s]; L^p(\Omega))$ as $\epsilon\rightarrow 0$, where $\phi^s_n$ is the weak solution to the equation
$$\begin{cases}
    -\partial_t\phi_n^s-\text{tr}(a(x)D^2\phi_n^s )+H_p(x,f(t,x,\mu_n))\cdot D\phi_n^s=0,\text{ in }(t_0,s)\times\Omega,\\
    \phi_n (s)=\psi,\text{ in }\Omega,\\
    \end{cases}$$
under $(2.1)$. However, by Lemma \ref{3.2}, we have
$$|\phi^s_{\epsilon}(t,x)-\phi^s_{\epsilon}(w,y)|\leq C||\psi||_{W^{1,\infty}}(|t-w|^{1/2}+|x-y|),$$
where $C$ is independent of $\epsilon$, due to Proposition \ref{domains}. Thus, by Arzela-Ascoli,  $\phi^s_{\epsilon}\rightarrow \phi^s_n$ uniformly as $\epsilon\rightarrow 0$ and
\begin{equation}
    \sup_{t\in[t_0,s]}||\phi^s_n(t,\cdot)||_{W^{1,\infty}}\leq C||\psi||_{W^{1,\infty}}.
\end{equation}
Set $\phi^s_{n,k}:=\phi^s_n-\phi^s_k$. This function satisfies
$$\begin{cases}
-\partial_t \phi^s_{n,k}-\text{tr}(a(x)D^2\phi^s_{n,k})+H_p(x,f(t,x,\mu_n))\cdot D\phi^s_{n,k}=\\
\hspace{5cm}(H_p(x,f(t,x,\mu_k))-H_p(x,f(t,x,\mu_n)))\cdot D\phi_k^s,\text{ in }(t_0,T)\times\Omega,\\
\phi^s_{n,k}(s)=0,\text{ in }\Omega.
\end{cases}$$
By Proposition 3.4 of [1], once again, we know that $\phi_{n,k}^s$ can be obtained as a limit of $\phi_{n,k}^{s,\epsilon}$ solving the same equation in $\Omega_{\epsilon}$ with Neumann conditions. Therefore, by Lemma \ref{3.1}, we get that there exists a constant $C>0$, such that
\begin{equation}||\phi^s_{n,k}||_{\frac{1+\alpha}{2},1+\alpha}\leq C||(H_p(x,f(t,x,\mu_k))-H_p(x,f(t,x,\mu_n)))\cdot D\phi_k||_{\infty}\leq C\sup_{t\in [t_0,T]}\bm{d_1}(\mu_n(t),\mu_k(t)),\end{equation}
where in the last step we used (6.7) and the Lipschitz continuity of $H_p$ and $f$. Subtracting the weak formulations for $T(\mu_n)=m_n$ and $T(\mu_k)=m_k$ we thus get
$$\int_{\Omega}\psi(x)(m_n(s)-m_k(s))dx=\int_{\Omega}(\phi_n^s(t_0,x)-\phi_k^s(t_0,x))m_0dx\stackrel{(6.8)}{\leq} C\sup_{t\in [t_0,T]}\bm{d_1}(\mu_n(t),\mu_k(t))\;.$$
Therefore, taking supremum over $\psi$ and $s$,
$$\sup_{s\in [t_0,T]}\bm{d_1}(m_n(s),m_k(s))\leq C\sup_{t\in [t_0,T]}\bm{d_1}(\mu_n(t),\mu_k(t)).$$
We deduce that $(m_n)_{n\in\mathbb{N}}$ is Cauchy and hence it converges. In addition, due to (6.6), $(m_n)_{n\in\mathbb{N}}$ converges weakly in $L^p$, up to a subsequence, to the same limit. We conclude that $(m_n)_{n\in\mathbb{N}}$ converges to a solution of (\ref{xxxxx}), in the sense of Definition 4.1, and hence, by uniqueness, it converges to $m=T(\mu)$.
\vspace{2mm}

\noindent
\textit{Step 3:} We finally prove that $m$, the fixed point of $T$, is in $C^{1+\frac{\alpha}{2},2+\alpha}$. Indeed, since $m\in X$, we know that $f(\cdot,x,m(\cdot))$ is at least of class $C^{\frac{\alpha}{2}}$. It follows as in Theorem 4.4 (Step 2), that $m^{\epsilon}$, solving (6.5) for $\mu=m$, is of class  $C^{1+\frac{\alpha}{2},2+\alpha}(Q^{\epsilon}_T)$ with a uniform in $\epsilon$ $C^{1+\frac{\alpha}{2},2+\alpha}$-bound. Therefore, by Arzela-Ascoli, $m^{\epsilon}\rightarrow m$ locally uniformly in $C^{1,2}$ and $m\in C^{1+\frac{\alpha}{2},2+\alpha}$. The proof is complete.
\end{proof}

\vspace{5mm}

\noindent
We are now ready to prove the uniqueness (\textbf{Theorem} 1.2):

\vspace{3mm}

\begin{proof}

Suppose that $m_0$ is smooth with compact support. Let $V$ be another classical solution of \textbf{ME} and let $\tilde{m}\in C^{1+\frac{\alpha}{2},2+\alpha}$ be a solution to the problem
$$\begin{cases}
\partial_tm-\text{div}(a(x)Dm)-\text{div}\left(m(H_p(x,D_xV(t,x,m))+\tilde{b}(x))\right)=0\quad ,\text{ in }[t_0,T]\times\Omega,\\
m(t_0)=m_0,
\end{cases}$$
given by Lemma \ref{General FP}.

\noindent
Set $\tilde{u}(t,x)=V(t,x,\tilde{m}(t))$. Using the chain rule and Lemma \ref{int by parts}, we can do the following calculation
\begin{align*}
\partial_t\tilde{u}(t,x)&=\partial_t V(t,x,\tilde{m}(t))+\int_{\Omega} \frac{δV}{δm}(t,x,\tilde{m}(t),\cdot )\partial_t \tilde{m}(t)\;dy\\
&=\partial_t V(t,x,\tilde{m}(t))+\int_{\Omega}\text{tr}(a(y)D_yD_mV(t,x,\tilde{m}(t),y))\tilde{m}(t)\;dy
\end{align*}
$$\quad\quad\quad\quad\quad\quad\quad\quad\quad\quad-\int_{\Omega}D_mV(t,x,\tilde{m}(t),y)\cdot H_p(y,D_xV(t,y,\tilde{m}))\tilde{m}(t)\; dy\;.$$
\vspace{1mm}

\noindent
Recalling that $V$ satisfies the \textbf{ME}, we find
$$\partial_t \tilde{u}(t,x)=-\text{tr}(a(x)D^2_x V(t,x,\tilde{m}(t)))+H(x,D_xV(t,x,\tilde{m}(t))-F(x,\tilde{m}(t))$$
$$=-\text{tr}(a(x)D^2\tilde{u}(t,x))+H(x,D\tilde{u}(t,x))-F(x,\tilde{m}(t)).$$
We conclude that $(\tilde{u},\tilde{m})$ solves, in the sense of Proposition \ref{MFGL1}, the \textbf{(MFG)} system with initial conditions $(t_0,m_0)$. By uniqueness (Lemma \ref{stability estimates}), we deduce that $u=\tilde{u}$ and $m=\tilde{m}$, hence $U(t_0,\cdot,m_0)=V(t_0,\cdot,m_0)$ for every $m_0$ smooth with compact support. It follows, by a density argument, that $U=V$. The proof is complete.
\end{proof}

\vspace{7mm}

\section{The Convergence Problem}

\noindent
In this section, we establish a connection between the \textbf{ME} and the asymptotic behaviour of the $N-$player differential game as $N\rightarrow +\infty$.

\vspace{4mm}

\subsection{The Nash System}
Before we proceed with the main results we need to establish the well-posedness of the Nash system (\ref{NS}) over $\Omega^N$, where we assume (\ref{PR}) near the boundary. Namely,

\vspace{3mm}

\begin{thm}
Assume Hypotheses 2.3. Then, the following system, $1\leq i\leq N$, has a unique classical solution.
\begin{equation}\label{NS1}
\begin{cases}
-\partial_t v_i^{N,\epsilon}-\sum_j \text{tr}(a(x_j)D^2_{x_jx_j}v_i^{N,\epsilon}(t,\bm{x}))+H(x_i,D_{x_i}v_i^{N,\epsilon}(t,\bm{x}))\\
\quad\quad\quad\quad+\sum_{j\neq i}H_p(x_j,D_{x_j}v_j^N(t,\bm{x}))\cdot D_{x_j}v_i^{N,\epsilon}(t,\bm{x})=F(x_i,m_{\bm{x}}^{N,i}),\text{ in }[0,T]\times \Omega^N,\\
v_i^{N,\epsilon}(T,\bm{x})=G(x_i,m_{\bm{x}}^{N,i})\quad\text{ in }\Omega^N.
\end{cases}
\end{equation}
\end{thm}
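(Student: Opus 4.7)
The plan is to adapt the strategy used for the \textbf{(MFG)} system in [1] to the Nash system, using the well-posedness of the Neumann version established by Ricciardi in [16] as the fundamental building block. For existence, I would first consider, for each $\epsilon \in (0, \epsilon_0/3)$, the approximating Nash system obtained by restricting (\ref{NS1}) to $[0,T]\times \Omega_\epsilon^N$ and complementing it with Neumann conditions $a(x_j) D_{x_j} v_i^{N,\epsilon} \cdot \nu(x_j)|_{x_j \in \partial\Omega_\epsilon} = 0$ for all $1\leq i,j\leq N$ (together with a Neumann-compatible modification of the terminal data, in the spirit of Proposition \ref{MFGL1} and Proposition \ref{extension}). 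By the results of [16], this approximating system admits a unique classical solution $(v_i^{N,\epsilon})_{i=1}^N \in C^{1+\frac{\alpha}{2},2+\alpha}([0,T]\times \Omega_\epsilon^N)^N$.

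The next step is to establish uniform-in-$\epsilon$ bounds for $(v_i^{N,\epsilon})_i$ over compact subsets of $\Omega^N$. An $L^\infty$ bound follows from the maximum principle combined with the bounds on $F$ and $G$ in Hypotheses (3)--(4). The gradient bound will be obtained by the Bernstein method applied to $|D_{x_j} v_i^{N,\epsilon}|^2$: the invariance condition (\ref{PR}) applied coordinate-wise produces, near each face $\{x_j \in \partial\Omega_\epsilon\}$, the good term $a(x_j) Dd(x_j) \cdot Dd(x_j)/d(x_j)$, which should dominate the couplings $\sum_{k\neq i} H_p(x_k, D_{x_k} v_k^{N,\epsilon}) \cdot D_{x_k} v_i^{N,\epsilon}$ after a suitable penalization, mirroring Proposition 4.4 of [1]. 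Parabolic Schauder theory (Theorem IV.5.3 of [4]) then bootstraps this to a uniform $C^{1+\frac{\alpha}{2}, 2+\alpha}$ bound on compact subsets of $\Omega^N$, and passing to the limit $\epsilon \to 0$ via Arzela-Ascoli yields a classical solution of (\ref{NS1}) on $[0,T]\times \Omega^N$.

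For uniqueness, I would use the standard Lasry-Lions monotonicity argument adapted to the Nash system: given two classical solutions $(v_i^N), (\tilde v_i^N)$ of (\ref{NS1}), consider the associated optimal processes $\bm{Y}, \widetilde{\bm{Y}}$ solving (\ref{system1}) with drifts $-H_p(\cdot, D_{x_i} v_i^N)$ and $-H_p(\cdot, D_{x_i} \tilde v_i^N)$ respectively. Applying It\^o's formula to $v_i^N(t, \bm{Y}_t) - \tilde v_i^N(t, \widetilde{\bm{Y}}_t)$ and summing over $i$, the uniform convexity of $H$ in $p$ (Hypothesis (2)) produces a non-negative quadratic form in the differences $D_{x_i}(v_i^N - \tilde v_i^N)$, while the monotonicity of $F$ and $G$ (Hypotheses (3)--(4)) eliminates the remaining cross terms, forcing $v_i^N = \tilde v_i^N$ for each $i$. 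The invariance condition (\ref{PR}), via Proposition 2.5 of [1] applied coordinate-wise, ensures that $\bm{Y}_t, \widetilde{\bm{Y}}_t \in \Omega^N$ almost surely, so no boundary terms appear in the It\^o calculation.

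The main obstacle I expect is the uniform-in-$\epsilon$ gradient bound. In contrast to the scalar \textbf{(MFG)} case, the Nash system contains $N-1$ off-diagonal coupling terms $H_p(x_j, D_{x_j} v_j^{N,\epsilon}) \cdot D_{x_j} v_i^{N,\epsilon}$ which lack a definite sign; showing that the good boundary term from (\ref{PR}) dominates all of these simultaneously near each coordinate face $\{x_j \in \partial\Omega_\epsilon\}$ requires an auxiliary function incorporating barriers built from $d_\Omega(x_j)$ for each coordinate, together with careful bookkeeping of the Bernstein identity for coupled systems in the style of [16].
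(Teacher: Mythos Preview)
Your overall approximation strategy---solve the Nash system on $\Omega_\epsilon^N$ with Neumann conditions and a compatibility-corrected terminal datum, obtain estimates uniform in $\epsilon$, then pass to the limit by Arzel\`a--Ascoli---matches the paper's exactly. The differences are in how the uniform estimates and uniqueness are obtained, and here you are working much harder than necessary.

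For the uniform $C^{1+\frac{\alpha}{2},2+\alpha}$ bound, the paper does \emph{not} use the invariance condition (\ref{PR}) at all: it simply invokes the classical parabolic Schauder theory for quasilinear systems with oblique boundary conditions (Theorem~V.7.4 and the remark after Theorem~VII.7.1 in [4]), which directly yields $\|v_i^{N,\epsilon}\|_{C^{1+\frac{\alpha}{2},2+\alpha}(\Omega_\epsilon)}\le C$ with $C$ depending on $N$ but not on $\epsilon$. The $\epsilon$-independence of the constant follows from Proposition~\ref{domains} and Remark~3.4, exactly as elsewhere in the paper. So your ``main obstacle''---a Bernstein argument for the coupled system exploiting the barrier $a(x_j)Dd(x_j)\cdot Dd(x_j)/d(x_j)$ coordinate by coordinate---is never needed; the invariance condition only enters implicitly, to justify that the limit problem on $\Omega^N$ needs no boundary condition.

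For uniqueness, the paper again avoids any probabilistic or Lasry--Lions argument and cites the PDE uniqueness for quasilinear parabolic systems (Theorem~V.6.1 in [4]). Your proposal to run It\^o on $v_i^N(t,\bm{Y}_t)-\tilde v_i^N(t,\widetilde{\bm{Y}}_t)$ along two \emph{different} optimal trajectories is not the standard Lasry--Lions computation and, as written, would not close: the cross terms you obtain from two distinct processes do not match the structure that monotonicity of $F,G$ controls. A correct monotonicity proof for the $N$-player Nash system would require a more delicate setup; the paper sidesteps all this by using classical PDE uniqueness.
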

\vspace{1mm}

\begin{proof}
Indeed, consider the Nash system for the $N-$player game with Neumann conditions over $\Omega_{\epsilon}$. It takes the form
$$\begin{cases}
-\partial_t v_i^{N,\epsilon}-\sum_j \text{tr}(a(x_j)D^2_{x_jx_j}v_i^{N,\epsilon}(t,\bm{x}))+H(x_i,D_{x_i}v_i^{N,\epsilon}(t,\bm{x}))\\
\quad+\sum_{j\neq i}H_p(x_j,D_{x_j}v_j^N(t,\bm{x}))\cdot D_{x_j}v_i^{N,\epsilon}(t,\bm{x})=F(x_i,m_{\bm{x}}^{N,i}),\text{ in }(0,T)\times\Omega_{\epsilon}^N,\\
v_i^{N,\epsilon}(T,\bm{x})=G(x_i,m_{\bm{x}}^{N,i})-g_{\epsilon,i}(\bm{x}),\\
a(x_j)D_{x_j}v_i^{N,\epsilon}(t,\bm{x})\cdot \nu (x_j)|_{x_j\in\partial\Omega_{\epsilon}}=0,\text{ for all }j\in \{1,2,...,N\}
\end{cases},\; 1\leq i\leq N,$$
where $g_{\epsilon,i}(\bm{x})=\mathcal{N}_{\epsilon}(G(\cdot,m^{N,i}_{\cdot})(\bm{x})$ ensures that the compatibility conditions hold.
By results from [4], this system has a unique solution in $C^{1,2}(\overline{Q_T^{\epsilon}})$ and, in particular, this solution satisfies a uniform $C^{1+\frac{\alpha}{2},2+\alpha}$ estimate (Theorem V 7.4 in [4] and remark after Theorem VII 7.1 in [4])
$$||v_i^{N,\epsilon}||_{C^{1+\frac{\alpha}{2},2+\alpha}}\leq C,\text{ in }\Omega_{\epsilon},$$
where $C$ is independent of $\epsilon$, but depends on $N$. Therefore, by Arzela-Ascoli, there exists a subsequence such that
$$v_i^{N,\epsilon}\rightarrow v_i^N\text{ locally uniformly in }C^{1,2}\text{ as }\epsilon\rightarrow 0.$$
By passing to the limit, it is easy to see that $v_i^{N}$, $i=1,2,...,N$, is a classical solution to (\ref{NS1}). Uniqueness follows using the same technique as in Theorem V 6.1 in [4].
\end{proof}

\vspace{2mm}

\subsection{Finite dimensional projections and their properties}
For $N\geq 2$ and $1\leq i\leq N$, we define 
$$u^N_i(t,\bm{x}):=U(t,x_i,m_{\bm{x}}^{N,i}),$$
where $m_{\bm{x}}^{N,i}$ is the empirical distribution of the players $j\neq i$. From the regularity properties of $U$, we know that $u^N_i(t,x)\in C^{1,2+\alpha}$. The following proposition illustrates the connection between the derivatives of $u^N_i(t,x)$ and the derivatives of $U$.
\vspace{2mm}
\begin{prop}\label{4}
For all $j\neq i$, the following formulas hold:
\begin{align*}
& D_{x_j}u^N_i(t,\bm{x})=\frac{1}{N-1}D_mU(t,x_i,m_{\bm{x}}^{N,i},x_j),\\
& D^2_{x_i,x_j}u^N_i(t,\bm{x})=\frac{1}{N-1}D_xD_mU(t,x_i,m_{\bm{x}}^{N,i},x_j),\\
& \left| D^2_{x_j,x_j}u^N_i(t,\bm{x})-\frac{1}{N-1}D_yD_mU(t,x_i,m_{\bm{x}}^{N,i},x_j)\right|\leq \frac{C}{N^2}.
\end{align*}
\end{prop}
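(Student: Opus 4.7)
The plan is to establish all three identities by a finite-dimensional chain-rule argument, exploiting the definition of the linear functional derivative $\frac{\delta U}{\delta m}$ together with the Wasserstein-Lipschitz regularity of $\frac{\delta U}{\delta m}$ from Theorem~\ref{Lip dU} to control the dependence of $m_{\bm{x}}^{N,i}$ on the positions $x_j$.

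For the first identity, fix $k\in\{1,\dots,d\}$ and denote by $\bm{x}^h$ the configuration obtained from $\bm{x}$ by replacing $x_j$ with $x_j+he_k$. Since
$$m_{\bm{x}^h}^{N,i}-m_{\bm{x}}^{N,i}=\frac{1}{N-1}(\delta_{x_j+he_k}-\delta_{x_j}),$$
the definition of $\frac{\delta U}{\delta m}$ applied along the segment $m_s:=(1-s)m_{\bm{x}}^{N,i}+s m_{\bm{x}^h}^{N,i}$ yields
$$u_i^N(t,\bm{x}^h)-u_i^N(t,\bm{x})=\frac{1}{N-1}\int_0^1\left[\frac{\delta U}{\delta m}(t,x_i,m_s,x_j+he_k)-\frac{\delta U}{\delta m}(t,x_i,m_s,x_j)\right]ds.$$
Because $\frac{\delta U}{\delta m}(t,x_i,m,\cdot)$ is $C^{1+\alpha}$ uniformly in $m$ by Proposition~\ref{representation}, the bracket equals $h\,D_m U(t,x_i,m_s,x_j)\cdot e_k+o(h)$; dividing by $h$ and letting $h\to 0$ produces the first formula. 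The second formula then follows immediately by differentiating in $x_i$: the measure $m_{\bm{x}}^{N,i}$ is independent of $x_i$, and the $C^{2+\alpha,2+\alpha}$ regularity of $\frac{\delta U}{\delta m}$ from Proposition~\ref{diff2} permits the interchange of $D_x$ and $D_y$.

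The main content is the third identity. Differentiating the first formula in $x_j$ would nominally demand a second-order intrinsic derivative of $U$ in $m$, which the paper does not construct; to bypass this, set
$$f(h):=D_m U(t,x_i,m_{\bm{x}^h}^{N,i},x_j+he_k),$$
so that $(N-1)\,D^2_{x_j,x_j}u_i^N(t,\bm{x})\,e_k=\lim_{h\to 0}\frac{f(h)-f(0)}{h}$, and split
\begin{align*}
\frac{f(h)-f(0)}{h}&=\frac{D_m U(t,x_i,m_{\bm{x}^h}^{N,i},x_j+he_k)-D_m U(t,x_i,m_{\bm{x}^h}^{N,i},x_j)}{h}\\
&\quad+\frac{D_m U(t,x_i,m_{\bm{x}^h}^{N,i},x_j)-D_m U(t,x_i,m_{\bm{x}}^{N,i},x_j)}{h}.
\end{align*}
The first piece converges to $D_y D_m U(t,x_i,m_{\bm{x}}^{N,i},x_j)\,e_k$ by the differentiability of $D_m U$ in $y$ (Proposition~\ref{diff2}) together with its continuity in $m$. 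For the second piece I would invoke Theorem~\ref{Lip dU} and the elementary Wasserstein bound $\bm{d_1}(m_{\bm{x}^h}^{N,i},m_{\bm{x}}^{N,i})\leq \frac{h}{N-1}$ to conclude that its magnitude is uniformly at most $\frac{C}{N-1}$. Dividing by $N-1$ after passing to the limit then yields
$$\left|D^2_{x_j,x_j}u_i^N(t,\bm{x})-\frac{1}{N-1}D_y D_m U(t,x_i,m_{\bm{x}}^{N,i},x_j)\right|\leq \frac{C}{(N-1)^2}\leq \frac{C}{N^2}.$$

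The obstacle to anticipate is conceptual rather than computational: one must recognize that the Lipschitz regularity of $\frac{\delta U}{\delta m}$ in $\bm{d_1}$ (Theorem~\ref{Lip dU}), combined with the $\frac{1}{N-1}$ scaling of $\bm{d_1}(m_{\bm{x}^h}^{N,i},m_{\bm{x}}^{N,i})$, supplies precisely the quantitative input that a second-order measure calculus would otherwise provide, and it is exactly this combination that accounts for the $1/N^2$ rate in the third bound.
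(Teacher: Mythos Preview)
Your argument is correct and is essentially the same as the one the paper defers to: the paper's own proof consists of the single sentence ``Having the relations in Theorem~\ref{Lip dU} in our disposal, the proof is a trivial adaptation of the proof of Proposition 4.1 of [16]'', and what you have written is exactly that adaptation, with the Wasserstein-Lipschitz control from Theorem~\ref{Lip dU} replacing a second measure derivative in the treatment of the $D^2_{x_j,x_j}$ term. One small point worth tightening: your decomposition for the third bound shows only that every subsequential limit of $\tfrac{f(h)-f(0)}{h}$ lies within $\tfrac{C}{N-1}$ of $D_yD_mU\,e_k$, not that the limit exists; you are implicitly leaning on the paper's prior assertion that $u_i^N\in C^{1,2+\alpha}$ to know $D^2_{x_j,x_j}u_i^N$ is well defined, and it would be cleaner to say so explicitly.
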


\begin{proof}
Having the relations in Theorem \ref{Lip dU} in our disposal, the proof is a trivial adaptation of the proof of Proposition 4.1 of [16].
\end{proof}

\vspace{4mm}

\noindent
Now, we are ready to prove that $u^N_i(t,x)$, $i=1,2,...,N$ is ``almost'' a solution to the Nash system (\ref{NS1}), through its connection to the \textbf{ME}.

\begin{thm}
Assume Hypotheses 2.3 hold. Then, $u^N_i(t,x)$ satisfies the following equation :
$$\begin{cases}
-\partial_t u_i^{N,\epsilon}-\sum_j \text{tr}(a(x_j)D^2_{x_jx_j}u_i^{N,\epsilon}(t,\bm{x}))+H(x_i,D_{x_i}u_i^{N,\epsilon}(t,\bm{x}))\\
\quad\quad\quad\quad+\sum_{j\neq i}H_p(x_j,D_{x_j}u_j^N(t,\bm{x}))\cdot D_{x_j}u_i^{N,\epsilon}(t,\bm{x})=F(x_i,m_{\bm{x}}^{N,i})+r_i^N(t,x),\\
\quad\quad\quad\quad\quad\quad\quad\quad\quad\quad\quad\quad\quad\quad\quad\quad\quad\quad\quad\quad\quad\quad\quad\quad\quad\quad\quad\quad\text{in }[0,T]\times \Omega^N,\\
u_i^{N,\epsilon}(T,\bm{x})=G(x_i,m_{\bm{x}}^{N,i})\quad\text{ in }\Omega^N,
\end{cases}$$
where $r_i^N\in L^{\infty}$ with $\left|\left|r_i^N\right|\right|_{\infty}\leq \frac{C}{N}$.
\end{thm}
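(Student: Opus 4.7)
The strategy is to compute each term appearing in the Nash-type operator applied to $u_i^N$ by means of the chain rule, rewrite the outcome in terms of derivatives of $U$ using Proposition \ref{4}, and then invoke the Master Equation satisfied by $U$ at the pair $(x_i, m_{\bm{x}}^{N,i})$ to cancel the leading terms. The terminal condition $u_i^N(T,\bm{x})=G(x_i,m_{\bm{x}}^{N,i})$ is immediate from the definition of $u_i^N$ and the terminal condition for $U$, so the real content is the PDE itself.

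First I would record the three identities
\begin{align*}
\partial_t u_i^N(t,\bm{x}) &= \partial_t U(t,x_i,m_{\bm{x}}^{N,i}),\\
D_{x_i}u_i^N(t,\bm{x}) &= D_xU(t,x_i,m_{\bm{x}}^{N,i}), \quad D^2_{x_ix_i}u_i^N(t,\bm{x}) = D^2_xU(t,x_i,m_{\bm{x}}^{N,i}),
\end{align*}
together with the three formulas from Proposition \ref{4} for $j\neq i$. Substituting these into the Nash operator applied to $u_i^N$, and isolating $\partial_tU(t,x_i,m_{\bm{x}}^{N,i})$ via the Master Equation, the terms $-\mathrm{tr}(a(x_i)D^2_xU)$ and $H(x_i,D_xU)$ cancel immediately. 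The remaining task is to show that the two sums over $j\neq i$ that appear are close, respectively, to the two integrals against $m_{\bm{x}}^{N,i}$ in the Master Equation.

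For the diffusion sum, Proposition \ref{4} gives
\[
\sum_{j\neq i}\mathrm{tr}\bigl(a(x_j)D^2_{x_jx_j}u_i^N\bigr) = \frac{1}{N-1}\sum_{j\neq i}\mathrm{tr}\bigl(a(x_j)D_yD_mU(t,x_i,m_{\bm{x}}^{N,i},x_j)\bigr)+O(1/N),
\]
where the error comes from $(N-1)$ terms each of size $O(1/N^2)$. The first term on the right equals exactly $\int_{\Omega}\mathrm{tr}(a(y)D_yD_mU(t,x_i,m_{\bm{x}}^{N,i},y))\,dm_{\bm{x}}^{N,i}(y)$, which is the corresponding term in the Master Equation. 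For the drift sum, one writes
\[
\sum_{j\neq i}H_p(x_j,D_{x_j}u_j^N)\cdot D_{x_j}u_i^N = \frac{1}{N-1}\sum_{j\neq i}H_p(x_j,D_xU(t,x_j,m_{\bm{x}}^{N,j}))\cdot D_mU(t,x_i,m_{\bm{x}}^{N,i},x_j),
\]
and compares it with $\int_{\Omega}D_mU(t,x_i,m_{\bm{x}}^{N,i},y)\cdot H_p(y,D_xU(t,y,m_{\bm{x}}^{N,i}))\,dm_{\bm{x}}^{N,i}(y)$. These two expressions differ only in the measure argument of $D_xU$: the first uses $m_{\bm{x}}^{N,j}$, the second $m_{\bm{x}}^{N,i}$. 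Since $\bm{d_1}(m_{\bm{x}}^{N,i},m_{\bm{x}}^{N,j})\leq \tfrac{\mathrm{diam}(\Omega)}{N-1}$ and $D_xU$ is Lipschitz in the measure variable (Theorem \ref{Lip dU}), combined with the Lipschitz continuity of $H_p$ and the uniform bound on $D_mU$ from Proposition \ref{diff2}, each summand is perturbed by $O(1/N^2)$, so the total error is again $O(1/N)$.

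Finally, the term $F(x_i,m_{\bm{x}}^{N,i})$ that appears in the Master Equation lines up exactly with the source term of the Nash system. Collecting everything yields
\[
r_i^N(t,\bm{x})\;=\;\bigl(\text{diffusion error}\bigr)+\bigl(\text{drift error}\bigr),\qquad \|r_i^N\|_\infty\leq \frac{C}{N},
\]
with $C$ depending only on the constants in Hypotheses 2.3 (through the bounds on $D_xU,D_mU,D_yD_mU$ and the Lipschitz constants of $H_p$ and $m\mapsto D_xU$). The main obstacle is bookkeeping: one has to be sure that every $1/N^2$ residual really is absorbed after summing $N-1$ times, and in particular that the asymmetry $m_{\bm{x}}^{N,j}$ versus $m_{\bm{x}}^{N,i}$ in the drift term only contributes at the $1/N^2$ level per index. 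Both of these rely crucially on the uniform Lipschitz estimate of Theorem \ref{Lip dU}, which is why the $C^1$ character of $U$ in the measure variable is what makes the whole argument go through.
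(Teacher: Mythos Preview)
Your proposal is correct and follows essentially the same approach as the paper: evaluate the Master Equation at $(t,x_i,m_{\bm{x}}^{N,i})$, convert the two integrals against $m_{\bm{x}}^{N,i}$ into finite sums via Proposition~\ref{4}, and collect the residuals. You actually supply more detail than the paper's proof, which simply writes out the ME at the empirical measure and says ``using the derivative formulas from Proposition~\ref{4} \ldots\ the result follows''; in particular, you make explicit the $m_{\bm{x}}^{N,j}$ versus $m_{\bm{x}}^{N,i}$ discrepancy in the drift term and bound it via $\bm{d_1}(m_{\bm{x}}^{N,i},m_{\bm{x}}^{N,j})\leq \mathrm{diam}(\Omega)/(N-1)$, which the paper leaves implicit. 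One minor point: the Lipschitz continuity of $m\mapsto D_xU(t,\cdot,m)$ that you invoke there is contained in Lemma~\ref{stability estimates} rather than Theorem~\ref{Lip dU} (the latter concerns $\frac{\delta U}{\delta m}$).
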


\begin{proof}
Since $u^N_i(t,x):=U(t,x_i,m_{\bm{x}}^{N,i})$, we evaluate the ME -satisfied by $U$- at $(t,x_i,m_x^{N,i})$ and we find
$$-\partial_tu^N_i(t,x)-\text{tr}(a(x_i)D^2_{x_i,x_i}u^N_i)+H(x_i,D_{x_i}u_i^N)-\int_{\Omega}\text{tr}(a(y)D_yD_mU(t,x_i,m_x^{N,i},y))dm_x^{N,i}(y)$$
$$+\frac{1}{N-1}\sum_{j\neq i} H_p(x_j,D_xU(t,x_j,m_x^{N,i}))\cdot D_m U(t,x_i,m_x^{N,i},x_j)=F(x_i,m_x^{N,i}).$$
Using the derivative formulas from Proposition \ref{4} on the fifth and the third term from the left-hand side, the result follows.
\end{proof}

\vspace{7mm}

\subsection{Convergence Results}

\noindent
Let $v_i^N$, $1\leq i\leq N$, be the solution to (\ref{NS1}) for the $N$-player differential game. By definition and the symmetry of (\ref{NS1}), the functions $u_i^n$ and $v_i^n$ are symmetrical, that is, there exist two functions $U^N$ and $V^N: \Omega\times \Omega^{N-1}\rightarrow \mathbb{R}$, which are invariant under all permutations of the last $N-1$ coordinates and 
$$u_i^N(t,\bm{x})=U^N(x_i,(x_1,...x_{i-1},x_{i+1},...,x_N)),$$
$$v_i^N(t,\bm{x})=V^N(x_i,(x_1,...x_{i-1},x_{i+1},...,x_N)).$$
Let us fix $t_0\in [0,T)$, $m_0\in\mathcal{P}(\Omega)$ and $\bm{Z}=(Z^i)_i$ an i.i.d family of $N$ random variables of law $m_0$. The convergence results rely on the comparison of $v_i^N$ and $u_i^N$ along the trajectories defined by the following systems of SDEs:
$$\begin{cases}
dY^i_t=-H_p(Y^i_t,D_{x_i}v_i^N(t,\bm{Y}_t))dt+\sqrt{2}\sigma(Y^i_t)dB^i_t,\\
Y^i_{t_0}=Z_i
\end{cases}$$
and
$$\begin{cases}
dX^i_t=-H_p(X^i_t,D_{x_i}u_i^N(t,\bm{X}_t))dt+\sqrt{2}\sigma(X^i_t)dB^i_t,\\
Y^i_{t_0}=Z_i.
\end{cases}$$
The following theorem is true.
\vspace{4mm}

\begin{thm}
Suppose Hypotheses 2.3 are satisfied. Then, for any $1\leq i\leq N$, we have
$$\mathbb{E}\left[ \int_{t_0}^T|D_{x_i}v_i^N(t,\bm{Y}_t)-D_{x_i}u_i^N(t,\bm{Y}_t)|^2dt\right]\leq \frac{C}{N^2}.$$
Moreover, $\mathbb{P}$-a.s,
$$|u_i^N(t_0,\bm{Z})-v_i^N(t_0,\bm{Z})|\leq \frac{C}{N}.$$
\end{thm}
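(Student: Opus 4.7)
The strategy is to apply It\^o's formula to $w_i^N := u_i^N - v_i^N$ along the Nash-system trajectories $\bm{Y}_t$ and exploit the uniform convexity of $H$ in $p$ (Hypothesis~(2)) together with the $O(1/N)$ off-diagonal gradient decay of Proposition~\ref{4}. The scheme is that of [5, Ch.~6], adapted to bounded domains as in [16] and [9]; condition~(\ref{PR}) ensures $Y^j_t \in \Omega$, so no boundary terms arise in the It\^o expansion.

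With $dY^j_t = -H_p(Y^j_t, D_{x_j} v_j^N)\,dt + \sqrt{2}\sigma(Y^j_t)\,dB^j_t$, It\^o combined with the Nash system for $v_i^N$ and the perturbed Nash system of the previous theorem (with residual $r_i^N$ of size $C/N$) for $u_i^N$ rearranges into
\[
dw_i^N(t, \bm{Y}_t) = \bigl[\mathrm{conv}_i + \mathrm{cross}_i - r_i^N\bigr]\,dt + dM_i,
\]
where the diagonal convexity gap $\mathrm{conv}_i := H(D_{x_i} u_i^N) - H(D_{x_i} v_i^N) - H_p(D_{x_i} v_i^N)\cdot D_{x_i} w_i^N$ satisfies $\mathrm{conv}_i \geq c |D_{x_i} w_i^N|^2$ (by Hypothesis~(2) and uniform gradient bounds on $u_i^N, v_i^N$), and the cross term $\mathrm{cross}_i := \sum_{j \neq i}[H_p(D_{x_j} u_j^N) - H_p(D_{x_j} v_j^N)] \cdot D_{x_j} u_i^N$ contains only factors $D_{x_j} u_i^N$ ($j \neq i$) of order $1/N$ by Proposition~\ref{4}. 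Since $w_i^N(T, \cdot) \equiv 0$ (common terminal $G$), integrating gives the identity $-w_i^N(t_0, \bm{z}) = \mathbb{E}_{\bm{z}}\!\int_{t_0}^T [\mathrm{conv}_i + \mathrm{cross}_i - r_i^N]\,dt$.

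For the $L^2$ gradient bound, sum this identity over $i$, swap the order of summation in the cross term to write $\sum_i \mathrm{cross}_i = \sum_j [H_p(D_{x_j} u_j^N) - H_p(D_{x_j} v_j^N)] \cdot S_j^N$ with $S_j^N := \sum_{i \neq j} D_{x_j} u_i^N$ uniformly bounded (Proposition~\ref{4}), and use the exchangeability of the iid $\bm{Z}$ (which makes $\mathbb{E}[w_i^N(t_0, \bm{Z})]$ independent of $i$). Cauchy-Schwarz together with a Young's inequality tuned to the $1/N$ off-diagonal scaling absorbs $|\sum_i \mathrm{cross}_i|$ into a fraction of the convexity budget $c \sum_i |D_{x_i} w_i^N|^2$, closing the estimate and yielding the per-$i$ bound $\mathbb{E}\!\int |D_{x_i} w_i^N|^2\,dt \leq C/N^2$. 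The almost-sure pointwise bound then follows by returning to the identity at $\bm{z} = \bm{Z}(\omega)$: $|r_i^N| \leq C/N$ directly, $\mathbb{E}_{\bm{z}}\!\int |\mathrm{cross}_i|\,dt \leq C/N$ by Cauchy-Schwarz against the gradient estimate, and the $\mathrm{conv}_i$ contribution is $O(1/N^2)$ via an It\^o on $(w_i^N)^2$ together with the same gradient bound.

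The main technical obstacle is the closure of the summed gradient estimate: even after index-swapping, the cross term leaves an apparent $O(N)$ residual under a naive Young's application, and balancing it against the convexity budget demands the exact $1/N$ off-diagonal scaling of Proposition~\ref{4} (which itself rests on Theorem~\ref{Lip dU} and Proposition~\ref{diff2}), a carefully calibrated Young's parameter, and the exchangeability provided by the iid law of $\bm{Z}$. A secondary subtlety is the upgrade of the $L^2$ gradient bound from expectation over $\bm{Z}$ to a $\bm{Z}$-a.s.\ pathwise estimate, needed to justify the pointwise bound; this can be handled by a short bootstrap between the sup and $L^2$ controls. Monotonicity of $F$ and $G$, central in earlier sections, plays no direct role here since those terms cancel in the equation for $w_i^N$; what matters instead is the interior regularity of $U$ and the sharpness of the off-diagonal gradient decay it provides.
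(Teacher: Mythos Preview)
Your scheme has a genuine gap at the step where you ``close'' the summed gradient estimate. From the linear It\^o identity you write
\[
-w_i^N(t_0,\bm{z})=\mathbb{E}_{\bm{z}}\!\int_{t_0}^T\bigl[\mathrm{conv}_i+\mathrm{cross}_i-r_i^N\bigr]\,dt,
\]
and after summing over $i$ the left-hand side is $-\sum_i w_i^N(t_0,\bm{z})$, a quantity of size $O(N)$ with no sign. Exchangeability of the iid $\bm{Z}$ makes $\mathbb{E}[w_i^N(t_0,\bm{Z})]$ independent of $i$, but it does not make it small: you would need $|\mathbb{E}[w_1^N(t_0,\bm{Z})]|\le C/N^2$ to get anything useful out of the sum, and that is strictly stronger than the pointwise bound you are trying to prove. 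Likewise, after the index swap your cross term satisfies $|\sum_i\mathrm{cross}_i|\le C\sum_j|D_{x_j}w_j^N|$, and any Young's inequality applied here produces an additive $O(N)$ residual. The upshot is that the linear identity can only deliver $\sum_i\mathbb{E}\!\int|D_{x_i}w_i^N|^2\le CN$, i.e.\ a trivial $O(1)$ bound per $i$, not the required $C/N^2$.

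The paper's argument (following [5]) instead applies It\^o to $(w_i^N)^2$. This is the missing idea: the coercive gradient term then comes from the \emph{quadratic variation} $2\sum_j a(Y^j_t)D_{x_j}w_i^N\!\cdot\! D_{x_j}w_i^N\ge 2\lambda\sum_j|D_{x_j}w_i^N|^2$, which controls the full Hessian-level gradient (all $j$, not just the diagonal $j=i$ that your $\mathrm{conv}_i$ sees). The left-hand side is now $(w_i^N(t_0,\bm{z}))^2\ge 0$, so there is no uncontrolled $O(N)$ term after summing. On the right, every drift contribution appears multiplied by $w_i^N$, so Young's inequality manufactures $|w_i^N|^2$ terms that Gr\"onwall handles, together with an $O(1/N^2)$ residual from $r_i^N$. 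Summing over $i$ and applying Gr\"onwall gives $\sup_t\sum_i\mathbb{E}^{\bm{Z}}[|w_i^N(t)|^2]+\sum_i\mathbb{E}^{\bm{Z}}\!\int|D_{x_i}w_i^N|^2\le C/N$; feeding this back into the single-$i$ inequality yields the $C/N^2$ bound. Convexity of $H$ plays no essential role here---the mechanism is ellipticity of $a$ plus Gr\"onwall, not the lower bound on $H_{pp}$.
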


\begin{proof}
The proof goes along the lines of the proof of Theorem 6.2.1 in [5], so we only provide a sketch. We define the processes 
$$U^{N,i}_t:=u_i^N(t,\bm{Y}_t)\text{ and }V^{N,i}_t:=v_i^N(t,\bm{Y}_t)$$
with the notation $U^{N,i,j}_t=D_{x_j}U^{N,i}_t$ and $V^{N,i,j}=D_{x_j}V^{N,i}_t$. We use Itô's formula to the process $(U_t^{N,i}-V_t^{N,i})^2$. Integrating and using Grönwall's lemma gives
$$\sup_{t\in[t_0,T]}\left[ \sum_{i=1}^N\mathbb{E}^{\bm{Z}}[|U_t^{N,i}-V_t^{N,i}|^2]\right]+\sum_{i=1}^N\mathbb{E}^{\bm{Z}}\left[\int_0^T|DU_t^{N,i,i}-DV_t^{N,i,i}|^2dt\right]\leq \frac{C}{N}$$
and
\begin{equation} \label{3}
    \sup_{t\in [t_0,T]}\mathbb{E}^Z[|U_t^{N,i}-V_t^{N,i}|^2]+\mathbb{E}^Z\left[ \int_t^T|DU_s^{N,i,i}-DV_s^{N,i,i}|^2ds\right]\leq \frac{C}{N^2}.
\end{equation}
Evaluating the term in the sup at $t=t_0$, we prove the second inequality. Taking the integral at $t_0$, proves the first inequality.
\end{proof}
\vspace{3mm}

\noindent
Using these inequalities, the proof of Theorem 1.3 is a trivial adaptation of the proof of Theorem 2.4.8 in [5].

\vspace{4mm}

\noindent
Finally, we prove Theorem 1.4, which concerns the convergence of the trajectories. 
\vspace{2mm}

\begin{proof}
Note that
\begin{multline*}
    (X^i_t-Y^i_t)^2\\
    \leq \left( \int_0^t(H_p(s,X^i_s,D_{x_i}u^{N,i}(s,\bm{X}_s))-H_p(s,Y^i_s,D_{x_i}v^{N,i}(s,\bm{Y}_s)))ds\right)^2\\
    +\left( \int_0^t\sqrt{2}(\sigma(X^i_s)-\sigma(Y^i_s))dB_s^i\right)^2.
\end{multline*}
Since $H_p$ is Lipschitz and $U$ is regular, using Proposition \ref{4} we have
\begin{multline*}
    (X^i_t-Y^i_t)^2\leq C\left[ \int_0^t\left(|X^i_s-Y^i_s|+\frac{1}{N}\sum_{j\neq i}|X^j_s-Y^j_s|\right)ds\right]^2 \\
    +C\left[ \int_0^t(H_p(s,Y^i_s,D_{x_i}u^{N,i}(s,\bm{Y}_s))-H_p(s,Y^i_s,D_{x_i}v^{N,i}(s,\bm{Y}_s)))ds \right] ^2\\
    +\left( \int_0^t\sqrt{2}(\sigma(X^i_s)-\sigma(Y^i_s))dB_s^i\right)^2.
\end{multline*}
We now take the conditional expectation over $\bm{Z}$.
\begin{multline}
    \mathbb{E}^Z[(X^i_t-Y^i_t)^2]\\
    \leq C\int_0^t\left(\mathbb{E}^Z[|X^i_s-Y^i_s|^2]+\frac{1}{N}\sum_{j\neq i}\mathbb{E}^Z[|X^j_s-Y^j_s|^2]\right)ds\\
    +\mathbb{E}^Z\left[ \int_0^T|DU^{N,i,i}_s-DV_s^{N,i,i}|^2ds\right]+\mathbb{E}^Z\left[ \left( \int_0^t\sqrt{2}(\sigma(X^i_s)-\sigma(Y^i_s))dB_s^i\right)^2  \right].
\end{multline}
From the Itô isometry and the Lipschitz assumption on $\sigma$, we get
\begin{align*}
     \mathbb{E}^Z\left[ \left( \int_0^t\sqrt{2}(\sigma(X^i_s)-\sigma(Y^i_s))dB_s^i \right) ^2  \right]&=\mathbb{E}^Z\left[\int_0^t 2(\sigma(X^i_s)-\sigma(Y^i_s))^2ds]\right]\\
    &\leq C\int_0^t\mathbb{E}^Z[(X^i_s-Y^i_s)^2]ds.
\end{align*}
We now plug this in $(7.3)$, sum over $i=1,2,...,N$ and use $(7.2)$. An application of Grönwall's lemma implies
$$\sup_{t\in [0,T]}\sum_{i=1}^N\mathbb{E}^Z[(X^i_t-Y^i_t)^2]\leq \frac{C}{N}.$$
Inserting this in $(7.3)$ and using $(7.2)$ once again, Grönwall's lemma implies the result.
\end{proof}

\addtocontents{toc}{\protect\setcounter{tocdepth}{0}}

\section*{Acknowledgements}  
\noindent
The author wishes to thank Professor P.E. Souganidis for valuable discussions, comments, and suggestions. During the course of this work, the author was partially supported by P.E. Souganidis’ NSF grant DMS-1900599, ONR grant N000141712095 and AFOSR grant FA9550-18-1-0494. The author would like to thank the Institute for Mathematical and Statistical Innovation for its hospitality.

\addtocontents{toc}{\protect\setcounter{tocdepth}{5}}

\end{document}